\tikzset{node distance=2cm, auto}
\theoremstyle{plain} 
\newtheorem{thm}{Theorem}[section]
\newtheorem{lem}[thm]{Lemma}
\newtheorem{cor}[thm]{Corollary}
\theoremstyle{definition}
\newtheorem{dfn}[thm]{Definition}
\newtheorem{exam}[thm]{Example}
\newtheorem{rem}[thm]{Remark}
\def\int{\mathrm{int}}
\def\S4{\mathrm{S4}}
\begin{document}

\title{Implication via Spacetime}

\author[]{Amirhossein Akbar Tabatabai\footnote{Support by the Netherlands Organisation for Scientific Research under grant 639.073.807 is gratefully acknowledged.}}

\affil[]{Department of Philosophy, Utrecht University \texttt{amir.akbar@gmail.com}}

\date{ }

\maketitle

\begin{abstract}
In this paper we intend to study implications in their most general form, generalizing different classes of implications including the Heyting implication, sub-structural implications and weak strict implications. Following the topological interpretation of the intuitionistic logic, we will introduce non-commutative spacetimes to provide a more dynamic and subjective interpretation of an intuitionistic proposition. These combinations of space and time are natural sources for well-behaved implications and we will show that their spatio-temporal implications represent any other reasonable abstract implication. Then to provide a faithful well-behaved syntax for abstract implications, we will develop a logical system for the non-commutative spacetimes for which we will present both topological and Kripke semantics. These logics unify sub-structural and sub-intuionistic logics by embracing them as their special fragments. 
\end{abstract}

\section{Introduction}
\begin{quote}
\textit{I remember that back in 1980, as an undergraduate, I was disappointed in logic, and was thinking of shifting to topology. Then Van Dalen came along and gave a course at the University of Amsterdam on sheaves and their relation to logic (the first such course in Holland), and subsequently organised a stimulating seminar on the subject. A course of lectures on Kripke-Joyal semantics by Michael Fourman formed part of this seminar. I was immediately fascinated by the subject, and still am.} \cite{Moer}
\end{quote}

Replacing topology with algebraic geometry and categorical logic with Brouwer's liberating revolution, I can hardly imagine a more vivid explanation of Mohammad Ardeshir's eye-opening influence on my life, both academic and personal, than what Ieke Moerdijk is drawing in Dirk van Dalen festschrift. Through Ardehsir's fascinating explanation of the intutionistic philosophy and its huge impact on the everyday practice of mathematics, I found the realm of constructive mathematics and its \textit{implications} haunting and hence decided to leave not only my possible future in algebraic geometry but the whole discipline of everyday mathematics, altogether. However, a true revolution knows no border and Brouwer's was no exception. Starting from the second half of the last century, the anti-realistic interpretation of mathematics has emerged unexpectedly and as a technical inevitable necessity in the mainstream mathematics, first in algebraic geometry through Alexander Grothendieck's inexhaustible quest for the generalized space and then in higher geometry, homotopy theory and the so-called homotopical mathematics. Following this historical thread, my fascination for intuitionism and more specifically the intuitionistic implication, is now slowly bringing me back to algebraic geometry again, where intuitionism might play its most deserved technical role. In this introduction I intend to explain how such a seemingly unrelated notion of space can be useful to understand intuitionism and hence intuitionistic implications. Far better, I will explain how intuitionism and geometry, interpreted in its most general sense, are nothing but the two sides of the same coin.\\

To establish this connection, we have to first understand the spatial interpretation of the notion of construction. For that purpose, let us start with the easier notion of constructibility rather than the explicit constructions, themselves. This means that we are interested in propositions and the provability relation between them rather than the actual proofs. Let us start with the creative subject's mind that may have many possible \textit{states}. These states may encode many different data including the knowledge that she possesses in that mental state. It generally consists of all the constructions to which she has some reasonable access. For an intuitionist, a proposition is simply an entity that in every state of the creative subject's mind, it possesses a truth value and if the proposition happens to be true at some point, it must be possible to verify this truth in a finite number of steps. The truth value checks whether the proposition is derivable from the knowledge in a given state or not. Interpreting the knowledge as the story that has been told, a true proposition is exactly what the story can imply. Note that the finite verifiability condition is different from the decidability of a proposition in a mental state. For instance, let the knowledge content of a mental state be the axioms of Peano arithmetic. Then if something is not derivable from this theory, there is no a priory way to verify that.\\

The key point in the connection between intuitionism and topology is the set of these finitely verifiable propositions. This set has exactly the structure of the open subsets of a topological space and conversely, for any topological space, the set of its open subsets can be interpreted as the set of finitely verifiable propositions in a given theory.\footnote{Technically, this holds for a pointfree version of topological spaces that are called locales. However, for the sake of simplicity, in this introduction we limit ourselves only to topological spaces.} To explain how to interpret the set of all finitely verifiable propositions as the open subsets of a topological space, let us explain the three main structures that this set possess. Let $S$ be the set of all possible mental states. Then a proposition can be identified by a subset of $S$, consisting of all the mental states for which the proposition holds. First, note that these subsets are ordered by the partial order $A \vdash B$ that encodes the situation that the truth of $A$ in any state implies the truth of $B$ in the same state. The second structure is the finite meets of the poset, called conjunctions. The reason is that if both $A$ and $B$ are finitely verifiable propositions, then so is $A \wedge B$. Because, if $A \wedge B$ holds in a state, there are finite verifications for both of them and the combination of these verifications is also finite. Note that the same claim is not necessarily true for infinite conjunctions, because, if the infinite conjunction is true, we need possibly infinite number of verifications that may exceed any possible finite memory. The last and the third structure is the arbitrary joins called disjunctions. For some set $I$, if $A_i$ is finitely verifiable for any $i \in I$, then so is $\bigvee_{i \in I} A_i$. Because, if $\bigvee_{i \in I} A_i$ holds in a state, then one of them must hold and since it has a finite verification, the verification also works for the whole disjunction. Note that the semi-decidability condition and the existential nature of validity allows arbitrary disjunctions while it prohibits infinite conjunctions.\footnote{The reader may argue that using infinite sets and sequences may be somewhat problematic in the intuitionistic tradition. That is a very reasonable objection but at the same time it is also worth noting that the real meaning of the set $I$ and the sequence of propositions $\{A_i\}_{i \in I}$ is somehow open to meta-mathematical interpretations and therefore they can be chosen completely constructively. For instance, the set $I$ can be just the set of natural numbers and the sequence $\{A_i\}_{i \in I}$ can be a computable sequence of finite subsets. More mathematically, it means that everything in the argument is internalized in an elementary topos that formalizes what the intuitionist means by a set. For instance, the effective topos for the Russian school may be a reasonable choice for the universe. Having all said, the main point here is that while the conjunctions must be finite, the disjunctions can be arbitrary and this arbitrariness is something up to interpretation.} These ingredients are nothing but the conditions on a topology of a topological space. Therefore, the set of all finitely verifiable propositions is actually the set of opens of the space of the mental states. Therefore, it should not be surprising that intuitionistic propositional logic is sound and complete with respect to its topological interpretation that reads a proposition as an open subset of a given topological space; see \cite{Mc}. In this sense, intuitionism may be interpreted as the logic of space as opposed to the classical logic that corresponds to the logic of sets or discrete spaces. Compare the set of all opens of a space to the opens of a discrete space, namely the Boolean algebra of all subsets. \\

Now let us leave the truncated constructibility to address the actual explicit constructions. In this move, for any state we need a $\mathbf{Set}$-like world to encode the constructions of the propositions and not just their truth values. In this setting, the three structures that we have explained transform to the following higher order notions: First, a poset transforms into a category whose objects and morphisms are propositions and the constructions between them. Secondly, for conjunctions we need the categorical version of finite meets, i.e., finite limits. And finally, for disjunctions we have to bring categorical joins, i.e., small colimits. Together with some technical conditions, this \textit{new space} is nothing but a Grothendieck topos. In this sense, the generalized notion of space is canonically conceivable from the pure intuitionistic conception of a proposition - a truly borderless revolution, indeed! Moreover, it implies that we should not be surprised that Grothendieck topoi or their elementary version can serve as the models for intuitionistic set theories or type theories, since the latter is simply the syntactic axiomatization of the constructions that the former formalizes model-theoretically. Unfortunately, this paper does not have enough space to explain all the details of this interpretation. However, we strongly encourage the reader to pursue this logical/philosophical path to geometry and read any geometrical construction by keeping an eye on the foregoing interpretation. This briefly explained connection between constructivism and the different incarnations of the notion of space is a very well-established tradition and here we only had time to see the tip of the iceberg. To see how this connection may lead to some useful interpretations in topos theory, higher geometry and even computer science, see \cite{JoyalTierney}, \cite{Anel}, \cite{Ab0}, \cite{Ab1}, \cite{Ab} and \cite{TopViaLogic}.\\

Now, considering propositions as the open subsets of a (new) space, we are ready to address the complex, ubiquitous and hard to comprehend notion of implication. First note that any sophisticated anti-realistic philosophy needs an act of internalization; the way by which the creative subject internalizes her own notion of construction to be able to bring them to her consideration as the object of the study and not just its instrument. This internalization is actually what the implication is developed for. It transforms the provability order between propositions, $A \vdash B$, a meta-mathematical property, into the validity of another proposition, i.e., $A \to B$. In the case we also care about the explicit constructions, the implication or in this case the function space, implements the same idea to transform the set of constructions from $A$ to $B$ to the constructions of $A \to B$.\\

What is an internalizer? For the sake of simplicity, let us limit ourselves only to the constructibility case. Therefore, we have the provability order which we intend to internalize. There are many different structures that we can expect an implication to internalize. For instance, the order is reflexive, i.e., $A \vdash A$ for any proposition $A$ and it is transitive, i.e., ``$A \vdash B$ \textit{and} $B \vdash C$ \textit{implies} $A \vdash C$" for any propositions $A$, $B$, and $C$. The internalizations for these basic properties are $\vdash A \to A$ and 
\[
(A \to B) \wedge (B \to C) \vdash (A \to C),
\]
for any propositions $A$, $B$, and $C$. The order has also all finite conjunctions meaning that for any two  propositions $B$ and $C$, there exists a proposition $B \wedge C$ such that for any $A$ we have ``$A \vdash B \wedge C$ \textit{iff} ``$A \vdash B$ \textit{and} $A \vdash C$"" whose internalization is:
\[
A \to (B \wedge C)=(A \to B) \wedge (A \to C),
\]
and for all finite disjunctions it means the existence of $A \vee B$ such that for any $C$ we have ``$A \vee B \vdash C$ \textit{iff} ``$A \vdash C$ \textit{and} $B \vdash C$"" whose internalization is:
\[
(A \vee B) \to C=(A \to C) \wedge (B \to C)
\]
As we can observe by the foregoing instances, there can be many structures or properties that we may want to internalize and depending on that, there can be many different possible implications. The usual Heyting implications in posets, exponential objects in categories, the many-valued, the relevant and the linear implications and the monoidal internal hom structures in monoidal categories are only some of these many  implications. See \cite{Mac}, \cite{Bor1} and \cite{Substructural}. There are also some non-substructural internalizations. One of the early examples that also motivated the present work was introduced first by Visser \cite{Vi}, \cite{Vi2} and re-emerged in a more philosophically motivated form by Ruitenburg \cite{Ru2} to address the impredicativity problem of the implication. This implication is morally the Heyting implication without its modus ponens rule; see \cite{Ard2}, \cite{Ard3}, \cite{BasicPropLogic}, \cite{CJ1}.  The emergence of these weak implications then set the scene for a plethora of other and sometimes even weaker implications emerging philosophically \cite{Ru}; algebraically \cite{Restall}, \cite{CJ2}, \cite{Aliz1}, \cite{Aliz2}, \cite{Aliz3}, \cite{Aliz4}, \cite{Aliz5}, \cite{Lat}; proof theoretically \cite{Corsi}, \cite{Dosen}, \cite{Suz}, \cite{Sas}; via provability interpretations \cite{Vi3}, \cite{Iem1}, \cite{Iem2} and relational semantics \cite{Ard}, \cite{LitViss}, almost everywhere in the logical realm. Apart from the philosophically oriented reasons, the weak implications raise also some independent mathematical interests. In their propositional form, they appear in different logical disciplines including provability logic \cite{Vi} and preservability logic \cite{Vi3}, \cite{Iem1}, \cite{Iem2}, \cite{LitViss}. In their higher categorical form, they capture some type constructors called arrows by the functional programming community. Arrows were first introduced by Hughes \cite{Hughes} to encode some natural types of function-like entities that are not really functions. For instance, the type of all partial functions from $A$ to $B$, for the given types $A$ and $B$ is such an arrow type. Categorically speaking, they generalize monads, used elegantly to formalize the computational effects in \cite{Moggi}. For the categorical formalizations of arrows see \cite{Jacobs} and for more information on their role in programming and type theory see \cite{Pat} and \cite{Lin}.\\

Coming back to the spatial interpretation, we are facing a question: If the notion of space is powerful enough to formalize constructions, why not using them to also understand implications and exponentials? For this purpose, we have to bring in another important intuitionistic notion, different from the usual constructions. This notion is time. Assume that the mental states encode not only the current knowledge of the mind, but also the relevant temporal data including the actual moment that the mental state occupies in the time line. To encode this temporal structure, we add a temporal modality, $\nabla$, to construct a proposition $\nabla A$ from a proposition $A$, meaning ``\textit{$A$ holds at some point in the past}". First note that $\nabla A$ is a proposition itself. Since, if $\nabla A$ holds in a mental state, there is some point in the past in which $A$ holds. But $A$ is a proposition and hence has a finite verification at that point. Therefore, it is easy to bring that verification to the current mental state and save it as some temporal information of the past. Secondly, $\nabla$ is clearly monotone and union preserving. The reason for the latter is the existential nature of $\nabla$. More precisely, if $\nabla (\bigvee_{i \in I} A_i)$ holds at some state, then there exists some point in the past in which $\bigvee_{i \in I} A_i$ holds. Hence, one of $A_i$'s must hold in that point which implies $\nabla A_i$ holds at the current state. The converse is similar and easy. This completes the data we need for the temporal modality.\\

Back to the implications, using $\nabla$ as the temporal modality, it is possible to design an implication that brings the temporal structure to the scene. Define the implication by
\[
A \to_{\nabla} B= \bigcup \{C | \; \nabla C \wedge A \vdash B\}. \;\;\;\; (*)
\]
By this definition and the fact that $\nabla$ preserves all disjunctions, it is not hard to prove 
\[
\nabla C \wedge A \vdash B \;\;\; \text{iff} \;\;\; C \vdash A \to_{\nabla} B, \;\;\;\; (**)
\]
which can be read as a pair of the introduction-elimination rules that defines the implication. Note that the definition $(*)$ has been dictated by the equivalence $(**)$ in a unique way. The introduction-elimination rules state that $A \to_{\nabla} B$ is a consequence of $C$ if the fact that $C$ constructed before plus the truth of $A$ at this moment implies the truth of $B$. Note that the only role that $\nabla$ plays is delaying the implication. Philosophically speaking, it is the machinery to ensure a delay between constructing an implication and using it. For instance, based on the introduction-elimination rules, we know that $\nabla (A \to_{\nabla} B) \wedge A \vdash B$ while there is no reason to have $(A \to_{\nabla} B) \wedge A \vdash B$. The former means that $A \to_{\nabla} B$ holds (constructed) before and hence, at this moment we can argue that in the presence of $A$, we can use the implication to show $B$. While in the latter case, $A \to_{\nabla} B$ is just constructed and it can not be applicable at the moment. Now, identifying the set of propositions by the opens of a topological space, we have a mathematical formalization of the foregoing discussion. It is enough to have a topological space and a monotone and union preserving map $\nabla: \mathcal{O}(X) \to \mathcal{O}(X)$ encoding the temporal modality. Calling such a data a spacetime, we can ensure that all spacetimes have their canonical implications, as defined above. Admittedly, these implications define a special class of all possible implications. However, we will show that any reasonable implication is actually representable by these temporal implications. The advantage of a temporal implication is the full introduction-elimination rules that it possesses. These rules make a natural machinery for internalization and leads to a very well-behaved implication as opposed to the arbitrary selection of structures that an implication may randomly internalize. In sum, our motto is that the study of the notion of time can almost be the study of the notion of implication.
In this paper and in its sequel, we intend to follow this motto to investigate the general notion of implication via its incarnations in the above-mentioned spacetimes. Here, we will focus on the algebraic side of the story and leave the full general categorical setting and its categorical spacetimes as the more structured Grothendieck topoi to the forthcoming work.\\ 

The structure of the present paper is as follows. In Section \ref{Pre}, we will present a rather intense section on preliminaries to make the paper self-contained and hence accessible for a wider range of audience. In Section \ref{QuantalesAndIntuitionism} quantales will be presented as the natural generalization of the notion of space. We will also discuss how to capture a more subjective formalization of finitely verifiable propositions in which even observing the truth of a proposition changes the mental state. In Section \ref{Imp}, we will define an abstract implication as an order internalizing operation. Then in Section \ref{Non-ComSpacetime}, we will develop a generalized version of spacetimes via quantales as developed in Section \ref{QuantalesAndIntuitionism}.  Section \ref{Rep} is devoted to the representation theorems to show that a considerable class of abstract implications are essentially the implications of the generalized spacetimes. In Section \ref{LogicsofSpcaeTimes}, we will continue by developing a series of sub-structural logics for spacetimes and we will study their topological semantics. Their Kripke semantics will be introduced in Section \ref{KripkeModels}. And finally, in Section \ref{Sub-int}, we will show how to embed the sub-intuitionistic logics, the logics of weak implications into these more well-behaved logics of spacetime.
\section{Preliminaries} \label{Pre}
In this section we will review some basic facts and some useful constructions, including the notions of poset, adjunction, the monoidal posets, quantales and some completion techniques. These are very well-known facts and constructions. However, for the sake of completeness and being accessible to a wider range of audience, we prefer to briefly explain some necessary parts here. For more information, see  \cite{StoneSpaces}, \cite{TopViaLogic} and \cite{Bor3} on locales and completions and \cite{Ros2} on quantales.

\begin{dfn}\label{DefMonoid}
By a monoid $\mathcal{M}=(M, \otimes, e)$, we mean a set $M$ equipped with a binary multiplication function $\otimes: M \times M \to M$ and an element $e \in M$ such that the multiplication is associative, i.e., for all $m, n, k \in M$ we have $(m \otimes n) \otimes k= m \otimes (n \otimes k)$ and $e$ is the identity element, i.e., for all $m \in M$ we have $e \otimes m=m = m \otimes e$. If $\mathcal{M}=(M, \otimes_M, e_M)$ and $\mathcal{N}=(N, \otimes_N, e_N)$ are two monoids, by a homomorphism $f: \mathcal{M} \to \mathcal{N}$ we mean a structure preserving function $f: M \to N$, i.e., $f(e_M)=e_N$ and for any $m, n \in M$, $f(m \otimes_M n)=f(m) \otimes_N f(n)$.
\end{dfn}

\begin{dfn}\label{DefPoset}
By a poset we mean a pair $\mathcal{A}=(A, \leq)$, where $A$ is a set and $\leq$ is a reflexive, anti-symmetric and transitive binary relation over $A$. By $\mathcal{A}^{op}$ we mean the opposite poset of $\mathcal{A}$, consisting of $A$ with the opposite order. When there is no risk of confusion, we denote $\mathcal{A}^{op}$ simply by $A^{op}$. By a downset of $\mathcal{A}$, we mean a subset of $A$ that is $\leq$-downward closed, i.e., a subset $S$ such that if $a \leq b$ and $b \in S$, then $a \in S$. By an upset we mean a $\leq$-upward closed subset, i.e., a subset $S$ such that if $a \leq b$ and $a \in S$, then $b \in S$.\\
By the join (the meet) of a subset $S \subseteq A$, we mean the greatest lower bound (the least upper bound) of $S$ in $A$, if it exists. We denote it by $\bigvee S$ ($\bigwedge S$). If $S$ has at most two elements $a, b \in A$, we use the notation $a \vee b$ for the join ($a \wedge b$ for the meet) and we denote the join of the empty set by $0$ (the meet of the empty set by $1$). A poset is called join semi-lattice or finitely cocomplete (meet-semilattice or finitely complete) if the join (meet) of all finite subsets of $A$ exist. It is called cocomplete (complete) if the join (meet) of all subsets of $A$ exist. And finally by a map between two posets $\mathcal{A}=(A, \leq_A)$ and $\mathcal{B}=(B, \leq_B)$, denoted by $f: \mathcal{A} \to \mathcal{B}$, we simply mean an order preserving function $f: A \to B$ meaning $f(a) \leq_B f(b)$ for any $a \leq_A b$. An order-preserving map is called an embedding if for any $a, b \in A$, the inequality $f(a) \leq_B f(b)$ implies $a \leq_A b$.
\end{dfn}

\begin{rem}
Note that any cocomplete poset is also complete and vice versa. It is easy to see that if $(A, \leq)$ is cocomplete and $S \subseteq A$ then $\bigvee \{x \in A | \forall s \in S \; (x \leq s)\}$ exists and serves as the meet $\bigwedge S$. The converse is similar.
\end{rem}

\begin{dfn}\label{DefAdjunction}
Let $\mathcal{A}=(A, \leq_A)$ and $\mathcal{B}=(B, \leq_B)$ be two posets and $f: \mathcal{A} \to \mathcal{B}$ and $g: \mathcal{B} \to \mathcal{A}$ be two maps. The map $f$ is called a left adjoint for $g$ (or equivalently $g$ is a right adjoint for $f$), if for all $a \in A$ and $b \in B$,
\[
f(a) \leq_B b \;\;\;\; \text{iff} \;\; \; \; a \leq_A g(b)
\]
In such situation the pair $(f, g)$ is called an adjunction and it is denoted by $f \dashv g: \mathcal{B} \to \mathcal{A}$ or simply $f \dashv g$.
\end{dfn}

\begin{rem}\label{SemiInverse}
Note that given $f \dashv g: \mathcal{B} \to \mathcal{A}$, we have $fg(b) \leq_B b$, for all $b \in B$ because $g(b) \leq_B g(b)$. Similarly, $a \leq_A gf(a)$, for all $a \in A$. Moreover, in any adjunction situation, we have $fgf=f$. The reason is that since for any $a$, $a \leq_A gf(a)$, by applying $f$ on both sides we have $f(a) \leq_B fgf(a)$. On the other hand, $fg(b) \leq_B b$, for all $b \in B$. Hence, for $b=f(a)$ we have $fg(f(a)) \leq_B f(a)$. Therefore, $fgf(a)=f(a)$. Similarly, $gfg=g$.
\end{rem}

\begin{thm}(Adjoint Functor Theorem for Posets)\label{AFT}
Let $\mathcal{A}=(A, \leq_A)$ be a complete poset and $\mathcal{B}=(B, \leq_B)$ be a poset. Then an order preserving map $f: \mathcal{A} \to \mathcal{B}$ has a right (left) adjoint iff it preserves all joins (meets). 
\end{thm}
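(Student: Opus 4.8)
The plan is to prove the right-adjoint/join-preservation equivalence directly and then obtain the left-adjoint/meet-preservation statement by duality. So fix an order-preserving $f : \mathcal{A} \to \mathcal{B}$ with $\mathcal{A}$ complete (equivalently, by the remark following Definition \ref{DefPoset}, cocomplete, so all joins exist in $A$).

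For the forward direction, suppose $f$ has a right adjoint $g$. Given any $S \subseteq A$, I claim $f(\bigvee S)$ is the join of $\{f(s) \mid s \in S\}$ in $B$. That $f(\bigvee S)$ is an upper bound is immediate from monotonicity of $f$, since $s \leq_A \bigvee S$ for each $s \in S$. For minimality, if $b \in B$ satisfies $f(s) \leq_B b$ for all $s$, then the adjunction gives $s \leq_A g(b)$ for all $s$, so $g(b)$ is an upper bound of $S$ and hence $\bigvee S \leq_A g(b)$; applying the adjunction once more yields $f(\bigvee S) \leq_B b$. Thus $f(\bigvee S) = \bigvee_{s \in S} f(s)$, which is exactly join preservation.

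The substantive direction is the converse. Here I would use completeness of $\mathcal{A}$ to define a candidate adjoint by the formula
\[
g(b) = \bigvee \{ a \in A \mid f(a) \leq_B b \},
\]
which exists since $\mathcal{A}$ is complete. One first checks that $g$ is order preserving: enlarging $b$ enlarges the set being joined. To verify $f \dashv g$ I would establish the two halves of the defining biconditional separately. If $f(a) \leq_B b$, then $a$ lies in the set defining $g(b)$, so $a \leq_A g(b)$ directly. For the reverse, if $a \leq_A g(b)$, then monotonicity gives $f(a) \leq_B f(g(b))$, and this is where join preservation does the work: writing $S_b = \{a' \in A \mid f(a') \leq_B b\}$, we have $f(g(b)) = f(\bigvee S_b) = \bigvee_{a' \in S_b} f(a')$, and since $b$ is by construction an upper bound of $\{f(a') \mid a' \in S_b\}$, this join is $\leq_B b$; hence $f(a) \leq_B b$.

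The main obstacle — really the only place where the hypotheses are used in an essential way — is this last step: the candidate $g$ is forced by the formula, but proving the nontrivial inequality $f(g(b)) \leq_B b$ (a counit-type inequality, cf. Remark \ref{SemiInverse}) genuinely requires that $f$ carry the possibly infinite join $\bigvee S_b$ to the join of its image, which is precisely the join-preservation hypothesis. Finally, the parenthetical left-adjoint/meet-preservation claim follows by applying the statement just proven to $f$ viewed as a map $\mathcal{A}^{op} \to \mathcal{B}^{op}$: the opposite of a complete poset is complete, joins in $\mathcal{A}^{op}$ are meets in $\mathcal{A}$, and a left adjoint of $f$ is the same thing as a right adjoint of $f : \mathcal{A}^{op} \to \mathcal{B}^{op}$, so no separate argument is needed.
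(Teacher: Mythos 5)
Your proof is correct and complete: the paper itself gives no argument for this theorem (it only cites Borceux), and what you have written is precisely the standard proof one would find there — the candidate adjoint $g(b)=\bigvee\{a \in A \mid f(a)\leq_B b\}$, the verification that join preservation yields the counit-type inequality $f(g(b))\leq_B b$, and the dual statement obtained by passing to $\mathcal{A}^{op}$ and $\mathcal{B}^{op}$. You also correctly flag the one point needing care, namely that ``preserves joins'' guarantees that $\bigvee_{a'\in S_b} f(a')$ exists in $\mathcal{B}$ and equals $f(\bigvee S_b)$ even though $\mathcal{B}$ is not assumed complete.
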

\begin{proof}
See \cite{Bor1}.
\end{proof}

\begin{dfn}\label{DefMonoidalPoset}
A monoidal poset is a structure $\mathcal{A}=(A, \leq, \otimes, e)$ where $(A, \leq)$ is a poset and $(A, \otimes, e)$ is a monoid whose multiplication is compatible with the order, i.e., $\otimes$ is order-preserving in each of its arguments. A monoidal poset is called distributive if its poset is a join-semilattice and its multiplication distributes over all finite joins in each of its arguments.
\end{dfn}

\begin{dfn}\label{MonoidalMap}
Let $\mathcal{A}=(A, \leq_A, \otimes_A, e_A)$ and $\mathcal{B}=(B, \leq_B, \otimes_B, e_B)$ be two monoidal posets. By a lax monoidal map $f: \mathcal{A} \to \mathcal{B}$ we mean an order preserving function $f: A \to B$ such that $f(e_A) \geq e_B$ and for any $a, b \in A$ we have $f(a \otimes_A b) \geq f(a) \otimes_B f(b)$. A map is called oplax monoidal if it is order preserving and the last two inequalities are in the reverse order, i.e., $f(e_A) \leq e_B$ and for any $a, b \in A$ we have $f(a \otimes_A b) \leq f(a) \otimes_B f(b)$. A map is called strict monoidal if it is both lax monoidal and oplax monoidal. It is called strict monoidal embedding if it is strict monoidal and if $f(a) \leq f(b)$ implies $a \leq b$, for any $a, b \in A$.
\end{dfn}

\begin{thm}\label{MonoidalAdjoint}
Let $\mathcal{A}=(A, \leq_A, \otimes_A, e_A)$ and $\mathcal{B}=(B, \leq_B, \otimes_B, e_B)$ be two monoidal posets, $f: \mathcal{A} \to \mathcal{B}$ be an oplax monoidal (lax monoidal) map and $g:\mathcal{B} \to \mathcal{A}$ be its right (left) adjoint. Then $g$ is lax monoidal (oplax monoidal). 
\end{thm}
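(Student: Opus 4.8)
The plan is to prove both statements by transposing the desired inequalities across the adjunction and then reducing them to the hypotheses on $f$ together with the unit/counit inequalities recorded in Remark \ref{SemiInverse}. I will carry out the first case in detail, where $f$ is oplax monoidal and $g$ is its right adjoint (so that $f \dashv g$) and the claim is that $g$ is lax monoidal; the second case is entirely dual and I will only indicate the sign changes at the end.

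For the unit condition I want $e_A \leq_A g(e_B)$. By the adjunction $f \dashv g$ this is equivalent to $f(e_A) \leq_B e_B$, which is precisely the first oplax condition on $f$, so nothing more is needed. For the multiplicativity condition I want
\[
g(b) \otimes_A g(c) \leq_A g(b \otimes_B c)
\]
for all $b, c \in B$. Again by the adjunction this is equivalent to
\[
f(g(b) \otimes_A g(c)) \leq_B b \otimes_B c,
\]
so it suffices to bound the left-hand side. First, oplax monoidality of $f$ gives $f(g(b) \otimes_A g(c)) \leq_B f(g(b)) \otimes_B f(g(c))$. Next, the counit inequalities $fg(b) \leq_B b$ and $fg(c) \leq_B c$ from Remark \ref{SemiInverse}, combined with the fact that $\otimes_B$ is order-preserving in each of its arguments (Definition \ref{DefMonoidalPoset}), yield $f(g(b)) \otimes_B f(g(c)) \leq_B b \otimes_B c$. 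Chaining the two inequalities produces the required bound, and transposing back along the adjunction finishes this case.

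For the dual case, where $f$ is lax monoidal and $g$ is its left adjoint (so $g \dashv f$), the same strategy applies with every inequality reversed. The unit condition $g(e_B) \leq_A e_A$ transposes to $e_B \leq_B f(e_A)$, which is the lax unit condition on $f$; and $g(b \otimes_B c) \leq_A g(b) \otimes_A g(c)$ transposes to $b \otimes_B c \leq_B f(g(b) \otimes_A g(c))$, which follows by first applying the unit inequalities $b \leq_B fg(b)$ and $c \leq_B fg(c)$ from Remark \ref{SemiInverse} together with monotonicity of $\otimes_B$, and only then the lax inequality for $f$. I do not expect a genuine obstacle here; the one point requiring care is purely bookkeeping — applying the structural inequality for $f$ and the unit/counit inequality in the correct order and direction, since swapping them would break the chain. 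The whole argument rests on nothing beyond the adjunction transpose, the semi-inverse inequalities of Remark \ref{SemiInverse}, and the order-compatibility of the monoidal products.
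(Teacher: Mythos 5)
Your proof is correct and follows essentially the same route as the paper's: transpose the unit condition across the adjunction to the oplax (resp.\ lax) unit inequality for $f$, and for multiplicativity chain the oplax (resp.\ lax) inequality for $f$ with the counit (resp.\ unit) inequalities of Remark \ref{SemiInverse} before transposing back. The dual case you sketch is exactly what the paper dismisses as ``similar,'' so there is nothing to add.
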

\begin{proof}
We prove the case when $f$ is oplax and $f \dashv g$. The other case is similar. Since $f$ is oplax we have $f(e_A) \leq_B e_B$ from which and by using the adjunction we have $e_A \leq_A g(e_B)$. For the other condition, note that by the Remark \ref{SemiInverse}, the adjunction implies $f(g(a)) \leq_B a$ and $f(g(b)) \leq_B b$. By the fact that $f$ is oplax, we have
\[
f (g(a) \otimes g(b)) \leq_B f(g(a)) \otimes f(g(b)) \leq_B a \otimes b
\]
and by the adjunction again, we have $g(a) \otimes g(a) \leq_B g (a \otimes b)$, which completes the proof.
\end{proof}

\begin{dfn}\label{Quantale}
A monoidal poset $\mathscr{X}$ is called a quantale if its order is cocomplete and its multiplication distributes over all joins on both sides. A quantale is a locale if its monoidal structure is the meet structure of the poset. In other words, a locale is a cocomplete poset whose meet distributes over all of its joins.
\end{dfn}

\begin{rem}
Note that quantales are also complete. This provides the enough structure to interpret conjunctions in a quantale, as we will see later.
\end{rem}

Here are some prototypical examples of locales and quantales that help to develop the intuition:

\begin{exam}
Let $\mathcal{S}=(S, \leq)$ be a cocomplete poset and define $X$ as the set of all join preserving functions $f: \mathcal{S} \to \mathcal{S}$ with the pointwise order $\leq_X$. Then $\mathscr{X}=(X, \leq_X, \circ, id)$ is a quantale where $\circ$ is the usual composition and $id: \mathcal{S} \to \mathcal{S}$ is the identity map.  
\end{exam}

\begin{exam}
Let $X$ be a set and $\mathcal{R}$ be a set of binary relations over $X$ that includes the equality and is closed under composition and arbitrary union. Then $\mathscr{X}=(\mathcal{R}, \subseteq, \circ, =)$ is a quantale where $\circ$ is the relation composition.  
\end{exam}

\begin{exam}
Let $X$ be a topological space. Then $\mathscr{X}=(\mathcal{O}(X), \subseteq, \cap, X)$ is a locale where $\mathcal{O}(X)$ is the set of all open subsets of $X$.
\end{exam}

\begin{exam}\label{MonoidExample}
Let $\mathcal{M}=(M, \otimes, e)$ be a monoid. Consider $I(\mathcal{M})$ as the set of all ideals of $\mathcal{M}$, i.e., the subsets of $M$ closed under arbitrary left and right multiplication. Then $(I(\mathcal{M}), \subseteq, \cdot, M)$ is a quantale where 
\[
I \cdot J=\{i \otimes j | i \in I, j \in J \}
\]
The reason is that the union of any set of ideals is an ideal again and the multiplication clearly distributes over the union. 
\end{exam}

\begin{rem}\label{CanonicalImplication}
Note that if $\mathscr{X}$ is a quantale, then for any fixed $a \in \mathscr{X}$, the functions $l_a, r_a : \mathscr{X} \to \mathscr{X}$ mapping $x$ into $a \otimes x$ and $x \otimes a$, respectively, preserve all joins and since the poset is cocomplete, by the adjoint functor theorem, Theorem \ref{AFT}, they both have right adjoints. Because of some technical reasons, we are only interested in $l_a$. Therefore, it will be useful to have a name and a notation for $l_a$'s right adjoint. We denote it by $a \Rightarrow (-)$ and we call the binary operator $\Rightarrow$, \textit{the canonical implication} of the qunatale $\mathscr{X}$. Spelling out the adjunction conditions, it means that for any $a, b, c \in \mathscr{X}$, we have $a \otimes b \leq c$ iff $b \leq a \Rightarrow c$. Note that if $\mathscr{X}$ is a locale, its canonical implication is just the usual Heyting implication of $\mathscr{X}$.
\end{rem}

\begin{dfn}\label{GeometricMap}
Let $\mathscr{X}, \mathscr{Y}$ be two quantales. Then by a lax/oplax/strict geometric morphism $f: \mathscr{X} \to \mathscr{Y}$, we mean a lax/oplax/strict monoidal join preserving map $f: \mathscr{X} \to \mathscr{Y}$.
\end{dfn}

\begin{exam}
Let $X$ and $Y$ be two topological spaces, $\mathcal{O}(X)$ and $\mathcal{O}(Y)$ be the poset of all open subsets of $X$ and $Y$, respectively and $f: X \to Y$ be a continuous function. Then $f^{-1}: \mathcal{O}(Y) \to \mathcal{O}(X)$ is a strict geometric morphism. 
\end{exam}
It is worth mentioning that over locales, any join preserving map $f: \mathscr{X} \to \mathscr{X}$ is an oplax geometric morphism because $f$ is order preserving which implies $f(a \wedge b) \leq f(a) \wedge f(b)$.

\begin{exam}\label{LiftingFromSetsToQuantales}
Let $X$ and $Y$ be two sets and $f: X \to Y$ be a function. Then $f$ induces a lax geometric morphism $f^*: P(Y \times Y) \to P(X \times X)$ by $f^*(R)=F^{-1}(R)$, where $F: X \times X \to Y\times Y$ and $F(x, x')=(f(x), f(x'))$. The map $f^*$ is clearly union preserving. Moreover, for any two relations $R, S \subseteq Y \times Y$, we have $F^{-1}(R) \circ F^{-1}(S)  \subseteq F^{-1}(R \circ S)$, because, if $(x, x') \in F^{-1}(R) \circ F^{-1}(S)$ then there is $z \in X$ such that $(x, z) \in F^{-1}(S)$ and $(z, x') \in F^{-1}(R)$. Therefore, $(f(x), f(z)) \in S$ and $(f(z), f(x')) \in R$ which implies $(f(x), f(x')) \in R \circ S$ from which $(x, x') \in F^{-1}(R \circ S)$.\\
The function $f$ also induces an oplax geometric morphism. Define $f_*: P(X \times X) \to P(Y \times Y)$ by $f_*(R)=F[R]$ as the $F$-image of $R$. This is also union preserving. Moreover, we have $f_*(R \circ S) \subseteq f_*(R) \circ f_*(S)$, because, if $(y, y') \in F[R \circ S]$ then there is $x, x', z  \in X$ such that $y=f(x)$, $y'=f(x')$, $(x, z) \in S$ and $(z, x') \in R$. Therefore, $(f(x), f(z)) \in F[S]$ and $(f(z), f(x')) \in F[R]$. Hence, $(y, y')=(f(x), f(x')) \in f_*(R) \circ f_*(S)$.   
\end{exam}

\begin{exam}\label{LiftingFromMonToQuantales}
Let $\mathcal{M}=(M, \otimes_M, e_M)$ and $\mathcal{N}=(N, \otimes_N, e_N)$ be two monoids and $f: \mathcal{M} \to \mathcal{N}$ be a homomorphism. Consider $I(\mathcal{M})$ and $I(\mathcal{N})$, defined in Example \ref{MonoidExample}. Then $f$ induces a lax geometric morphism $f^*: I(\mathcal{N}) \to I(\mathcal{M})$ by $f^*(I)=f^{-1}(I)$. It is clearly union preserving. Moreover, we have $f^*(I) f^*(J) \subseteq f^*(IJ)$ because if $x \in f^*(I) f^*(J)$ then there are $y \in f^*(I)$ and $z \in f^*(J)$ such that $x=y \otimes_M z$. Since $f$ is a homomorphism we have $f(x)=f(y) \otimes_N f(z) \in IJ$. Therefore, $x \in f^*(IJ)$. The homomorphism $f$ also induces an oplax geometric morphism defined by $f_*: I(\mathcal{M}) \to I(\mathcal{N})$ by $f_*(I)=Nf[I]N$, where $f[I]$ is the image of $I$ and $Nf[I]N$ is the generated ideal of the image of $I$. This map clearly preserves union. Moreover, $f_*(IJ) \subseteq f_*(I) f_*(J)$, because if $x \in f_*(IJ)$, then there are $m, n \in N$, $i \in I$ and $j \in J$ such that $x=m \otimes_N f(i\otimes_M j) \otimes_N n$. Since $f$ is a homomorphism we have $x=m \otimes_N f(i) \otimes_N f(j) \otimes_N n \in f_*(I) f_*(J)$. 
\end{exam}
In the rest of this section, we will recall some of the main completion techniques for the monoidal posets. We will address the details of constructions as we need them later in some other constructions of the paper.
\begin{thm}\label{Completions}(Downset and Ideal Completions)
Let $\mathcal{A}=(A, \leq, \otimes, e)$ be a monoidal poset. Then there exists a quantale $D(\mathcal{A})$, called the downset completion of $\mathcal{A}$ and a strict monoidal embedding $i: \mathcal{A} \to D(\mathcal{A})$. If $\mathcal{A}$ has all finite joins and distributive, then there exists another quantale $I(\mathcal{A})$, called the ideal completion of $\mathcal{A}$ and a finite join-preserving strict monoidal embedding $i: \mathcal{A} \to I(\mathcal{A})$. If $\mathcal{A}$ has all finite meets, then in both cases $i$ preserves all finite meets.
\end{thm}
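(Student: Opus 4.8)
The plan is to realize $D(\mathcal{A})$ as the poset of all downsets of $\mathcal{A}$ ordered by inclusion, and $I(\mathcal{A})$ as the sub-poset of those downsets that are in addition closed under finite joins, equipping both with a tensor induced by $\otimes$. Concretely, for downsets $S, T$ I would set
\[
S \otimes T = \{a \in A \mid \exists s \in S, \exists t \in T, \; a \leq s \otimes t\},
\]
i.e. the downward closure of the set of products, with unit the principal downset $\downarrow e$; the candidate embedding is $i(a) = \downarrow a$. First I would check that $D(\mathcal{A})$ is cocomplete, which is immediate since an arbitrary union of downsets is a downset, so joins are unions and meets are intersections. Associativity and the unit laws for $\otimes$ then follow from the corresponding laws in $\mathcal{A}$ together with monotonicity of the base multiplication; for the unit, $u \otimes s \leq e \otimes s = s$ for $u \leq e$ gives $\downarrow e \otimes S \subseteq S$, and $s = e \otimes s$ gives the reverse inclusion.

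The only genuinely structural point for $D(\mathcal{A})$ is that $\otimes$ distributes over \emph{all} joins on both sides. Here the definition does the work for free: since joins in $D(\mathcal{A})$ are unions, the existential quantifier over $t$ in $S \otimes \bigcup_\alpha T_\alpha$ splits across the union, giving $S \otimes \bigcup_\alpha T_\alpha = \bigcup_\alpha (S \otimes T_\alpha)$ with no hypothesis on $\mathcal{A}$ beyond monotonicity. That $i$ is a strict monoidal embedding is then routine: $\downarrow a \subseteq \downarrow b$ iff $a \leq b$ gives the embedding property, $i(e) = \downarrow e$ is the unit, and $i(a) \otimes i(b) = \downarrow(a \otimes b) = i(a \otimes b)$ follows from monotonicity of $\otimes$ (for $\supseteq$ take $s = a$, $t = b$; for $\subseteq$ use $s \otimes t \leq a \otimes b$).

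For the ideal completion I would keep the same tensor but restrict it to ideals, and here the distributivity hypothesis becomes essential. I would first verify the restriction is well defined using only monotonicity: $a \leq i_1 \otimes j_1$ and $a' \leq i_2 \otimes j_2$ give $a \vee a' \leq (i_1 \vee i_2) \otimes (j_1 \vee j_2)$, so $S \otimes T$ is again closed under finite joins whenever $S, T$ are, hence an ideal; intersections of ideals are ideals and the generated-ideal construction supplies joins, so $I(\mathcal{A})$ is cocomplete. In $I(\mathcal{A})$ the joins are no longer unions but generated ideals, namely downward closures of finite joins of elements of the union, and this is exactly where the free splitting argument above breaks. The main obstacle, and the step I would treat most carefully, is the distribution law $I \otimes \bigvee_\alpha J_\alpha = \bigvee_\alpha (I \otimes J_\alpha)$: given a generator $i \otimes b$ with $b \leq \bigvee_k t_k$ and each $t_k$ drawn from some $J_{\alpha_k}$, I would invoke distributivity of the base multiplication over finite joins to write $i \otimes b \leq i \otimes \bigvee_k t_k = \bigvee_k (i \otimes t_k)$, observe that each $i \otimes t_k$ lies in $I \otimes J_{\alpha_k}$, and conclude the finite join lies in $\bigvee_\alpha (I \otimes J_\alpha)$; the reverse inclusion is monotonicity.

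Finally, that $i : \mathcal{A} \to I(\mathcal{A})$ preserves finite joins reduces to checking that $\downarrow(a \vee b)$ is the smallest ideal containing $\downarrow a \cup \downarrow b$ and that $\downarrow 0 = \{0\}$ is the least ideal, together with the observation that each $\downarrow a$ is genuinely an ideal (closed under binary joins and containing $0$). For the last clause, if $\mathcal{A}$ has all finite meets then in both completions binary meets are intersections, so $i(a \wedge b) = \downarrow a \cap \downarrow b = \downarrow(a \wedge b)$ and $i(1) = \downarrow 1 = A$ is the top element, giving preservation of all finite meets. Apart from the distributivity step in the ideal case, every remaining verification is routine bookkeeping transported from $\mathcal{A}$ through the downward-closure operation.
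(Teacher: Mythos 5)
Your proposal is correct and follows essentially the same route as the paper: the same downward-closure tensor on downsets with unit $\downarrow e$ and embedding $a \mapsto \downarrow a$, the same restriction to ideals with generated-ideal joins, and the same use of base distributivity precisely (and only) at the step $i \otimes \bigvee_k t_k = \bigvee_k (i \otimes t_k)$. The remaining verifications (unit laws, embedding, meet and join preservation via intersections and generated ideals) match the paper's proof in both substance and level of detail.
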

\begin{proof}
First let us explain the downset completion that works for monoidal posets that do not necessarily have the join structure. Later we will also address the joins and the distributive case. Define $\mathscr{X}=D(\mathcal{A})$ as the set of all downsets of $A$ with the inclusion as its order. Since downsets are closed under arbitrary union and intersection, they are the joins and the meets of the poset, respectively. Define the map $i: A \to \mathscr{X}$ by $i(a)=\{x \in A | x \leq a\}$ and the monoidal structure of $\mathscr{X}$ by $e_{\mathscr{X}}=i(e)$ and
\[
I \otimes_{\mathscr{X}} J = \{x \in A | \; \exists i \in I \exists j \in J \; (x \leq i \otimes j)\},
\]
for any downsets $I$ and $J$. Note that $I \otimes_{\mathscr{X}} J $ is also a downset. Moreover, it is not hard to prove that this multiplication is associative with the identity element $e_{\mathscr{X}}$ and it distributes over all unions. Therefore, $(\mathscr{X}, \otimes_{\mathscr{X}}, e_{\mathscr{X}})$ is actually a quantale. Moreover, $i$ is a strict monoidal map because by definition, $e_{\mathscr{X}}=i(e)$ and
\[
i(a) \otimes_{\mathscr{X}} i(b)=\{x \in A| \exists i \leq a \exists j \in b \; (x \leq i \otimes j)\} = \{x \in A | x \leq a \otimes b\}.
\] 
Finally, note that $i$ is clearly an embedding, because,
\[
i(a) \subseteq i(b)  \;\;\; \text{iff} \;\;\; \{x \in A | x \leq a\} \subseteq \{x \in A | x \leq b\} \;\;\; \text{iff} \;\;\; a \leq b,
\]
and if $\mathcal{A}$ has all finite meets, $i$ preserves them because, $i(1)=\{x \in A | x \leq 1\}=A$ and 
\[
x \in i(a) \cap i(b) \;\;\; \text{iff} \;\;\; (x \in a \;\; \text{and} \;\; x \in b) \;\;\; \text{iff} \;\;\; x \leq a \wedge b \;\;\; \text{iff} \;\;\; x \in i(a \wedge b),
\]
which implies $i(a) \cap i(b)=i(a \wedge b)$.\\

Now, let us move to the distributive case, where $\mathcal{A}=(A, \leq, \otimes, e)$ has all finite joins. Then the foregoing function $i$ does not necessarily preserve the join structure of $\mathcal{A}$. To handle this issue, we have to change $\mathscr{X}$ a little bit: Define $\mathscr{Y}=I(\mathcal{A})$ as the poset of all ideals of $A$, i.e., all downsets $I \subseteq A$ such that $0 \in I$ and $a \vee b \in I$, for any $a, b \in I$. We want to show that $\mathscr{Y}$ with the join 
\[
\bigvee_{i \in N} I_i= \{x \in A | \exists x_1, \ldots x_n \in \bigcup_{i \in N} I_i \; (x \leq \bigvee_{j=1}^n x_j) \}
\]
and the same monoidal structure as of $\mathscr{X}$'s is a quantale and the previous function $i$ is again an embedding that also preserves all finite joins. First, it is not hard to prove that $\bigvee$ maps ideals to ideals and is actually the join of the family $\{I_i\}_{i \in N}$ in the inclusion order over ideals. Secondly, note that the original $i: A \to \mathscr{X}$ actually lands into the set of ideals $\mathscr{Y}$, because, $\{x \in A | x \leq a\}$ is closed under all finite joins. Note also that $i$ preserves all finite joins because, 
\[
i(a) \vee i(b)=\{x \in A| \exists i \leq a \exists j \leq b \; (x \leq i \vee j)\} = \{x \in A | x \leq a \vee b\}.
\] 
Since the intersection of ideals is also an ideal, the meet structure for ideals is also the intersection. Hence, the same argument for meet preservation by $i$ works here, as well. Thirdly, note that the defined $\otimes$ on $\mathscr{X}$ maps ideal to ideals, meaning that if $I$ and $J$ are ideals then so is $I \otimes J$. To prove this claim, first note that $0 \otimes 0 \leq 0 \otimes e=0$ from which $0 \otimes 0=0$ and hence $0 \in I \otimes J$. Secondly, assume that $x, y \in I \otimes J$. We want to show that $x \vee y \in I \otimes J$. By definition, there exist $i, i' \in I$ and $j, j' \in J$ such that $x \leq i \otimes j$ and $y \leq i' \otimes j'$. By monotonicity of $\otimes$ we have $x \leq (i \vee i') \otimes (j\vee j')$ and $y \leq (i \vee i') \otimes (j \vee j')$ and hence $ x \vee y \leq [(i \vee i') \otimes (j \vee j')]$.
Since both $I$ and $J$ are closed under finite joins, $i \vee i' \in I$ and $j \vee j' \in J$ and hence, $x \vee y \in I \otimes J$.\\
\\
Finally, we show that the multiplication distributes over joins, i.e., 
\[
\bigvee_{n \in N} (I_n \otimes J)=(\bigvee_{n \in N} I_n) \otimes J 
\;\;\;\; \text{and} \;\;\;\;
I \otimes (\bigvee_{n \in N} J_n)=\bigvee_{n \in N} (I \otimes J_n).
\]
We will prove the left equality. The right one is similar. There are two directions to prove. $\bigvee_{n \in N} (I_n \otimes J) \subseteq (\bigvee_{n \in N} I_n) \otimes J $ is clear by monotonicity. For the other direction, assume $x \in (\bigvee_{n \in N} I_n) \otimes J$. By definition, there exist $y \in \bigvee_{n \in N} I_n$ and $j \in J$ such that $x \leq y \otimes j$. Again by definition, there exist $i_1, i_2, \ldots, i_k \in \bigcup_{n \in N} I_n$ such that $y \leq i_1 \vee \ldots \vee i_k$. By distributivity, we have 
\[
x \leq (i_1 \otimes j) \vee (i_2 \otimes j) \vee \ldots \vee (i_k \otimes j).
\]
But since each $i_r$ is in at least one $I_{m_r}$, we have 
\[
i_r \otimes j \in (I_{m_r} \otimes J) \subseteq \bigvee_{n \in N} (I_n \otimes J).
\]
Since $\bigvee_{n \in N} (I_n \otimes J)$ is closed under finite joins, we have $x \in \bigvee_{n \in N} (I_n \otimes J)$.
\end{proof}

\begin{rem}\label{2}
Note that in the both downset and ideal completions, if the monoidal structure of $\mathcal{A}$ is just the meet structure, i.e., $\otimes=\wedge$ and $e=1$, then $\otimes_{\mathscr{X}}$ is the intersection because
\[
I \otimes_{\mathscr{X}} J=\{x \in A| \; \exists i \in I \exists j \in J \; (x \leq i \wedge j)\}= I \cap J,
\]
which is the meet of $\mathscr{X}$ and also $e_{\mathscr{X}}$ is $i(1)$ which is the top element $1_{\mathscr{X}}=i(1)=A$.
\end{rem}

\begin{thm}(Lifting Monoidal Maps)\label{LiftingMonoidalMaps} 
Let $\mathcal{A}=(A, \leq_A, \otimes_A, e_A)$ and $\mathcal{B}=(B, \leq_B, \otimes_B, e_B)$ be two monoidal posets and $f: \mathcal{A} \to \mathcal{B}$ be a lax (oplax) monoidal map. Then there exists a lax (oplax) geometric map $f_!: D(\mathcal{A}) \to D(\mathcal{B})$ such that $f_! i_A=i_Bf$, where $i_A$ and $i_B$ are the canonical embeddings of the downset completions of $\mathcal{A}$ and $\mathcal{B}$, respectively. Moreover, if both $\mathcal{A}$ and $\mathcal{B}$ have all finite joins and are distributive, and if $f: \mathcal{A} \to \mathcal{B}$ is finite join preserving, then the same holds for some map $f_!: I(\mathcal{A}) \to I(\mathcal{B})$.
\end{thm}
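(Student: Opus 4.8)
The plan is to take $f_!$ to be the downward closure of the direct image and then verify every required property by unwinding the explicit description of the monoidal structure on the downset completion given in Theorem~\ref{Completions}. For a downset $I \in D(\mathcal{A})$ I would set
\[
f_!(I) = \{y \in B \mid \exists a \in I \;(y \leq_B f(a))\},
\]
the downward closure ${\downarrow} f[I]$ of the image of $I$, which is manifestly a downset, so $f_!$ is a well-defined map $D(\mathcal{A}) \to D(\mathcal{B})$. Two properties are then immediate. Since both taking images and taking downward closures commute with unions, $f_!(\bigcup_k I_k) = \bigcup_k f_!(I_k)$, so $f_!$ preserves all joins, which is the join-preservation half of being a geometric morphism in the sense of Definition~\ref{GeometricMap}. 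And since $f$ is order preserving, $f_!(i_A(a)) = {\downarrow} f[{\downarrow} a] = {\downarrow} f(a) = i_B(f(a))$, which gives the required commutation $f_! i_A = i_B f$.

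The substantive step is the monoidal comparison, and the main point is to first rewrite both products in a form that isolates the pointwise behaviour of $f$. Using monotonicity of $\otimes_B$ and of $f$, I would reduce the two sides to
\[
f_!(I) \otimes_{D(\mathcal{B})} f_!(J) = \{y \mid \exists a \in I, b \in J \;(y \leq_B f(a) \otimes_B f(b))\},
\]
\[
f_!(I \otimes_{D(\mathcal{A})} J) = \{y \mid \exists a \in I, b \in J \;(y \leq_B f(a \otimes_A b))\}.
\]
With these descriptions the comparison collapses to the single pointwise inequality relating $f(a) \otimes_B f(b)$ and $f(a \otimes_A b)$: in the lax case this inequality yields $f_!(I) \otimes f_!(J) \subseteq f_!(I \otimes J)$, and in the oplax case it yields the reverse inclusion. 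The unit condition follows similarly from $f_!(e_{D(\mathcal{A})}) = {\downarrow} f(e_A)$ together with $f(e_A) \geq_B e_B$ (resp. $\leq_B e_B$). This establishes that $f_!$ is lax (resp. oplax) geometric and finishes the downset case.

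For the ideal case I would keep the same formula for $f_!$ and show that, under the additional hypotheses, it restricts to a map $I(\mathcal{A}) \to I(\mathcal{B})$ with the same properties. The two new facts to check both rest on $f$ preserving finite joins. First, $f_!$ sends ideals to ideals: if $I$ is an ideal then $0_B = f(0_A) \in f_!(I)$, and whenever $y \leq_B f(a)$ and $y' \leq_B f(a')$ with $a, a' \in I$ we get $y \vee y' \leq_B f(a) \vee f(a') = f(a \vee a')$ with $a \vee a' \in I$, so $f_!(I)$ is closed under finite joins. Second, $f_!$ preserves the ideal join of Theorem~\ref{Completions}: one inclusion is monotonicity, and for the other one expands $y \leq_B f(x)$ with $x \leq \bigvee_j x_j$ and uses $f(\bigvee_j x_j) = \bigvee_j f(x_j)$ to land $y$ inside $\bigvee_k f_!(I_k)$. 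Since the product and unit on $I(\mathcal{A})$ are literally those of $D(\mathcal{A})$ restricted to ideals, the lax/oplax computation above transfers verbatim. I expect the only real need for care to be bookkeeping the direction of the inequalities, so that the lax and oplax cases are not accidentally interchanged, and keeping track of where finite-join preservation of $f$ is exactly what forces $f_!$ to respect the strengthened ideal structure.
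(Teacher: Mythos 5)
Your proposal is correct and follows essentially the same route as the paper: the same formula $f_!(I)={\downarrow}f[I]$, the same verification that it is a union-preserving downset-valued map commuting with the embeddings, the same reduction of the lax/oplax comparison to the pointwise inequality between $f(a)\otimes_B f(b)$ and $f(a\otimes_A b)$, and the same two checks (ideals go to ideals, ideal joins are preserved) in the distributive case. The only difference is presentational — you rewrite both products explicitly before comparing, where the paper chases elements directly — so there is nothing further to add.
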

\begin{proof}
First let us prove the downset case. We will address the ideal case later. Define 
\[
f_!(I)=\{x \in A | \; \exists i \in I \; (x \leq_B f(i))\}.
\]
This set is clearly a downset, hence $f_!$ is well-defined. Moreover, note that 
\[
f_!(i_A(a))=\{x \in A | \; \exists i \leq_A a \; (x \leq_B f(i))\}=\{x \in A | (x \leq_B f(a))\}=i_B(f(a)).
\]
The map $f_!$ obviously preserves all unions. We have to prove that if $f$ is lax (oplax), then so is $f_!$. Assume $f$ is lax monoidal. The other case is similar. We have to prove that $i_B(e_B) \subseteq f_!(i_A(e_A))$ and $f_!(I) \otimes f_!(J) \subseteq f_!(I \otimes J)$, for any downsets $I$ and $J$ of $\mathcal{A}$. For the first, assume $x \in i_B(e_B)$, then $x \leq_B e_B \leq_B f(e_A)$. Hence, $x \in f_!(i_A(e_A))$. For the second, if $x \in f_!(I) \otimes f_!(J)$, then there are $y \in f_!(I)$ and $z \in f_!(J)$ such that $x \leq y \otimes z$. Since $y \in f_!(I)$ and $z \in f_!(J)$ there are $i \in I$ and $j \in J$ such that $y \leq f(i)$ and $z \leq f(j)$. Hence, $x \leq f(i) \otimes f(j) \leq f(i \otimes j)$, which implies $x \in f_!(I \otimes J)$.\\

For the ideal completion case, we define the same $f_!$. However, we have to check whether it is ideal and join preserving. It is an ideal because, $0 \leq f(0)$ and since $0 \in I$ we have $0 \in f_!(I)$. Moreover, if $x, y \in f_!(I)$ then there are $i, j \in I$ such that $x \leq f(i)$ and $y \leq f(j)$. Since $f$ is monotone, we have $x \vee y \leq f(i \vee j)$. Since $I$ is an ideal we have $i \vee j \in I$ and hence $x \vee y \in f_!(I)$. Furthermore, we have to check that $f_!$ is join preserving. For that matter, we have to show $f_!(\bigvee_{n \in N} I_n) = \bigvee_{n \in N} f_!(I_n)$. From right to left is easy by monotonicity of $f_!$. For the left to right, assume $x \in f_!(\bigvee_{n \in N} I_n)$. Hence, there are $i_1, \ldots, i_k \in \bigcup_{n \in N} I_n$ such that $x \leq f(i_1 \vee \ldots \vee i_k)$. Since $f$ is join preserving we have $x \leq f(i_1) \vee \ldots \vee f(i_k)$ which implies that $x \in \bigvee_{n \in N} f_!(I_n)$.
\end{proof}
\vspace{10pt}
\textbf{Upset and Filter Completions.} Using two ideal completions in an appropriate way leads to a very useful construction that we call the upset construction. The details follow. Let $\mathcal{A}=(A, \leq, \otimes, e)$ be a monoidal poset and denote the downset qunatale of $\mathcal{A}$ by $D(\mathcal{A})$ and the opposite of $\mathcal{A}$, the same structure with the reverse order, by $\mathcal{A}^{op}$. Then by the downset completion for $\mathcal{A}^{op}$, there exists a strict monoidal embedding $i : \mathcal{A}^{op} \to D(\mathcal{A}^{op})$ or equivalently $i: \mathcal{A} \to D(\mathcal{A}^{op})^{op}$. It is useful to observe that $D(\mathcal{A}^{op})$ is nothing but the poset of all upsets of $\mathcal{A}$ with the multiplication:
\[
P \otimes Q=\{x \in A | \exists y \in P \exists z \in Q \; (x \geq y \otimes z)\}.
\]
Denote this poset by $U(\mathcal{A})$. Now we use the same operation again to embed $D(\mathcal{A}^{op})^{op}$ into $D(D(\mathcal{A}^{op})^{op})$. Combining these two embeddings, we reach a strict monoidal embedding of $\mathcal{A}$ into $D(D(\mathcal{A}^{op})^{op})$ which we call the upset completion of $\mathcal{A}$. Spelling out the construction of the upset completion, the set consists of all the upsets of the upsets of $\mathcal{A}$ with the inclusion as its order and the following multiplication for any upsets of upsets $X$ and $Y$:
\[
X \otimes Y=\{P \in U(\mathcal{A}) | \exists Q \in X \exists R \in Y \; (P \supseteq Q \otimes R)\}.
\]
Moreover, the embedding is simply expressible by $i(a)=\{P \in U(\mathcal{A}) | a \in P\}$.\\

In the case that the monoidal poset is a meet semi-lattice $\mathcal{A}=(A, \leq, \wedge, 1)$, there is another construction that is called the canonical construction $\mathcal{C}(\mathcal{A})$ and an embedding $i: \mathcal{A} \to \mathcal{C}(\mathcal{A})$ that respects all finite meets. A non-empty upset of $A$ is called a filter if it is closed under all finite meets. Denote the class of all filters of $\mathcal{A}$ by $F(\mathcal{A})$ and then define $ \mathcal{C}(\mathcal{A})$ as the poset of all upsets of filters and use the same $i$ as defined before. The embedding $i: \mathcal{A} \to \mathcal{C}(\mathcal{A})$ preserves all finite meets. First note that all filters include $1$, thus
\[
i(1)=\{P \in F(\mathcal{A}) | 1 \in P\}=F(\mathcal{A}).
\]
Secondly, note that the filters are closed under meets. Hence,
\[
i(a \wedge b)=\{P \in F(\mathcal{A}) | a \wedge b \in P\}=\{P \in F(\mathcal{A})| a \in P \; \text{and} \; b \in P\}=i(a) \cap i(b).
\]
In case $\mathcal{A}$ has all finite joins and it is distributive, it is also possible to change the canonical construction so that $i$ also preserves the finite joins. The construction is as follows: A filter is called prime if it is proper and for any $a, b \in A$, the assumption $a \vee b \in P$ implies either $a \in P$ or $b \in P$. Denote the set of all prime filters by $P(\mathcal{A})$. If we change $C(\mathcal{A})$ to the poset of all upsets of $P(\mathcal{A})$ with the same $i$, then $i$ preserves both finite joins and finite meets. The reasoning for the meet is the same as before. For the joins, since prime filters are proper, we have $0 \notin P$, which implies $ i(0)=\{P \in P(\mathcal{A}) | 0 \in P\}=\emptyset$ and 
\[
i(a \vee b)=\{P \in P(\mathcal{A}) | a \vee b \in P\}=\{P \in P(\mathcal{A})| a \in P \; \text{or} \; b \in P\}=i(a) \cup i(b).
\]

\section{Intuitionism via Quantales} \label{QuantalesAndIntuitionism}
In the Introduction, we have seen that any finitely verifiable proposition can be interpreted as an open subset of a topological space. In this interpretation, the corresponding open subset captures the set of all the mental states in which the proposition actually holds. More operationally, a finitely verifiable proposition $A$ is just an \textit{observation} that reads a mental state and finds the truth value of $A$ in that state, in the same way that a physical quantity like the speed or the temperature can be seen as an observation that reads a physical state to find the value of the quantity. \\

Reading propositions as observations suggests that we silently believe in some sort of an independent objective mind. Let us assume that the creative subject observes her mental state to check the validity of a proposition. It seems that this introspection only observes a mental state and extracts some needed information from it but it does not affect the mental state at all. The situation is similar to the classical assumption that the physical observations do not affect the physical phenomenon that they are observing. It measures a quantity ideally without distorting the picture or interfering with any other observation. This may be the case when we interpret the knowledge content of a mental state as a set of propositions and the validity of a proposition as its provability. Then it is just a real factual situation and it is not important what, when and in what order we are observing the validity of the propositions. However, it is totally possible to imagine a more subjective, more dynamic and more interactive formalization of knowledge. One possible scenario to show how natural such a situation could be is the following: Interpret the knowledge content of a mental state as a set of propositions as before but change the validity from provability to immediate provability. It means that a valid proposition is either in the set or provable in one step via some given proving methods from the set. Observing a proposition in this scenario clearly affects the mental state. If a proposition holds in a mental state, it is provable in at most one step. Then since the creative subject thinks about the proposition and finds out the proof, it is totally reasonable to assume that she then modifies her knowledge to add this new proposition to the set she had before. The observation process is also interactive. In each step, there could be many one-step provable propositions and hence it could be important to choose which way she wants to proceed. This choice may change her path forever. It is also non-commutative because $A$ may be immediately provable and its presence makes $B$ also immediately provable while the proposition $B$ is not immediately provable without using $A$. Hence, proving $A$ after $B$ may not be even possible. \\

This is only one possible scenario. Now let us find a more formal way to express not only this scenario but its essential dynamic, interactive and non-commutative nature. We will begin by a toy example to be prepared to find the algebraic abstract formalization later. Let $S$ be the set of all the mental states and identify a proposition not by a subset of $S$ but by a binary relation $A \subseteq S \times S$ that includes $(s, t)$ if the proposition $A$ holds in the state $s$, its truth is verifiable in a finite number of steps and this verification changes the mental state $s$ to $t$. Using this example, we can also identify the previous static interpretation of knowledge as the non-state-changing relations, i.e., the relations like $A$ with the property that if $(s, t) \in A$ then $s=t$. These $A$'s are simply identifiable by the subset $\{s \in S | (s, s) \in A\}$ of the mental states where they are valid. This is simply our previous proposition-as-subset formalization.\\

To formalize the calculus of this new interpretation of finitely verifiable propositions, we try to provide an algebraic axiomatization reflecting the main intuitive properties of this toy example. Here again we have three main structures. The first obvious structure is the order between the propositions encoding how a proposition implies another one. This order in our toy example is the inclusion order between the binary relations. Secondly, propositions has a natural notion of composition. Philosophically speaking, for any two propositions $A$ and $B$, we can imagine $A \otimes B$ as the composition of observations, first applying $B$ and then $A$. $A \otimes B$ changes the state $s$ to $t$ if there exists a state $r$ such that $B$ holds in $s$ and maps $s$ to $r$ where $A$ holds and $A$ changes this $r$ to $t$. In our toy example composition is simply the composition of relations. Note that this composition is clearly associative and has an identity element. The identity element is simply the do-nothing observation. In our toy example it is the equality relation over $S$. Moreover, note that in the static interpretation of propositions when $A$ and $B$ are encoded by subsets $\{s \in S | (s, s) \in A\}$ and $\{s \in S | (s, s) \in B\}$, their composition $A \otimes B$ will be $\{s \in S | (s, s) \in A\} \cap \{s \in S | (s, s) \in B\}$ which is nothing but the intersection. This shows how this dynamic approach really generalizes the static topological interpretation of the Introduction.\\

Finally, let us address the finiteness condition. Note that the poset of propositions is cocomplete as we explained in the Introduction, simply because for any set $I$, if all $A_i$'s are all finitely verifiable, then their disjunction $\bigvee_{i \in I} A_i $ is also finitely verifiable. The main point is that for verifying a disjunction it is enough to verify one of them. How does a disjunction act on the states? It just combines the actions of all $A_i$'s, since observing the validity of $\bigvee_{i \in I} A_i $ is just observing one of $A_i$'s and hence it changes a states $s$ to one of the states that one of $A_i$'s may dictate. The disjunction in our toy example is just the union of relations. Moreover, note that the composition distributes over all joins because doing the observation $B$ after ``at least one of $A_i$'s" is nothing different than doing one of ``$A_i$'s before $B$". The same also goes for the other argument of the multiplication. Therefore, to make a calculus for finitely verifiable propositions in its dynamic interactive sense, we need a cocomplete monoidal poset whose multiplication distributes over all joins on both sides. This is nothing but a quantale. Note that if we collapse the monoidal structure to the meet structure as in the non-state-changing-observation interpretation dictates, then the quantale turns into a locale, the point-free version of a topological space. Interpreting locales as the calculus of non-state-changing observations were developed by Abramsky in \cite{Ab0} and \cite{Ab1} and Vickers in \cite{TopViaLogic}. This generalization to quantales has its roots even in \cite{Mul} where quantales first appeared to provide an algebraic formalization for non-commutative $C^*$-algebras. However, in its explicit form, the state-changing interpretation is developed in \cite{Ab} and has been important in the connection between the quantales and their categorical monoidal versions on the one hand and the formalization of processes and observations in computer science and quantum physics on the other. 

\section{Abstract Implications} \label{Imp}
Philosophically speaking, an implication is a conditional proposition internalizing the provability order of the poset of all propositions. Traditionally, the internalization has been implemented via Heyting implications or in a more general setting of monoidal posets via residuations for right and left multiplications. We argue that this tradition is far more restricting than what a basic internalization task demands. As we have seen already in Introduction, internalizations can take place in many different levels to internalize many different structures. For instance, if we have a meet-semilatice, the implication may internalize the basic structures of reflexivity and transitivity via the axioms $a \to a=1$ and
\[
(a \to b) \wedge (b \to c) \leq (a \to c),
\]
or it can go one step further to also internalize the finite meet structure via
\[
a \to (b \wedge c)=(a \to b) \wedge (a \to c),
\]
or in the case that the meet-semilatice has all finite joins, the join structure via
\[
(a \vee b) \to c=(a \to c) \wedge (b \to c).
\]
We propose that the minimum reasonable conditions for any internalization is the inernalization of reflexivity of the order and its transitivity. However, it does not need to be over meet-semilattices. We can use a more general setting where we only have a monoidal poset: 
\begin{dfn}\label{DefImplication}
Let $\mathcal{A}=(A, \leq, \otimes, e)$ be a monoidal poset. By an implication on $\mathcal{A}$, denoted by the symbol $\to$, we mean a function from $A^{op} \times A$ to $A$ such that it is order preserving in its both arguments and:
\begin{itemize}
\item[$(i)$]
$e \leq a \to a$,
\item[$(ii)$]
$(a \to b) \otimes (b \to c) \leq (a \to c)$,
\end{itemize}
The structure $\mathcal{A}=(A, \leq, \otimes, e, \to)$ is called a strong algebra if $\to$ is an implication. And if $\mathcal{A}=(A, \leq_A, \otimes_A, e_A, \to_A)$ and $\mathcal{B}=(B, \leq_B, \otimes_B, e_B, \to_B)$ are two strong algebras, by a strong algebra morphism we mean a strict monoidal map $f: A \to B$ that also preserves $\to$, i.e., $f(a \to_A b)=f(a) \to_B f(b)$, for any $a, b \in A$.
\end{dfn}

\begin{rem}
Based on the order preservability of the implications in their second arguments, it is possible to strengthen the axiom $(i)$ by the following more general axiom: $(i')$: If $a \leq b$ then $e \leq a \to b$.
\end{rem}

\begin{rem}
Different versions of strong algebras are defined in the literature under many different names. Usually, the definitions use lattices and the meet structure as the monoidal structure, i.e., $\otimes = \wedge$ and $e=1$. They also start with relatively more internalization axioms, including the internalization of finite meets and finite joins, as mentioned above. These algebras are the natural algebraic models for sub-intuitionistic logics. See for instance  \cite{Restall}, \cite{CJ2}, \cite{Lat}, \cite{Aliz1},  \cite{Aliz2},  \cite{Aliz3} and \cite{Aliz4} for the algebraic notions and \cite{Ard2}, \cite{BasicPropLogic}, \cite{Aliz5} and \cite{Restall} for their role in sub-intuitionistic logics. 
\end{rem}

\begin{exam}
By a left residuated algebra we mean a monoidal poset $\mathcal{A}=(A, \leq, \otimes, e)$ with a binary operation $\Rightarrow$ such that $x \otimes y \leq z$ is equivalent to $y \leq x \Rightarrow z$, for all $x, y, z \in A$. As a special case, a finitely complete and finitely cocomplete left residuated algebra with the meet structure as its monoidal structure is called a Heyting algebra. Spelling out, a Heyting algebra is a finitely complete and finitely cocomplete poset $\mathcal{H}=(H, \leq, \wedge, \vee ,1, 0)$ with a binary operation $\Rightarrow$ such that $x \wedge y \leq z$ is equivalent to $y \leq x \Rightarrow z$, for all $x, y, z \in H$. It is clear that $\Rightarrow$ in any left residuated algebra is an implication. Note that if $\mathscr{X}$ is a quantale, then $(\mathscr{X}, \Rightarrow_{\mathscr{X}})$ is a left residuated algebra where $\Rightarrow_{\mathscr{X}}$ is the canonical implication of $\mathscr{X}$. Therefore, $\Rightarrow_{\mathscr{X}}$ is also an implication.
\end{exam}

\subsection{Constructing New Implications from the Old}
\label{Constructing}
There are some simple methods to make new implications from the old. Two of these methods play an important role in our future investigations. Here we will explain them. See also \cite{Lat}. \\

\textit{The First Method}. For the first method, let $\mathcal{A}=(A, \leq, \otimes, e, \to)$ be a strong algebra and $F: A \to A$ be a monotone function (not necessarily lax or oplax). Then $\mathcal{A}=(A, \leq, \otimes, e, \to^F)$ where $a \to^F b=F(a) \to F(b)$ is a strong algebra. Since $\to$ is an implication, we have $e \leq F(a) \to F(a)$. The other axiom is trivial, because
\[
(F(a) \to F(b)) \otimes (F(b) \to F(c)) \leq (F(a) \to F(c))
\]

\textit{The Second Method}. Let $\mathcal{A}=(A, \leq, \otimes, e, \to)$ be a strong algebra and let $G: A \to A$ be a lax monoidal map. Then the structure $\mathcal{A}=(A, \leq, \otimes, e, \to_G)$ where $a \to_G b=G(a \to b)$ is also a strong algebra. The reason is the following. Since $\to$ is an implication, then $e \leq a \to a$. Since $G$ is monotone $G(e) \leq G(a \to a)$. Since $G$ is lax we have $e \leq G(e)$ which implies $e \leq G(a \to a)$. For the second axiom, since $G$ is lax and $\to$ is an implication, we have 
\[
G(a \to b) \otimes G(b \to c) \leq G((a \to b) \otimes (b \to c)) \leq G(a \to c)
\]
Later in Theorem \ref{General Representation}, we will prove a representation theorem to show that any implication is essentially the result of applying these two methods on the canonical implication of a quantale.

\begin{exam}
Let $\mathcal{H}=(H, \leq, \wedge, \vee ,1, 0, \Rightarrow)$ be a Heyting algebra. Then for some $a \in H$, consider $M_a(x)=a \wedge x$ and $J_a: H \to H$ as $J_a(x)=a \vee x$. Then since $M_a$ and $J_a$ are monotone, the following operations are implications: $[x \to^{M_a} y=(x \wedge a \Rightarrow y \wedge a)]$ and  $[x \to^{J_a} y=(x \vee a \Rightarrow y \vee a)]$.
\end{exam}

\begin{exam}
Let $X$ be a topological space, $f: X \to X$ be a continuous function and $\mathcal{O}(X)$ be the locale of all open subsets of $X$. Since $f^{-1}: \mathcal{O}(X) \to \mathcal{O}(X)$ preserves all unions, by the adjoint functor theorem, Theorem \ref{AFT}, it has a right adjoint. Call it $g: \mathcal{O}(X) \to \mathcal{O}(X)$. Since $g$ is a right adjoint, it preserves all meets. Therefore, it is lax monoidal. Therefore, the operation $U \to V=g(U \Rightarrow V)$, where $\Rightarrow$ is the Heyting implication on $\mathcal{O}(X)$ is an implication by the second construction. 
\end{exam}

\begin{dfn}
Let $\mathcal{A}=(A, \leq, \otimes, e, \to)$ be a strong algebra. It internalizes its monoidal structure if for all $a, b, c \in A$:
\[
a \to b \leq c \otimes a \to c \otimes b
\]
$\mathcal{A}$ is called closed if it has the left residuation, i.e., the operation $\Rightarrow$ such that $a \otimes b \leq c$ iff $b \leq a \Rightarrow c$, for any $a, b, c \in A$. A strong algebra internalizes the closed monoidal structure if it is closed, it internalizes the monoidal structure and for all $a, b, c \in A$:
\[
a \otimes b \to c \leq b \to (a \Rightarrow c)
\]

\end{dfn}
\begin{rem} \label{MonoidalInternalForMeet}
For strong algebras for which the monoidal structure is the meet structure, internalizing the monoidal structure simply means $a \to (b \wedge c)=(a \to b) \wedge (a \to c)$, for all $a, b, c \in A$. First note that we always have $a \to (b \wedge c) \leq (a \to b) \wedge (a \to c)$ because, $\to$ is order preserving in its second argument. Now, assume that $\mathcal{A}$ internalizes its monoidal structure, then we have 
\[
(a \to b) \leq (a \wedge a \to a \wedge b)
\;\;\;\; \text{and} \;\;\;\;
(a \to c) \leq (b \wedge a \to b \wedge c)
\]
implying
\[
(a \to b) \wedge (a \to c) \leq (a \wedge a \to a \wedge b) \wedge (b \wedge a \to b \wedge c) \leq (a \to b \wedge c)
\]
Therefore, $ (a \to b) \wedge (a \to c) \leq a \to (b \wedge c)$ and hence 
\[
a \to (b \wedge c)=(a \to b) \wedge (a \to c)
\]
Conversely, since $c \wedge a \leq c$ we have $c \wedge a \to c=1$. Moreover, $c \wedge a \leq a$ implies $(a \to b) \leq (c \wedge a) \to b$. Hence,
\[
(a \to b) \leq [(c \wedge a) \to c] \wedge [(c \wedge a) \to b]=(c \wedge a \to c \wedge b)
\]
\end{rem}

\begin{exam}
Let $X$ be a set and $f: X \to X$ be a function. Consider $P(X)$, the poset of all subsets of $X$ and $F: P(X) \to P(X)$ defined by $F(A)=f[A]$, where $f[A]$ is the image of $A$. Since $F$ is monotone, $A \to^F B=F(A) \Rightarrow F(B)$ is an implication, where $\Rightarrow$ is the usual Boolean implication on $P(X)$. In a special case, if we choose $X$ and $f$ such that $f$ is surjective and for some subsets of $X$ such as $A, B$ we have $f[A\cap B] \neq f[A] \cap f[B]$, then $\to^F$ does not internalize the monoidal structure (the meet) because,
\[
[1 \to^F (A \cap B)]=[F(1) \Rightarrow F(A \cap B)]= F(A \cap B)
\]
\[
[(1 \to^F A) \cap (1 \to^F B)]= [(F(1) \Rightarrow F(A)) \cap (F(1) \Rightarrow F(B))]=[F(A) \cap F(B)]
\]
are not equal. There are many such arrangements. For instance, take $X=\mathbb{N}$, $f(n)=\lfloor \frac{n}{2} \rfloor$ and $A=2\mathbb{N}$ and $B=2\mathbb{N}+1$ as the set of even and odd natural numbers, respectively. Then $A \cap B=\emptyset$ and hence $f[A \cap B]=\emptyset$, while $0 \in f[A] \cap f[B]$. This example provides an implication that does not internalize the monoidal structure.
\end{exam}

\section{Non-Commutative Spacetimes} \label{Non-ComSpacetime}
As we have discussed in Section \ref{QuantalesAndIntuitionism}, quantales provide a natural formalization for a more subjective notion of intuitionistic proposition. However, to address the full intuitionistic picture, along the constructibility formalized by the order, we also need to formalize the independent notion of time. How can we formalize such a temporal structure? The answer is the modality $\nabla$ that we introduced in the Introduction. Recall that $\nabla a$ must be read as the proposition ``$a$ hold at some point in the past".

\begin{dfn}\label{Spacetime}
A pair $\mathcal{S}=(\mathscr{X}, \nabla)$ is called a non-commutative spacetime if $\mathscr{X}$ is a quantale and $\nabla: \mathscr{X} \to \mathscr{X}$ is an oplax geometric map, i.e., a monotone and join preserving map such that $\nabla e \leq e$ and $\nabla (a \otimes b) \leq \nabla a \otimes \nabla b$, for all $a, b \in \mathscr{X}$. A non-commutative spacetime is called a spacetime if its monoidal structure is a meet structure. Spelling out, $\mathcal{S}=(\mathscr{X}, \nabla)$ is a spacetime if $\mathscr{X}$ is a locale and $\nabla :\mathscr{X} \to \mathscr{X}$ is just a join preserving map. Note that the oplax condition is a consequence of monotonicity of $\nabla$ and the fact that $1$ is the greatest element.
\end{dfn}

\begin{rem}
Our notion of spacetime is similar to dynamic topological spaces studied in \cite{DTL}. However, in spacetimes, we are interested in the combination of both adjoints rather than the $\Box$ as the right adjoint of $\nabla$, alone. Moreover, we depart from topological spaces and the inverse image of continuous functions to quantales and oplax join preserving maps. The latter is extremely more general than the former.
\end{rem}

\begin{exam}\label{t6-6}
Assume that $X$ is a topological space and $f: X \to X$ is a continuous function. Then $\mathcal{S}=(\mathcal{O}(X), f^{-1})$ is a spacetime where $\mathcal{O}(X)$ is the locale of the open subsets of $X$.
\end{exam}

\begin{exam}\label{KripkeToSpacetime}
By a \textit{Kripke frame}, we mean a tuple $\mathcal{K}=(W, \leq, R)$ where $(W, \leq)$ is a poset and $R \subseteq W \times W$ is a relation compatible with the order $\leq$, meaning that for all $u, v, u', v' \in W$, if $(u, v) \in R$, $u' \leq u$ and $v \leq v'$ then $(u', v') \in R$. For any Kripke frame $\mathcal{K}$, define $\nabla_{\mathcal{K}} : U(W, \leq) \to U(W, \leq)$ as $\nabla_{\mathcal{K}}(U)=\{v \in W | \exists u \in U \; R(u, v) \}$ where $U(W, \leq)$ is the poset of all upsets of $(W, \leq)$. The map $\nabla_{\mathcal{K}}$ is trivially monotone and join preserving. For the latter, note that $w \in \bigcup_{i \in I} \nabla U_i$ iff $\exists i \in I \; (w \in \nabla U_i)$ iff 
\[
\exists i \in I \exists u \in W \; ((u, w) \in R \; \wedge (u \in U_i)) 
\]
\[
\text{iff} \;\; \exists u \in W \;(u \in \bigcup_{i \in I} U_i \wedge (u, w) \in R) \;\;  \text{iff} \;\;  w \in \nabla (\bigcup_{i \in I} U_i).
\]
Therefore, $\mathcal{S}_{\mathcal{K}}=(U(W, \leq), \nabla_{\mathcal{K}})$ is a spacetime. Note that if we take equality $=_W$ for $\leq$, it transform any usual Kripke frame $(W, R)$ with arbitrary $R$ to a spacetime. Philosophically speaking, in an arbitrary Kripke frame, $W$ can be interpreted as the set of the creative subject's mental states, $\leq$ as an encoding of the order on the knowledge content of states and $R$ as an encoding of the order of time on the states. Note that by this interpretation, the compatibility condition between $\leq$ and $R$ is nothing but the compatibility between knowledge and time.
\end{exam}

\begin{exam}
Let $X$ be a set, $f: X \to X$ be a function and $P(X \times X)$ be the quantale of all binary relations over $X$. Consider $f_*: P(X \times X) \to P(X \times X)$ defined as $f_*(R)=\{(f(x), f(y)) | x, y \in X \; \text{and} \; (x, y) \in R\}$. By Example \ref{LiftingFromSetsToQuantales}, the map $f_*$ is an oplax geometric morphism and hence $(P(X, X), f_*)$ is a non-commutative spacetime.
\end{exam}

\begin{exam}
Let $\mathcal{M}=(M, \otimes, e)$ be a monoid, $I(\mathcal{M})$ be the quantale of its ideals and $f: M \to M$ be an endomorphism. Consider $f_*: I(\mathcal{M}) \to I(\mathcal{M})$ defined as $f_*(I)=M f[I]M=\{m \otimes f(i) \otimes n | i \in I , m, n \in M\}$. By Example \ref{LiftingFromMonToQuantales}, the map $f_*$ is an oplax geometric morphism and hence $(I(\mathcal{M}), f_*)$ is a non-commutative spacetime.
\end{exam}

Any non-commutative spacetime has its own canonical implication. It is constructible via the second method we have explained in Subsection \ref{Constructing}. This implication is nothing but the usual implication, delayed by the passage of time. The main point of these canonical implications is the full adjunctions that they present. This means that the structure is complete enough to fully capture the behaviour of the implication. Throughout the rest of this paper, we will see how this completeness makes the non-commutative spacetimes and their implications extremely well-behaved.

\begin{thm}\label{t6-8}
Let $\mathcal{S}=(\mathscr{X}, \nabla)$ be a non-commutative spacetime. Then there exists an implication $\to_{\mathcal{S}}: \mathscr{X}^{op} \times \mathscr{X} \to \mathscr{X}$ such that 
\[
a \otimes \nabla b \leq c \;\; \text{iff} \;\; b \leq a \to_{\mathcal{S}} c
\]
\end{thm}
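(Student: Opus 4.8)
The plan is to realize $\to_{\mathcal{S}}$ as the composite of two adjunctions: the residuation of $\otimes$ and the right adjoint of $\nabla$. First I would produce the right adjoint of $\nabla$. Since $\mathscr{X}$ is a quantale its order is cocomplete, hence complete, and $\nabla$ preserves all joins; so by the Adjoint Functor Theorem for posets (Theorem \ref{AFT}), $\nabla$ has a right adjoint, which I will denote $\Box : \mathscr{X} \to \mathscr{X}$. By the definition of adjunction this gives, for all $b, x \in \mathscr{X}$,
\[
\nabla b \leq x \quad \text{iff} \quad b \leq \Box x.
\]

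Next I would simply define $a \to_{\mathcal{S}} c := \Box(a \Rightarrow c)$, where $\Rightarrow$ is the canonical implication of the quantale $\mathscr{X}$ from Remark \ref{CanonicalImplication}, characterized by $a \otimes y \leq c$ iff $y \leq a \Rightarrow c$. The required biconditional is then a two-step chain: for all $a, b, c \in \mathscr{X}$,
\[
a \otimes \nabla b \leq c \;\; \text{iff} \;\; \nabla b \leq a \Rightarrow c \;\; \text{iff} \;\; b \leq \Box(a \Rightarrow c) = a \to_{\mathcal{S}} c,
\]
where the first equivalence is the residuation of $\otimes$ applied with $y = \nabla b$, and the second is the adjunction $\nabla \dashv \Box$ obtained above.

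It then remains to check that $\to_{\mathcal{S}}$ is genuinely an implication in the sense of Definition \ref{DefImplication}. Rather than verifying axioms $(i)$ and $(ii)$ by hand, I would recognize $\to_{\mathcal{S}}$ as an instance of the Second Method of Subsection \ref{Constructing} applied to the strong algebra $(\mathscr{X}, \Rightarrow)$ with the monotone map $\Box$: indeed $a \to_{\mathcal{S}} c = \Box(a \Rightarrow c)$ is precisely $a \to_{\Box} c$ in that notation. The Second Method requires $\Box$ to be lax monoidal, and this is the single point where the structural hypothesis on $\nabla$ is consumed: since $\nabla$ is oplax monoidal (being an oplax geometric map) and $\Box$ is its right adjoint, Theorem \ref{MonoidalAdjoint} delivers exactly that $\Box$ is lax monoidal. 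Hence the Second Method yields that $\to_{\mathcal{S}}$ is an implication, finishing the argument.

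I expect no genuine obstacle here; the whole content is the clean composition of two adjunctions. The only items demanding a little care are the bookkeeping of variance — $\Rightarrow$ is antitone in its first argument and monotone in its second, while $\Box$ is monotone, so $\to_{\mathcal{S}}$ has the correct variance as a map $\mathscr{X}^{op} \times \mathscr{X} \to \mathscr{X}$ — and the recognition that the existence and the lax-monoidality of $\Box$ are exactly what the completeness of the quantale (via Theorem \ref{AFT}) and the oplax condition on $\nabla$ (via Theorem \ref{MonoidalAdjoint}) are there to supply.
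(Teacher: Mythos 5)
Your proposal is correct and follows exactly the paper's own argument: obtain $\Box$ as the right adjoint of $\nabla$ via the adjoint functor theorem, set $a \to_{\mathcal{S}} c = \Box(a \Rightarrow c)$, chain the two adjunctions for the biconditional, and invoke Theorem \ref{MonoidalAdjoint} together with the Second Method to see that $\to_{\mathcal{S}}$ is an implication. No differences worth noting.
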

\begin{proof}
Since $\mathscr{X}$ is a quantale and $\nabla: \mathscr{X} \to \mathscr{X}$ is a join preserving monotone map, by the adjoint functor theorem, Theorem \ref{AFT}, it has a right adjoint $\Box: \mathscr{X} \to \mathscr{X}$. Now, define $a \to_{\mathcal{S}} b=\Box (a \Rightarrow b)$ where $\Rightarrow$ is the canonical implication of $\mathscr{X}$. This map has the desired property since
\[
a \otimes \nabla b \leq c \;\; \text{iff} \;\; \nabla b \leq  a \Rightarrow c \;\; \text{iff} \;\; b \leq \Box(a \Rightarrow c)
\]
Moreover, note that $\Box$ is the right adjoint of $\nabla$.  Therefore, since $\nabla$ is oplax, by Theorem \ref{MonoidalAdjoint}, $\Box$ must be lax monoidal and hence by the second construction method for implications, the operation $\to_{\mathcal{S}}$ must be an implication.
\end{proof} 

It is worth defining an elementary version of the previous adjunction situation. This is similar to how Heyting algebras provide an elementary version of locales:

\begin{dfn}
Let $(A, \leq, \otimes, e)$ be a monoidal poset and $\nabla: A \to A$ and $\to : A^{op} \times A \to A$ be two order preserving functions. Then the structure $\mathcal{A}=(A, \leq, \otimes, e, \nabla, \to)$ is called a temporal algebra if for any $a, b, c \in A$, we have $a \otimes \nabla b \leq c$ iff $b \leq a \to c$. A temporal algebra is called distributive if $(A, \leq, \otimes, e)$ is a distributive monoidal poset. A temporal algebra is called a left residuated algebra if $\nabla$ is the identity map. A strong algebra $(A, \leq, \otimes, e, \to)$ is called a reduct of a temporal algebra if there exists $\nabla : A \to A$ such that $(A, \leq, \otimes, e, \nabla, \to)$ is a temporal algebra. And finally, if $\mathcal{A}=(A, \leq_A, \otimes_A, e_A, \nabla_A, \to_A)$ and $\mathcal{B}=(B, \leq_B, \otimes_B, e_B, \nabla_B, \to_B)$ are two temporal algebras, by a temporal algebra morphism we mean a strict monoidal map $f: A \to B$ that also preserves $\nabla$ and $\to$, i.e., $f\nabla_A=\nabla_B f$ and $f((-) \to_A (-))=f(-) \to_B f(-)$.
\end{dfn}
Interpreting a temporal algebra $\mathcal{A}=(A, \leq, \otimes, e, \nabla, \to)$ as the world of propositions and  $\nabla a$  as ``$a$ happened at some point in the past", $a \to b$ must be interpreted as  
``$a$ implies $b$ at any point in the future." Therefore, it is reasonable to assume that the combination $\nabla( \to )$ forgets the temporal delay and provides a usual left residuation. This is almost true. It is almost, because $\nabla$ is the approximate inverse of $\to$, namely its adjoint rather than its real inverse and hence $\nabla (\to )$ can not be the real identity but its best approximation. To make the adjunction pair a real inverse pair, it is enough to move from $A$ to $\nabla [A]=\{\nabla a | a \in A\}$, as we will see in the next theorem. (See also Remark \ref{SemiInverse}.) In this sense we can claim that a temporal algebra (with meet structure for the monoidal part) is a refined version of the usual left residual algebra (Heyting algebra).
\begin{thm}\label{AlgebraicTranslation}
Let $\mathcal{A}=(A, \leq, \otimes, e, \nabla, \to)$ be a temporal algebra and $\nabla$ preserves all finite multiplications. Then, the structure $\nabla \mathcal{A}=(\nabla [A], \leq, \otimes, e, \nabla, \to')$ is a left residuated algebra where $\nabla[A]=\{\nabla x | x \in A\}$ and $a \to' b=\nabla (a \to b)$, for any $a, b \in \nabla [A]$. Moreover, if $\mathcal{A}$ is finitely complete (finitely cocomplete), so is $\nabla \mathcal{A}$. The same is also true for completeness. Finally, if the monoidal structure of $\mathcal{A}$ is the meet structure, $\nabla \mathcal{A}$ is a Heyting algebra.
\end{thm}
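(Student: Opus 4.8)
\section*{Proof proposal for Theorem \ref{AlgebraicTranslation}}

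The plan is to extract two adjunctions from the single temporal axiom and then run everything off them. First, for each fixed $a$ the axiom says that $a\otimes\nabla(-)$ is left adjoint to $a\to(-)$. Specializing to $a=e$ gives $\nabla b\leq c$ iff $b\leq e\to c$, so setting $\Box:=e\to(-)$ we obtain a bona fide adjunction $\nabla\dashv\Box$. This needs no completeness hypothesis, only the axiom. Next I would record that $\nabla\mathcal{A}$ is a monoidal sub-poset of $\mathcal{A}$: since $\nabla$ preserves finite multiplications, $\nabla a\otimes\nabla b=\nabla(a\otimes b)\in\nabla[A]$, so $\nabla[A]$ is closed under $\otimes$, and $e=\nabla e$ (the nullary multiplication) lies in $\nabla[A]$ and is still the unit. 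Thus $(\nabla[A],\leq,\otimes,e)$ is a monoidal poset on which $\to'$ is defined by $a\to' b=\nabla(a\to b)$, which indeed lands in $\nabla[A]$.

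The heart of the argument is that $\to'$ is the left residuation on $\nabla[A]$, i.e.\ $x\otimes y\leq z$ iff $y\leq\nabla(x\to z)$ for $x,y,z\in\nabla[A]$. For the forward direction I write $y=\nabla w$; from $x\otimes\nabla w\leq z$ the defining axiom (with $a=x$) yields $w\leq x\to z$, and applying the monotone $\nabla$ gives $y=\nabla w\leq\nabla(x\to z)=x\to' z$. For the converse, I multiply $y\leq\nabla(x\to z)$ on the left by $x$ to get $x\otimes y\leq x\otimes\nabla(x\to z)$, and then invoke the counit inequality $x\otimes\nabla(x\to z)\leq z$, which is exactly the $fg\leq\mathrm{id}$ clause of Remark \ref{SemiInverse} applied to $x\otimes\nabla(-)\dashv x\to(-)$. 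This establishes that $\nabla\mathcal{A}$ is a left residuated algebra.

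For the (co)completeness transfer I would pass through the operator $j:=\nabla\Box$. By Remark \ref{SemiInverse} applied to $\nabla\dashv\Box$ we have $j\leq\mathrm{id}$, $\nabla\Box\nabla=\nabla$, hence $jj=\nabla\Box\nabla\Box=\nabla\Box=j$, so $j$ is a monotone, deflationary, idempotent interior operator whose fixed points are precisely $\nabla[A]$ (for $x=\nabla p$, $j(x)=\nabla\Box\nabla p=\nabla p=x$, and any fixed point is visibly of the form $\nabla(\Box x)$). Joins are easy: being a left adjoint, $\nabla$ preserves all existing joins, so $\bigvee_i\nabla x_i=\nabla(\bigvee_i x_i)\in\nabla[A]$ is the join computed in $\mathcal{A}$ and hence the join in the sub-poset; this handles finite cocompleteness and the join half of completeness. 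Meets are the delicate point and the main obstacle, since $\nabla$ need not preserve meets: given $S\subseteq\nabla[A]$ with meet $m=\bigwedge_A S$, I claim $j(m)$ is the meet of $S$ in $\nabla[A]$. It is a lower bound because $j(m)\leq j(s)=s$ for $s\in S$, and it is greatest because any lower bound $\ell\in\nabla[A]$ satisfies $\ell\leq m$, whence $\ell=j(\ell)\leq j(m)$. This gives finite completeness and the meet half of completeness.

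Finally, for the Heyting case, when $\otimes=\wedge$ and $e=1$ the closure of $\nabla[A]$ under $\otimes$ means $\nabla[A]$ is closed under the ambient binary meet, so its meets coincide with those of $\mathcal{A}$ (equivalently $j$ fixes them), and one checks $\Box 1=1$ and $j(1)=\nabla 1=e$, so the monoidal structure of $\nabla\mathcal{A}$ is genuinely its own meet structure with top $e$. Combining the inherited finite completeness and cocompleteness with the residuation $\to'$ already established, $\nabla\mathcal{A}$ satisfies the definition of a Heyting algebra. I expect the meet computation via $j$ to be the only genuinely non-routine step; the rest is a careful but direct unwinding of the two adjunctions.
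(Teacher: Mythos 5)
Your proposal is correct and follows essentially the same route as the paper: the residuation is verified by writing $y=\nabla w$ and applying the defining adjunction in one direction and the counit $x\otimes\nabla(x\to z)\leq z$ in the other, joins are inherited because the left adjoint $\nabla$ preserves them, and meets are recovered as $\nabla\Box(\bigwedge S)$. Your packaging of the meet argument through the idempotent deflationary operator $j=\nabla\Box$ is only a cosmetic reorganization of the paper's computation, and your explicit treatment of the Heyting case is a harmless addition the paper leaves implicit.
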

\begin{proof}
Since $\nabla$ preserves the monoidal structure, the set $\nabla [A]$ is closed under all finite multiplications. Therefore, the only thing to prove is the adjunction $a \otimes (-) \dashv (a \to' (-))$, for any $a \in \nabla [A]$. It means $a \otimes b \leq c$ iff $b \leq \nabla (a \to c)$, for all $a, b, c \in \nabla [A]$. From left to right, since $b \in \nabla [A]$, there exists $b' \in A$ such that $b=\nabla b'$. Since $a \otimes \nabla b' \leq c$ we have $b' \leq a \to c$ which implies $b=\nabla b' \leq \nabla (a \to c)$. From right to left, if $b \leq \nabla (a \to c)$ then $a \otimes b \leq a \otimes \nabla (a \to c) \leq c$.\\

Note that if $A$ has also all (finite) joins or all (finite) meets, so does $\nabla [A]$. For joins, since $\nabla$ has a right adjoint and preserves all joins, $\nabla [A]$ is closed under all (finite) joins. Therefore, $\nabla [A]$ has also all (finite) joins. For meet, the situation is a bit more complex. We will address the binary meet. The rest is similar. For any $a, b \in \nabla [A]$, we claim that $\nabla \Box (a \wedge b) \in \nabla [A]$ is the meet of $a$ and $b$ in $\nabla \mathcal{A}$. Because, $\nabla \Box (a \wedge b) \leq (a \wedge b) \leq a$ and similarly for $b$ we also have $\nabla \Box (a \wedge b) \leq b$ . If for some $c \in \nabla [A]$ we have $c \leq a$ and $c \leq b$, then $c \leq a \wedge b$ and hence $\nabla \Box c \leq \nabla \Box (a \wedge b)$. Since $c \in \nabla [A]$, there exists $c' \in A$ such that $c=\nabla c'$. Hence, $\nabla \Box c=\nabla \Box \nabla c'=\nabla c'=c$. Thus, $c \leq \nabla \Box (a \wedge b)$. 
\end{proof}

\begin{dfn}
Let $\mathcal{S}=(\mathscr{X}, \nabla_{\mathcal{S}})$ and $\mathcal{T}=(\mathscr{Y}, \nabla_{\mathcal{T}})$ be two non-commutative spacetimes. By a geometric map $f: \mathcal{S} \to \mathcal{T} $, we mean a strict geometric morphism $f: \mathscr{X} \to \mathscr{Y}$ such that $f \nabla_{\mathcal{S}} = \nabla_{\mathcal{T}} f$. A geometric map is called logical if it also preserves the implication, i.e., $f [(-) \to_{\mathcal{S}} (-)]=f(-) \to_{\mathcal{T}} f(-)$. 
\end{dfn}

\begin{exam}
Let $\mathcal{K}=(W, =_W, R)$ and $\mathcal{L}=(V, =_V, S)$ be two Kripke frames. A map $p:W \to V$ is called a p-morphism if $(u, v) \in R$ implies $(p(u), p(v)) \in S$, for any $u, v \in W$ and for any $w \in W$ and $s, t \in V$ if $p(w)=s$ and $(s, t) \in S$, then there exists $u \in W$ such that $p(u)=t$ and $(w, u) \in R$. A map $p:W \to V$ is a p-morphism iff $p^{-1}: \mathcal{S}_{\mathcal{L}^{op}} \to \mathcal{S}_{\mathcal{K}^{op}}$ is a geometric morphism, where $\mathcal{K}^{op}=(W, R^{op})$ and $(v, u) \in R^{op}$ iff $(u, v) \in R$ and similarly for $\mathcal{L}$. We only prove the left to right direction. The other direction is similar. First note that $p^{-1}$ preserves all unions and all finite intersections. Therefore, the only thing we have to prove is the preservability of $\nabla$, i.e., $p^{-1} \nabla_{\mathcal{L}^{op}}=\nabla_{\mathcal{K}^{op}} p^{-1}$. Let $U$ be a subset of $V$. Then if $u \in \nabla_{\mathcal{K}^{op}} p^{-1}(U)$, then there exists $w \in W$ such that $(w, u) \in R^{op}$ or equivalently $(u, w) \in R$ and $p(w) \in U$. Since $p$ is a p-morphism we have $(p(u), p(w)) \in S$ which means $(p(w), p(u)) \in S^{op}$. Hence, $p(u) \in \nabla_{\mathcal{L}^{op}} (U)$ which implies $u \in p^{-1} \nabla_{\mathcal{L}^{op}}(U)$. Conversely, if $u \in p^{-1} \nabla_{\mathcal{L}^{op}}(U)$, we have $p(u) \in \nabla_{\mathcal{L}^{op}}(U)$ from which, there exists $v \in U$ such that $(v, p(u)) \in S^{op}$ or equivalently $(p(u), v) \in S$. Since $p$ is a p-morphism, there exists $w \in W$ such that $p(w)=v$ and $(u, w) \in R$. Hence, $(w, u) \in R^{op}$ from which, $u \in \nabla_{\mathcal{K}^{op}}(p^{-1}(U))$. 
\end{exam}

\begin{thm}\label{Logical}
Let $\mathcal{S}$ and $\mathcal{T}$ be two non-commutative spacetimes and $f: \mathcal{S} \to \mathcal{T}$ be a geometric morphism with a left adjoint $f_!$. Then $f$ is logical iff $f_!(f b \otimes \nabla_{\mathcal{T}} a)=b \otimes \nabla_{\mathcal{S}} f_! a$.
\end{thm}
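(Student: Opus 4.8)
The plan is to prove both directions at once by reducing each side of the claimed equivalence to a single \emph{Yoneda-style} statement inside the poset $\mathscr{X}$, exploiting that in a poset an element is completely determined by the set of elements below it (antisymmetry). The only genuine inputs are the two families of adjunctions in play: the hypothesis $f_! \dashv f$ (with $f: \mathscr{X} \to \mathscr{Y}$ and $f_!: \mathscr{Y} \to \mathscr{X}$), and the defining adjunctions $(-) \otimes \nabla_{\mathcal{S}} (-) \dashv (-) \to_{\mathcal{S}} (-)$ and $(-) \otimes \nabla_{\mathcal{T}} (-) \dashv (-) \to_{\mathcal{T}} (-)$ supplied by Theorem \ref{t6-8}. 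Everything else is formal unwinding.

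First I would record that $f$ is logical precisely when $f(b \to_{\mathcal{S}} c) = f(b) \to_{\mathcal{T}} f(c)$ for all $b, c \in \mathscr{X}$, and that, by antisymmetry, this equality of two elements of $\mathscr{Y}$ holds iff they dominate the same elements: for every $a \in \mathscr{Y}$,
\[
a \leq f(b \to_{\mathcal{S}} c) \quad\iff\quad a \leq f(b) \to_{\mathcal{T}} f(c).
\]
Next I would compute each side of this biconditional by sliding the adjunctions across. On the left, $f_! \dashv f$ gives $a \leq f(b \to_{\mathcal{S}} c)$ iff $f_! a \leq b \to_{\mathcal{S}} c$, and the defining adjunction of $\to_{\mathcal{S}}$ turns this into $b \otimes \nabla_{\mathcal{S}} f_! a \leq c$. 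On the right, the defining adjunction of $\to_{\mathcal{T}}$ gives $a \leq f(b) \to_{\mathcal{T}} f(c)$ iff $f(b) \otimes \nabla_{\mathcal{T}} a \leq f(c)$, and then $f_! \dashv f$ turns this into $f_!(f(b) \otimes \nabla_{\mathcal{T}} a) \leq c$. Hence $f$ is logical iff, for all $b, c \in \mathscr{X}$ and all $a \in \mathscr{Y}$,
\[
b \otimes \nabla_{\mathcal{S}} f_! a \leq c \quad\iff\quad f_!(f(b) \otimes \nabla_{\mathcal{T}} a) \leq c.
\]

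Finally I would collapse the universally quantified $c$. For fixed $b \in \mathscr{X}$ and $a \in \mathscr{Y}$, both $b \otimes \nabla_{\mathcal{S}} f_! a$ and $f_!(f(b) \otimes \nabla_{\mathcal{T}} a)$ are elements of $\mathscr{X}$, and the condition ``$u \leq c$ iff $v \leq c$ for all $c \in \mathscr{X}$'' forces $u = v$ (instantiate at $c = u$ and at $c = v$ and use antisymmetry), the converse being immediate. Therefore the displayed biconditional holding for all $c$ is equivalent to $f_!(f(b) \otimes \nabla_{\mathcal{T}} a) = b \otimes \nabla_{\mathcal{S}} f_! a$, which is exactly the stated projection formula; reading the chain of equivalences in both directions yields the honest ``iff''.

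I expect no serious obstacle here: the entire content is the two adjunctions, and the only thing demanding care is the bookkeeping — the directions $f:\mathscr{X}\to\mathscr{Y}$, $f_!:\mathscr{Y}\to\mathscr{X}$ and the types $b \in \mathscr{X}$, $a \in \mathscr{Y}$ — so that at each step the correct adjunction is applied on the correct side. It is worth flagging in the writeup that the argument never invokes $f \nabla_{\mathcal{S}} = \nabla_{\mathcal{T}} f$ nor strict monoidality of $f$ beyond what is needed for $f_!$ to exist; the equivalence is purely a residuation identity, which is why antisymmetry alone closes both directions.
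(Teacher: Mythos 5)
Your proposal is correct and follows essentially the same route as the paper: both proofs chain the three adjunctions $f_! \dashv f$, $b \otimes \nabla_{\mathcal{S}}(-) \dashv b \to_{\mathcal{S}} (-)$ and $fb \otimes \nabla_{\mathcal{T}}(-) \dashv fb \to_{\mathcal{T}} (-)$ to show that $f_!(fb \otimes \nabla_{\mathcal{T}} a) \leq c$ and $b \otimes \nabla_{\mathcal{S}} f_! a \leq c$ are respectively equivalent to $a \leq fb \to_{\mathcal{T}} fc$ and $a \leq f(b \to_{\mathcal{S}} c)$, then close both directions by antisymmetry. Your version merely makes explicit the Yoneda-style collapse of the quantifier over $c$ that the paper leaves implicit.
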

\begin{proof}
Using the adjunctions $x \otimes \nabla_{\mathcal{T}}(-) \dashv x \to_{\mathcal{T}} (-)$, $y \otimes \nabla_{\mathcal{S}} (-) \dashv y \to_{\mathcal{S}} (-)$ and $f_! \dashv f$ we have 
\[
f_!(fb \otimes \nabla_{\mathcal{T}} a) \leq  c \;\; \text{iff} \;\;  fb \otimes \nabla_{\mathcal{T}} a \leq  fc \;\; \text{iff} \;\;  a \leq fb \to_{\mathcal{T}} fc
\]
and
\[
b \otimes \nabla_{\mathcal{S}} f_! a \leq c  \;\; \text{iff} \;\; f_!a \leq b \to_{\mathcal{S}} c \;\; \text{iff} \;\; a \leq f(b \to_{\mathcal{S}} c)
\]
These equivalences imply exactly what we wanted. Because, if $f_!(f b \otimes \nabla_{\mathcal{T}} a)=b \otimes \nabla_{\mathcal{S}} f_! a$, then the left hand sides of the above lines are equivalent which implies the equivalence of the right hand sides from which $fb \to_{\mathcal{T}} fc=f(b \to_{\mathcal{S}} c)$. The converse is similar.
\end{proof}
Sometimes, it would be reasonable to investigate the pure spatial behaviour of a non-commutative space, meaning the properties that hold for \textit{all} possible time structures or more formally \textit{all} possible $\nabla$'s over a fixed space. The following corollary provides a method to transfer these properties along certain geometric morphisms. We will use this corollary when we have a suitable syntax for non-commutative spacetimes to formally address what we mean by a ``property".
\begin{cor}\label{TransferringNabla}
Let $\mathcal{S}=(\mathscr{X}, \nabla_{\mathcal{S}})$ be a non-commutative spacetime, $\mathscr{Y}$ be a quantale and $f: \mathscr{X} \to \mathscr{Y}$ be a strict geometric embedding  with a left adjoint $f_!$. Then there exists $\nabla$ on $\mathscr{Y}$ such that $\mathcal{T}=(\mathscr{Y}, \nabla)$ is a non-commutative spacetime and $f: \mathcal{S} \to \mathcal{T}$ is a logical morphism.
\end{cor}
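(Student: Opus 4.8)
The plan is to define the temporal modality on $\mathscr{Y}$ by conjugating $\nabla_{\mathcal{S}}$ through the adjunction, namely $\nabla := f \nabla_{\mathcal{S}} f_!$, and then to verify the three things demanded: that $\nabla$ is an oplax geometric map, that $f$ intertwines the two modalities, and that $f$ preserves the induced implication. The engine behind every step is the identity $f_! f = \mathrm{id}_{\mathscr{X}}$, which I would derive first. From $f_! \dashv f$ and Remark \ref{SemiInverse} we have $f_! f(x) \leq x$ and $y \leq f f_!(y)$; instantiating the second at $y = f(x)$ gives $f(x) \leq f(f_! f(x))$, while monotonicity of $f$ applied to the first gives $f(f_! f(x)) \leq f(x)$, so $f(f_! f(x)) = f(x)$, and since $f$ is an embedding this forces $f_! f(x) = x$. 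I would also record two reusable facts: as a left adjoint $f_!$ preserves all joins, and by the lax case of Theorem \ref{MonoidalAdjoint} applied to the strict (hence lax) map $f$, its left adjoint $f_!$ is oplax monoidal, i.e. $f_!(e_{\mathscr{Y}}) \leq e_{\mathscr{X}}$ and $f_!(a \otimes b) \leq f_! a \otimes f_! b$.

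With $\nabla = f \nabla_{\mathcal{S}} f_!$ in hand, I would next check that $\mathcal{T} = (\mathscr{Y}, \nabla)$ is a non-commutative spacetime. Monotonicity and join-preservation are immediate, since $\nabla$ is a composite of monotone join-preserving maps ($f_!$ as a left adjoint, $\nabla_{\mathcal{S}}$ by hypothesis, and $f$ as a geometric morphism). For the unit, $f_!(e_{\mathscr{Y}}) \leq e_{\mathscr{X}}$ gives $\nabla_{\mathcal{S}} f_!(e_{\mathscr{Y}}) \leq \nabla_{\mathcal{S}} e_{\mathscr{X}} \leq e_{\mathscr{X}}$ using monotonicity and oplaxness of $\nabla_{\mathcal{S}}$, and applying the strict monoidal $f$ yields $\nabla e_{\mathscr{Y}} \leq f(e_{\mathscr{X}}) = e_{\mathscr{Y}}$. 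For the product, I would chain the oplax inequalities: $f_!(a \otimes b) \leq f_! a \otimes f_! b$, then applying $\nabla_{\mathcal{S}}$ and its oplaxness gives $\nabla_{\mathcal{S}} f_!(a \otimes b) \leq \nabla_{\mathcal{S}} f_! a \otimes \nabla_{\mathcal{S}} f_! b$, and finally $f$, being strict monoidal, turns the outer product into $\nabla a \otimes \nabla b$, giving $\nabla(a \otimes b) \leq \nabla a \otimes \nabla b$.

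It remains to show that $f: \mathcal{S} \to \mathcal{T}$ is logical. The intertwining condition is automatic: $\nabla f = f \nabla_{\mathcal{S}} f_! f = f \nabla_{\mathcal{S}}$ by $f_! f = \mathrm{id}$, so $f$ is a geometric morphism between the two spacetimes carrying the left adjoint $f_!$, and the hypotheses of Theorem \ref{Logical} are met. I would then verify that theorem's criterion $f_!(f b \otimes \nabla_{\mathcal{T}} a) = b \otimes \nabla_{\mathcal{S}} f_! a$. Substituting $\nabla_{\mathcal{T}} a = \nabla a = f \nabla_{\mathcal{S}} f_! a$ and using that $f$ is strict monoidal, $f b \otimes \nabla_{\mathcal{T}} a = f b \otimes f \nabla_{\mathcal{S}} f_! a = f(b \otimes \nabla_{\mathcal{S}} f_! a)$; applying $f_!$ and invoking $f_! f = \mathrm{id}$ collapses the left-hand side to exactly $b \otimes \nabla_{\mathcal{S}} f_! a$, confirming the criterion and hence that $f$ is logical.

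I expect the only genuinely load-bearing step to be the identity $f_! f = \mathrm{id}_{\mathscr{X}}$; everything else is disciplined bookkeeping of lax/oplax inequalities along the composite. The embedding hypothesis on $f$ is precisely what upgrades the unit/counit inequalities of the adjunction into that equality, and without it the conjugate $\nabla$ would neither intertwine correctly nor satisfy the clean logical criterion of Theorem \ref{Logical}. So the conceptual crux is recognizing that a strict geometric embedding splits through its left adjoint, after which the definition $\nabla = f \nabla_{\mathcal{S}} f_!$ is essentially forced and the remaining verifications are routine.
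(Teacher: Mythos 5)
Your proposal is correct and follows essentially the same route as the paper: the same definition $\nabla = f\nabla_{\mathcal{S}}f_!$, the same derivation of $f_!f=\mathrm{id}$ from the adjunction plus the embedding hypothesis, the same appeal to Theorem \ref{MonoidalAdjoint} for oplaxness of $f_!$, and the same verification of the criterion of Theorem \ref{Logical}. The only difference is that you spell out the oplax inequalities for the composite explicitly where the paper simply notes that a composite of oplax maps is oplax.
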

\begin{proof}
Define $\nabla=f \nabla_{\mathcal{S}} f_!$. Since $f_!$ is a left adjoint and both $f$ and $\nabla_{\mathcal{S}}$ preserves all joins, the operator $\nabla$ is also join preserving. Moreover, since $f$ is strict monoidal, its left adjoint, $f_!$ is oplax, by Theorem \ref{MonoidalAdjoint}. Therefore, $\nabla$ as a composition of three oplax monoidal maps is also oplax. To prove the geometricity of $f: (\mathscr{X}, \nabla_{\mathcal{S}}) \to (\mathscr{Y}, \nabla)$, since $f_! \dashv f$, by Remark \ref{SemiInverse}, $ff_!f=f$. Since $f$ is an embedding we have $f_! f= id$. Therefore, $ \nabla f = f \nabla_{\mathcal{S}} f_! f = f \nabla_{\mathcal{S}}$. Hence $f: (\mathscr{X}, \nabla_{\mathcal{S}}) \to (\mathscr{Y}, \nabla)$ is geometric. To prove it is logical, by Theorem \ref{Logical} we have to show that $f_!(\nabla a \otimes f b)=f_!(f \nabla_{\mathcal{S}} a \otimes f b)$. Since $f$ is strict monoidal, the right hand side is equivalent to $f_!f (\nabla_{\mathcal{S}} a \otimes b)$. Since $f_!f= id$, the latter is equivalent to $\nabla_{\mathcal{S}} f_! a \otimes b$. Hence, the geometric map $f: (\mathscr{X}, \nabla_{\mathcal{S}}) \to (\mathscr{Y}, \nabla)$ is logical. 
\end{proof}

Corollary \ref{TransferringNabla} is useful in case the quantales are the open posets of topological spaces and the space for $\mathscr{X}$ is an Alexandroff space. Recall that a topological space is Alexandroff if any arbitrary intersection of its open subsets is also open.

\begin{cor}\label{TransferringNablaAlex}
Let $X$ be a topological space, $Y$ be an Alexandroff space, $f:X \to Y$ be a continuous surjection and $\mathcal{S}=(\mathcal{O}(Y), \nabla_{Y})$ be a spacetime. Then there exists $\nabla_{X}: \mathcal{O}(X) \to \mathcal{O}(X)$ such that $\mathcal{T}=(\mathcal{O}(X), \nabla_{X})$ is a spacetime and $f^{-1}: \mathcal{S} \to \mathcal{T}$ is a logical morphism.
\end{cor}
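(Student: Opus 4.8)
The plan is to deduce this from Corollary \ref{TransferringNabla}. Put $\mathscr{X}=\mathcal{O}(Y)$ with $\nabla_{\mathcal{S}}=\nabla_Y$, put $\mathscr{Y}=\mathcal{O}(X)$, and let the map called $f$ in that corollary be $f^{-1}:\mathcal{O}(Y)\to\mathcal{O}(X)$. Both $\mathcal{O}(X)$ and $\mathcal{O}(Y)$ are locales, hence quantales, and since $f$ is continuous an earlier example already shows that $f^{-1}$ is a strict geometric morphism. Thus only two things remain to check before the corollary applies: that $f^{-1}$ is an embedding, and that it has a left adjoint $f_!$.

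For the embedding, I would use surjectivity of $f$: if $f^{-1}(U)\subseteq f^{-1}(V)$ and $y\in U$, choose $x$ with $f(x)=y$; then $x\in f^{-1}(U)\subseteq f^{-1}(V)$, so $y=f(x)\in V$, whence $U\subseteq V$. This is exactly the order-reflecting condition of Definition \ref{MonoidalMap}, so $f^{-1}$ is a strict geometric embedding.

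The production of the left adjoint is the heart of the matter and the only place the Alexandroff hypothesis is used. By the adjoint functor theorem, Theorem \ref{AFT}, since $\mathcal{O}(Y)$ is complete it suffices to show that $f^{-1}$ preserves all meets. In the Alexandroff space $Y$ an arbitrary intersection of opens is open, so the meet of a family $\{U_i\}$ in $\mathcal{O}(Y)$ is the genuine intersection $\bigcap_i U_i$; then $f^{-1}\!\left(\bigwedge_i U_i\right)=f^{-1}\!\left(\bigcap_i U_i\right)=\bigcap_i f^{-1}(U_i)$, and this set, being the preimage of an open set, is open in $X$ and therefore already equals the meet $\bigwedge_i f^{-1}(U_i)$ in $\mathcal{O}(X)$. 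Hence $f^{-1}$ preserves all meets and a left adjoint $f_!$ exists. I expect this meet-preservation to be the main obstacle: were $Y$ not Alexandroff, $\bigcap_i U_i$ could fail to be open, the meet in $\mathcal{O}(Y)$ would be its interior, and $f^{-1}$ would in general not commute with it. Note that a strict geometric morphism only guarantees preservation of \emph{finite} meets, so the Alexandroff assumption is genuinely doing work here.

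With both conditions verified, Corollary \ref{TransferringNabla} furnishes an oplax geometric map $\nabla=f^{-1}\nabla_Y f_!$ on $\mathcal{O}(X)$ for which $f^{-1}:\mathcal{S}\to\mathcal{T}$ is logical. Finally, to see that $\mathcal{T}=(\mathcal{O}(X),\nabla_X)$ is a spacetime rather than merely a non-commutative one, note that $\mathcal{O}(X)$ is a locale, so its monoidal structure is the meet; by the remark in Definition \ref{Spacetime} a monotone join-preserving operator over a locale automatically satisfies the oplax conditions. Setting $\nabla_X=\nabla$ completes the argument.
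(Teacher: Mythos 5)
Your proposal is correct and follows essentially the same route as the paper's proof: use the Alexandroff hypothesis to show $f^{-1}$ preserves arbitrary meets (so Theorem \ref{AFT} yields a left adjoint $f_!$), use surjectivity of $f$ to show $f^{-1}$ is an embedding, and then invoke Corollary \ref{TransferringNabla}. You simply spell out the details (why intersections compute the meets, why order-reflection follows from surjectivity) that the paper leaves implicit.
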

\begin{proof}
Since the space $Y$ is Alexandroff, $\mathcal{O}(Y)$ is closed under all intersections. Therefore, since $f^{-1}$ preserves arbitrary intersections it also preserves arbitrary meets. Hence, by the adjoint functor theorem, Theorem \ref{AFT}, it has a left adjoint $f_!$. Moreover, note that $f: X \to Y$ is surjective which means that $f^{-1}: \mathcal{O}(Y) \to \mathcal{O}(X)$ is an embedding. Hence, it is enough to use Corollary \ref{TransferringNabla}. 
\end{proof}

\section{Representation Theorems} \label{Rep}
In this section we will present some quantale-based representations for different classes of strong algebras. The main motive is embedding an abstract strong algebra in a quantale in a way that the implication presents a possible well-behaved left adjunction. We call this process \textit{resolving the implication}. In a technical sense, these left adjoints make the implications easier to handle as it is usual all over mathematics. However, resolutions have a very philosophical role, as well. We know that adjunctions are the algebraic term for the usual proof theoretical situation in which we have a pair of introduction and elimination rules for a logical connective that we try to capture. For instance, think about the intuitionistic implication and its natural deduction rules. Following Gentzen, a connective is fully captured if it enjoys a pair of introduction and elimination rules. In this sense, resolving an implication is an attempt to fully identify an abstract implication as a logical connective.\\

Having all said, resolving all the implications is unfortunately impossible. We will explain the reason later in this section. We will also see some necessary and partially sufficient conditions to make resolutions possible. But first let us begin by a general yet weak resolution-type result. We will prove that any strong algebra is embeddable in a quantale equipped with an implication. The implication is not necessarily a non-commutative spacetime implication but it is a substitution of it. We can think of the implication as the result of the application of the two construction methods that we explained before, applied on the canonical implication of the quantale.\\

Let $\mathcal{A}=(A, \leq, \otimes, e, \to)$ be a strong algebra. A priory, there is no reason to assume that the structure $\mathcal{A}$ has the power (enough elements or structure) to resolve the implication and find an adjunction-type situation. However, if we extend the domain to also include the \textit{relative elements}, meaning the monotone functions $A^{op} \to A$, then we can provide the following characterization for the implication:
\[
c \leq a \to b \;\;\; \text{iff} \;\;\; (x \to a) \otimes c \leq (x \to b)
\]
where $x$ is a variable and the right-hand side consists of the functions for which the order and the monoidal structure are both defined pointwise. The reason is simple. From left to right, note that
\[
c \leq a \to b \;\;\; \text{implies} \;\;\; (x \to a) \otimes c \leq (x \to a) \otimes (a \to b) \leq (x \to b)
\]
and from right to left, it is enough to put $x=a$ to have
\[
c =e \otimes c \leq (a \to a) \otimes c \leq a \to b
\]
Note that while this adjunction-type characterization handles all the elements of $A$, it can not handle the functions that it adds. To solve this problem we simply need infinitely many of such variables:
\begin{thm}\label{General Representation}
For any strong algebra $\mathcal{A}$ there exists a non-commutative spacetime $\mathcal{S}=(\mathscr{X}, \nabla)$, a monotone map $F: \mathscr{X} \to \mathscr{X}$ and a strict monoidal embedding $i: \mathcal{A} \to \mathscr{X}$ such that $i(a \to_{\mathcal{A}} b)= F(i(a)) \to_{\mathcal{S}} F(i(b))$. 
\end{thm}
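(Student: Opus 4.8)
The plan is to turn the adjunction-type characterization displayed just before the statement into a genuine residuation inside a product quantale that uses one coordinate for every ``variable'' $x \in A$. First I would fix the downset completion $\mathcal{S}_0 = D(\mathcal{A})$ from Theorem \ref{Completions}, together with its strict monoidal embedding $j : \mathcal{A} \to \mathcal{S}_0$ and its canonical implication $\Rightarrow_0$ (Remark \ref{CanonicalImplication}). A short computation in $\mathcal{S}_0$ shows that for principal downsets $j(p) \Rightarrow_0 j(q) = \{k \in A : p \otimes k \leq q\}$. I would then take $\mathscr{X} = \prod_{x \in A} \mathcal{S}_0$, the product quantale, with pointwise order, multiplication, unit and joins; it is a quantale because every operation is computed coordinatewise and $\mathcal{S}_0$ is one, and its canonical implication is the pointwise $\Rightarrow_0$.

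Next I would supply the three pieces of data. The embedding $i : \mathcal{A} \to \mathscr{X}$ sends $c$ to the constant family $(j(c))_{x \in A}$; it is a strict monoidal embedding since $j$ is. The monotone map $F$ is forced on the image of $i$ by the prescription $F(i(a))_x = j(x \to a)$, and I would extend it to all of $\mathscr{X}$ by $F(u)_x = \bigvee\{\, j(x \to a) : a \in u_x \,\}$, which is manifestly monotone and agrees with the required values on $i(\mathcal{A})$ because $a' \leq a$ forces $x \to a' \leq x \to a$. For the temporal structure I would define $\Box : \mathscr{X} \to \mathscr{X}$ by $\Box(u) = (\bigwedge_{x \in A} u_x)_{y \in A}$, the constant family at the coordinatewise meet. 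This $\Box$ preserves all meets, so by the Adjoint Functor Theorem \ref{AFT} it has a left adjoint $\nabla$; being a left adjoint, $\nabla$ preserves joins, and since $\Box$ is easily seen to be lax monoidal (the inequality $(\bigwedge_x u_x) \otimes (\bigwedge_x v_x) \leq \bigwedge_x (u_x \otimes v_x)$ is immediate from monotonicity of $\otimes$), Theorem \ref{MonoidalAdjoint} makes $\nabla$ oplax. Hence $\mathcal{S} = (\mathscr{X}, \nabla)$ is a non-commutative spacetime and, by Theorem \ref{t6-8}, its implication is $u \to_{\mathcal{S}} v = \Box(u \Rightarrow v)$.

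It then remains to verify the identity. Since $\Rightarrow$ is pointwise, $(F i(a) \Rightarrow F i(b))_x = j(x \to a) \Rightarrow_0 j(x \to b) = \{k \in A : (x \to a) \otimes k \leq (x \to b)\}$, and applying $\Box$ yields the constant family at $\bigcap_{x \in A} \{k : (x \to a)\otimes k \leq (x \to b)\} = \{k \in A : \forall x\;\; (x\to a)\otimes k \leq (x\to b)\}$. By the single-variable characterization, whose two directions use axioms $(ii)$ and $(i)$ of an implication exactly as in the discussion preceding the theorem, this set equals $\{k : k \leq a \to b\} = j(a \to b)$. Therefore $F i(a) \to_{\mathcal{S}} F i(b) = \Box(F i(a) \Rightarrow F i(b)) = i(a \to b)$, as required. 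This exhibits $\to_{\mathcal{A}}$ as the first method applied with $F$ to the spacetime implication, which is itself the second method (via $\Box$) applied to the canonical $\Rightarrow$, in accordance with Subsection \ref{Constructing}.

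The main obstacle, and the step I would set up most carefully, is getting the variances to cooperate: the universal quantifier over $x$ in the characterization must be realized as a meet over coordinates, and this meet has to be produced by the \emph{right} adjoint $\Box$ of the temporal map rather than by $\nabla$ itself, while at the same time $\Box$ must be lax monoidal so that $\nabla$ qualifies as an oplax geometric map. Everything else — that $\mathscr{X}$ is a quantale, that $i$ is a strict monoidal embedding, that the canonical implication of the product is computed coordinatewise, and that $F$ extends monotonically — is routine once this adjunction is oriented correctly.
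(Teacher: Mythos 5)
Your proof is correct, but it realizes the ``relative element'' idea by a genuinely different construction than the paper's. The paper forms the monoidal poset $E$ of finitely supported monotone functions $(\Pi_{n \in \mathbb{N}} A^{op}) \to A$, embeds $\mathcal{A}$ as the constants, passes to the downset completion $D(E)$, and obtains $\nabla$ by lifting a coordinate shift $s$ and $F$ from the map $f \mapsto (x_0 \to f(\langle x_{n+1}\rangle))$; the identity $i(a \to b) = F(i(a)) \to_{\mathcal{S}} F(i(b))$ is then proved through the adjunction characterization $J \subseteq i(a\to b)$ iff $F(i(a)) \otimes \nabla J \subseteq F(i(b))$, where the infinite supply of fresh variables is exactly what permits the substitution $x_0 = a$ without disturbing $s(f)$. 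You instead take the product quantale $\prod_{x \in A} D(\mathcal{A})$, so that the universal quantifier over $x$ in the characterization $c \leq a \to b$ iff $\forall x\, [(x\to a)\otimes c \leq (x \to b)]$ becomes a literal meet over coordinates, produced by your meet-preserving $\Box(u) = (\bigwedge_x u_x)_y$ whose left adjoint $\nabla$ is join preserving and oplax by Theorems \ref{AFT} and \ref{MonoidalAdjoint}; the verification then reduces to the coordinatewise computation $j(p) \Rightarrow_0 j(q) = \{k : p \otimes k \leq q\}$ followed by the two instantiations using axioms $(ii)$ and $(i)$, and the variable-capture issue never arises because the quantified $k$ is a genuine element of $A$ rather than a function of the variables. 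Your route buys a shorter, purely computational verification of the single required identity and dispenses with the shift map and the finite-support bookkeeping; the paper's route keeps everything inside one downset completion of a single monoid of polynomial-like elements and proves the stronger adjunction-style equivalence for arbitrary downsets $J$, which is closer in spirit to a term-model construction. The supporting claims you leave as routine (that the product of quantales is a quantale with pointwise canonical implication, that your $F$ is monotone and restricts correctly on $i(\mathcal{A})$, and that $\Box$ is lax monoidal) all check out.
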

\begin{proof}
Define $E$ as the set of all monotone functions $f: (\Pi_{n \in \mathbb{N}} A^{op}) \to A$ with finite support, i.e., all order preserving maps that depend only on some finitely many of their arguments. Define $\leq_E$ as the pointwise order on $E$ and use $\otimes_E$ and $e_E$ to represent the pointwise monoidal structure of $E$. Then the structure $\mathcal{E}=(E, \leq_E, \otimes_E, e_E)$ is clearly a monoidal poset. Define $j: \mathcal{A} \to \mathcal{E}$ by mapping any element $a \in A$ to the constant function with the value $a$. Since the structure of $\mathcal{E}$ is defined pointwise, it is clear that $j$ is a strict monoidal embedding.\\
Define the shift map $r: \Pi_{n \in \mathbb{N}} A^{op} \to \Pi_{n \in \mathbb{N}} A^{op}$ by $r(\langle a_n \rangle_{n=0}^{\infty})=\langle a_{n+1} \rangle_{n=0}^{\infty}$. Then define $s: E \to E$ as the coordinate shift map induced by $r$, i.e., $s(f)=f \circ r$. Spelling out, $s$ sends the function $f(\langle x_n \rangle_{n=0}^{\infty})$ to $f(\langle x_{n+1} \rangle_{n=0}^{\infty})$. This map is clearly strictly monoidal. Moreover, define $l: E \to E$ mapping $f(\langle x_n \rangle_{n=0}^{\infty}) \mapsto (x_0 \to f(\langle x_{n+1} \rangle_{n=0}^{\infty}))$. Now use the downset completion on $\mathcal{E}=(E, \leq_E, \otimes_E, e_E)$ to construct our $\mathscr{X}$. Let $k: \mathcal{E} \to \mathscr{X}$ be the canonical strict monoidal embedding from the downset completion. Define
\[
\nabla I = s_!=\{f \in E | \exists g \in I \; (f \leq_{E} s(g))\}
\]
and
\[
F(I) = \{f \in E | \exists g \in I \; [f \leq_{E} l(s(g))]\}
\]
The map $F$ is clearly monotone and mapping downsets to downsets. By Theorem \ref{LiftingMonoidalMaps}, $\nabla$ preserves the joins and it is oplax because $s$ is oplax. Therefore, $\mathcal{S}=(\mathscr{X}, \nabla)$ is a non-commutative spacetime. We claim that $i=kj: \mathcal{A} \to \mathscr{X}$ is the strict monoidal embedding that we are looking for. The only thing to prove is that $i(a \to_{\mathcal{A}} b)= F(i(a)) \to_{\mathcal{S}} F(i(b))$. To prove that, we show 
\[
J \subseteq i(a \to b) \;\; \text{iff} \;\; F(i(a)) \otimes \nabla J \subseteq F(i(b)), \;\;\;\;\; (*)
\]
for any downset $J$ of $E$. First to simplify the proof, note that for any $c \in A$
\[
f \in F(i(c)) \;\;\; \text{iff} \;\;\; f \leq (x_0 \to c)
\]
The reason is that $f \in F(i(c))$ iff there exists a function $f'\in i(c)$ such that $f \leq (x_0 \to s(f'))$. This is equivalent to $f \leq (x_0 \to c)$.\\
Now to prove $(*)$, for left to right, if $J \subseteq i(a \to b)$ and $f \in F(i(a)) \otimes \nabla J$ by the definition of the multiplication on downsets, there exist $g \in F(i(a))$ and $h \in \nabla J$ such that $f \leq g \otimes_E h$. By the above point, since $g \in F(i(a))$ we have $g \leq (x_0 \to a)$. By definition of $\nabla$ there exists $h' \in J $ such that $h \leq s(h')$. Since $h' \in J \subseteq i(a \to b)$ we have $h' \leq a \to b$ and hence $h \leq s(a \to b)= a \to b$. Therefore, $g \otimes_E h \leq (x_0 \to a) \otimes_E (a \to b) \leq x_0 \to b$. Therefore, by the above-mentioned point we have $g \otimes_E h \in F(i(b))$ and since $f \leq g \otimes_E h$ and $F(i(b))$ is a downset, we have $f \in F(i(b))$. \\
For the converse, assume  $F(i(a)) \otimes \nabla J \subseteq F(i(b))$ and we want to show that $J \subseteq i(a \to b)$. Assume $f \in J$. Then by the definition of $\nabla$, we have $s(f) \in \nabla J$. Moreover, by the above mentioned point we have $(x_0 \to a) \in F(i(a))$. Hence, $(x_0 \to a) \otimes_E s(f) \in F(i(a)) \otimes_{\mathscr{X}} \nabla J$. Therefore, $(x_0 \to a) \otimes_E s(f) \in F(i(b))$. Hence, $(x_0 \to a) \otimes_E s(f) \leq (x_0 \to b)$. Since the order of $E$ is pointwise, put $x_0=a$ and keep the other variables intact. Since $s(f)$ does not depend on $x_0$, it does not change after the substitution. Hence, $(a \to a) \otimes_E s(f) \leq (a \to b)$. Since $e \leq a \to a$, we have $s(f) \leq a \to b=s(a \to b)$. Since $s$ is an embedding, $f \leq a \to b$ and hence $f \in i(a \to b)$. 
\end{proof}
Although, the previous theorem provides a weak resolution for any abstract implication, it can only resolve it up to a factor $F$ which breaks the full adjunction situation. This $F$ is inevitable, simply because it is impossible to embed any implication into a non-commutative spacetime. The reason is that for any non-commutative spacetime $\mathcal{S}=(\mathscr{X}, \nabla)$, its implication, $\to_{\mathcal{S}}$, internalizes the closed monoidal structure of $\mathcal{S}$, i.e., for all $a, b, c \in \mathscr{X}$ we have
\[
a \to_{\mathcal{S}} b \leq c \otimes a \to_{\mathcal{S}} c \otimes b
\]
because by the associativity and the adjunction
\[
c \otimes a \otimes \nabla (a \to_{\mathcal{S}} b) \leq c \otimes b
\]
Therefore, if we seek an embedding into a non-commutative spacetime we have to restrict our domain to the implications that internalize their monoidal structure. Unfortunately, we do not know if this necessary condition is also sufficient. However, if the multiplication has left residuation and the implication internalizes the closed monoidal structure, we will have the following representation. Here our main ingredient is the ternary frames introduced first in \cite{Meyer} as the Kripke models for the relevant logics.
\begin{thm}
For any strong algebra $\mathcal{A}$ whose multiplication has left residual and $\mathcal{A}$ internalizes its closed monoidal structure, there exists a non-commutative spacetime $\mathcal{S}=(\mathscr{X}, \nabla)$ and a strong algebra embedding $i:\mathcal{A} \to \mathcal{S}$.
\end{thm}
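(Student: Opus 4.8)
The plan is to first reduce the problem to a purely modal question and then realize the modality inside the complex algebra of a ternary frame. The key algebraic observation is that, in the presence of the left residual $\Rightarrow$ and the closed-monoidal internalization axioms, the implication is already determined by $\Rightarrow$ together with the single unary operation $\Box_0 := e \to (-)$; precisely, for all $a,b$,
\[
a \to b = e \to (a \Rightarrow b) = \Box_0(a \Rightarrow b).
\]
The inequality $a \to b \le e \to (a \Rightarrow b)$ is the axiom $a \otimes b \to c \le b \to (a \Rightarrow c)$ read with $a,e,b$ in place of $a,b,c$ (using $a \otimes e = a$), while the reverse inequality follows from the monoidal-internalization axiom $e \to (a\Rightarrow b) \le a\otimes e \to a \otimes(a\Rightarrow b) = a \to (a\otimes(a\Rightarrow b))$ together with $a \otimes (a\Rightarrow b)\le b$ and monotonicity of $\to$ in its second argument. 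A short computation with the same two axioms shows that $\Box_0$ is lax monoidal: $e \le \Box_0 e$ is axiom $(i)$, and $\Box_0 a \otimes \Box_0 b \le \Box_0(a\otimes b)$ follows from $e \to b \le a \to (a\otimes b)$ and transitivity $(ii)$. This is exactly the shape dictated by Theorem \ref{t6-8}: if we can embed $\mathcal{A}$ into a quantale $\mathscr{X}$ preserving $\otimes,e,\le$ and the residual $\Rightarrow$ (so that $\Rightarrow_{\mathscr{X}}$ restricts to $\Rightarrow$), and equip $\mathscr{X}$ with an oplax geometric $\nabla$ whose right adjoint $\Box$ restricts to $\Box_0$ along $i$, then $i(a)\to_{\mathcal{S}} i(b) = \Box(i(a)\Rightarrow_{\mathscr{X}} i(b)) = \Box\, i(a\Rightarrow b) = i(\Box_0(a\Rightarrow b)) = i(a\to b)$, and $i$ is a strong algebra embedding.

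Second, I would build the carrier quantale from a ternary (Routley--Meyer) frame $(W,R)$ canonically associated to $\mathcal{A}$, whose points come from $A$ (or its filters) and whose ternary relation encodes the graph of $\otimes$ and the residual $\Rightarrow$. Working inside the complex algebra of such a frame --- a collection of (Galois-stable) subsets of $W$ ordered by inclusion --- gives a \emph{complete} quantale $\mathscr{X}$ in which fusion and residuation are the concrete relational operations, and the map $i(a) = \{w : a \in w\}$ is a strict monoidal embedding that preserves $\Rightarrow$; the preservation of $\otimes$ and of the residual is the standard content of the frame representation, and faithfulness follows from the usual ``enough points'' (Lindenbaum/prime-filter) argument. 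Because $\mathscr{X}$ is built as a powerset-like complete lattice, any join-preserving operator defined from a binary relation on $W$ automatically has a right adjoint, which is the decisive advantage of the frame over the bare downset completion of Theorem \ref{Completions} (there the naive lift of $\Box_0$ fails to preserve meets and so is not a genuine right adjoint).

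Third, I would introduce a binary accessibility relation $T \subseteq W\times W$ and set $\nabla X = \{w : \exists u\in X,\ T(u,w)\}$, so that $\nabla$ is join preserving by construction and its right adjoint is $\Box X = \{w : \forall u\,(T(w,u)\Rightarrow u\in X)\}$. The relation $T$ is to be chosen so that it tracks the operation $\Box_0 = e\to(-)$ under the embedding; the oplaxness of $\nabla$ (the conditions $\nabla e_{\mathscr{X}}\le e_{\mathscr{X}}$ and $\nabla(X\otimes Y)\le \nabla X\otimes\nabla Y$) will then follow from the laxness of $\Box_0$ via Theorem \ref{MonoidalAdjoint}, and $\mathcal{S}=(\mathscr{X},\nabla)$ will be a non-commutative spacetime to which Theorem \ref{t6-8} applies. \textbf{The main obstacle} is the exact compatibility of $T$ with the ternary relation $R$: one must define $T$ so that the induced $\Box$ satisfies $\Box\, i(c) = i(e\to c)$ for every $c$ (not merely an inequality), and simultaneously so that $\nabla$ respects fusion. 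This is precisely the point at which the closed-monoidal internalization axiom $a\otimes b \to c \le b \to (a \Rightarrow c)$ is indispensable: it is the algebraic shadow of the frame condition linking $T$ and $R$, and verifying that this single axiom forces both the exact intertwining $\Box\, i = i\,\Box_0$ and the oplax interaction of $\nabla$ with $\otimes$ is where the real work of the proof lies. Once that compatibility is established, combining it with the reduction of the first paragraph and Theorem \ref{t6-8} yields the desired strong algebra embedding $i : \mathcal{A}\to\mathcal{S}$.
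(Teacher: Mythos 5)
Your first paragraph is correct and is a genuinely nice observation: the identities $a \to b = (e \to (a \Rightarrow b))$ and the laxness of $\Box_0 = e \to (-)$ do follow from the two internalization axioms exactly as you compute, and this cleanly explains why the sought-after implication must have the shape $\Box(\cdot \Rightarrow \cdot)$ of Theorem \ref{t6-8}. But the remaining two paragraphs are a plan, not a proof, and the plan defers precisely the two steps that constitute the theorem. First, you need an embedding $i$ of $\mathcal{A}$ into a quantale that preserves the residual \emph{on the nose}, $i(a \Rightarrow b) = i(a) \Rightarrow_{\mathscr{X}} i(b)$; you assert this is "the standard content of the frame representation," but no frame is actually specified, and the appeal to a "Lindenbaum/prime-filter argument" imports machinery (primeness, distributivity) that is not available for a bare monoidal poset. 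Second, and more seriously, you need a binary relation $T$ whose induced $\Box$ satisfies the exact intertwining $\Box\, i(c) = i(e \to c)$ while the induced $\nabla$ is oplax with respect to the relational fusion --- and you state explicitly that verifying this "is where the real work of the proof lies." That is correct, and it is work you have not done; without it there is no theorem.

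For comparison, the paper's proof sidesteps your first gap entirely: it never asks the embedding to preserve $\Rightarrow$. It works in $\mathscr{X} = U(U(\mathcal{A}))$ with $i(a) = \{P \in U(\mathcal{A}) \mid a \in P\}$, defines the ternary relation $(P,Q,R) \in \mathcal{R}$ iff ($a \to b \in P$ and $a \in Q$ imply $b \in R$), and then defines \emph{both} the implication $X \to Y$ and the modality $\nabla X = \{R \mid \exists P, Q\ [(P,Q,R) \in \mathcal{R},\ P \in X,\ e \in Q]\}$ directly from $\mathcal{R}$ (note: $\nabla$ comes from the ternary relation with the middle coordinate existentially bound to contain $e$, not from an independently posited binary $T$). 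The preservation $i(a \to b) = i(a) \to i(b)$ is then checked by hand, and the adjunction $Y \otimes \nabla(-) \dashv Y \to (-)$ together with oplaxness of $\nabla$ is established via a three-part combinatorial claim whose proof is exactly where the closed-monoidal internalization axioms $a \to b \leq c \otimes a \to c \otimes b$ and $a \otimes b \to c \leq b \to (a \Rightarrow c)$ are consumed. If you want to salvage your outline, the missing content is the analogue of that claim; your reduction in the first paragraph would then let you phrase the conclusion as an application of Theorem \ref{t6-8}, but it does not substitute for the verification.
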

\begin{proof}
Recall that $U(\mathcal{A})$ is the poset of all upsets of $\mathcal{A}$ with inclusion. Define $\mathcal{R}$ as a ternary relation over $U(\mathcal{A})$ as: $(P, Q, R) \in \mathcal{R}$ iff for all $a, b \in A$ if $a \to b \in P$ and $a \in Q$ then $b \in R$.  Note that the relation $\mathcal{R}$ is order-reversing in its first two arguments while it is order preserving in its third argument. Consider $\mathscr{X}=U(U(\mathcal{A}))$ and $i: \mathcal{A} \to \mathscr{X}$ by defining $i(a)=\{P \in U(\mathcal{A}) | \ a \in P\}$. As we have observed in Preliminaries, this $i$ is clearly a strict monoidal embedding. Our strategy is first defining an implication on $\mathscr{X}$ and showing how $i$ maps the implication of $\mathcal{A}$ to this implication and then finding an oplax $\nabla$ such that $X \otimes \nabla (-) \dashv (X \to (-))$ for any $X \in \mathscr{X}$.\\

For any upsets of $U(\mathcal{A})$ such as $X$ and $Y$ define $X \to Y$ as:
\[
\{P \in U(\mathcal{A}) | \; \forall Q, R \in U(\mathcal{A}), \; \text{if} \; (P, Q, R) \in \mathcal{R} \; \text{and} \; Q \in X \; \text{then} \; R \in Y\}
\]
Since $\mathcal{R}$ is order-reversing in its first argument, $X \to Y$ is an upset.  To prove that $i$ maps the implication of $\mathcal{A}$ into this implication, i.e., $i(a \to b)=i(a) \to i(b)$, we need to address the following two directions:\\
For $i(a \to b) \subseteq i(a) \to i(b)$, if $P \in i(a \to b)$ then $a \to b \in P$. To show that $P \in i(a) \to i(b)$, assume for some $Q, R \in U(\mathcal{A)}$ we have $(P, Q, R) \in \mathcal{R}$ and $Q \in  i(a)$. Then by the definition of $i$ we have $a \in Q$ and since $a \to b \in P$, by the definition of $\mathcal{R}$ we have $b \in R$ implying $R \in i(b)$. Conversely, for $i(a) \to i(b) \subseteq i(a \to b)$, if $P \in i(a) \to i(b)$ define $Q=\{x \in A| x \geq a\}$ and $R=\{y \in A | a \to y \in P\}$. We have $(P, Q, R) \in \mathcal{R}$ because if $x \to y \in P$ and $x \in Q$ then $x \geq a$ and hence $x \to y \leq a \to y$ which implies $a \to y \in P$. Therefore, by definition $y \in R$. Finally, Since $a \in Q$ we have $Q \in i(a)$ and since $(P, Q, R) \in \mathcal{R}$ we have $R \in i(b)$ which implies $b \in R$. By definition of $R$ it means that $a \to b \in R$.\\
To complete the proof, we have to introduce an oplax $\nabla$ and show that for any upsets of $U(\mathcal{A})$ such as $X, Y, Z$ we have $X \subseteq Y \to Z$ iff $Y \otimes \nabla X \subseteq Z$. Define $\nabla$ as:
\[
\nabla X=\{R \in U(\mathcal{A})| \; \exists P, Q \in  U(\mathcal{A}) \; [(P, Q, R) \in \mathcal{R}, (P \in X) \; \text{and} \; (e \in Q)] \}
\]
Since $\mathcal{R}$ is order-preserving in its third argument, $\nabla$ is an upset. To prove the adjunction condition and the fact that it is oplax, we need a claim first:
\begin{itemize} 
\item[$(i)$] 
For  any upsets $P, Q, R, S \in U(\mathcal{A})$, if $(P, Q, R) \in \mathcal{R}$ then $(P, S \otimes Q, S \otimes R) \in \mathcal{R}$.
\item[$(ii)$]
For any upsets $P, Q, R \in U(\mathcal{A})$, if $(P, Q, R) \in \mathcal{R}$ then $(P, E, Q \Rightarrow R) \in \mathcal{R}$ where $E=\{x \in A | x \geq e\}$ and $ \Rightarrow $ is the canonical implication of the quantale $U(\mathcal{A})$.
\item[$(iii)$]
For any upsets $P_1, P_2, Q, R \in U(\mathcal{A})$, if $(P_1 \otimes P_2, Q, R) \in \mathcal{R}$ and $e \in Q$, then there are upsets $Q_1, Q_2, R_1, R_2 \in U(\mathcal{A})$ such that $e \in Q_1$, $e \in Q_2$, $(P_1, Q_1, R_1) \in \mathcal{R}$, $(P_2, Q_2, R_2) \in \mathcal{R}$ and $R_1 \otimes R_2 \subseteq R$.
\end{itemize}
\textit{Proof of the Claim.} 
For $(i)$, if $x \to y \in P$ and $x \in S \otimes Q $ then there are $z \in S$, $w \in Q$ such that $x \geq z \otimes w $. Since $x \geq z \otimes w $ and $x \to y \in P$ we have $ z \otimes w \to y \in P$. Since, $\mathcal{A}$ internalizes its closed monoidal structure we have 
\[
z \otimes w \to x \leq w \to (z \Rightarrow_A x)
\]
where $\Rightarrow_A$ is the left residual of multiplication in $\mathcal{A}$. Since $P$ is an upset, $w \to (z \Rightarrow_A x) \in P$. Since $(P, Q, R) \in \mathcal{R}$ and $w \in Q$ we have $z \Rightarrow_A x \in R$. Since $z \in S$ and $z \otimes (z \Rightarrow_A x) \leq x$ we have $x \in S \otimes R$.\\
For $(ii)$, assume $x \to y \in P$ and $x \geq e$ then we have to show that $y \in Q \Rightarrow R$. Equivalently, it means $Y \subseteq Q \Rightarrow R$ where $Y=\{x \in A | x \geq y\}$. The latter is equivalent to $Q \otimes Y \subseteq R$ because $\Rightarrow$ is the left residual in $U(\mathcal{A})$. Assume $z \in Q \otimes Y$. Therefore, there exist $w \in Q$ and $u \geq y$ such that $z \geq w \otimes u$ implying $z \geq w \otimes y$. Since $\mathcal{A}$ internalizes its monoidal structure, we have 
\[
x \to y \leq w \otimes x \to w \otimes y
\]
Hence, $w \otimes x \to w \otimes y \in P$. Since $e \leq x$ we have $w= w \otimes e \leq w \otimes x$. By $w \in Q$ we have $w \otimes x \in Q$. Since $(P, Q, R) \in \mathcal{R}$ we have $w \otimes y \in R$. Since $z \geq w \otimes y$ we conclude $z \in R$ that completes the proof.\\
To prove $(iii)$, if $(P_1 \otimes P_2, Q, R) \in \mathcal{R}$ and $e \in Q$, then define $Q_1=Q_2=\{x \in A | x \geq e\}$ and $R_i=\{x \in A | e \to x \in P_i\}$ for $i \in \{1, 2\}$. By definition it is clear that $(P_1, Q_1, R_1) \in \mathcal{R}$ and $(P_2, Q_2, R_2) \in \mathcal{R}$, because if $u \to v \in P_i$ and $u \geq e$ then $e \to v \in P_i$ which by definition means $v \in R_i$. Finally, to prove $R_1 \otimes R_2 \subseteq R$, assume $z \in R_1 \otimes R_2$. Therefore, there are $x \in R_1$ and $y \in R_2$ such that $z \geq x \otimes y$. Since $x \in R_1$ and $y \in R_2$ we have $e \to x \in P_1$ and $e \to y \in P_2$. Therefore, $(e \to x) \otimes (e \to y) \in P_1 \otimes P_2$. Since $\mathcal{A}$ internalizes its monoidal structure we have 
\[
e \to y \leq (x \otimes e \to x \otimes y)=(x \to x \otimes y)
\]
Therefore,
\[
(e \to x) \otimes (e \to y) \leq (e \to x) \otimes (x \to x \otimes y) \leq e \to x \otimes y
\]
Hence, $e \to x \otimes y \in P_1 \otimes P_2$. Since $e \in Q$ and $(P, Q, R) \in \mathcal{R}$ we have $x \otimes y \in R$ and since $z \geq x \otimes y$ we have $z \in R$. \\
\qed \\

Now let us come back to prove that $\nabla$ is a join preserving oplax map. We have to show that $\nabla i(e) \subseteq i(e)$ and for any upsets of $U(\mathcal{A})$ such as $X, Y$ we have $\nabla (X \otimes Y) \subseteq \nabla X \otimes \nabla Y $. For the first one, if $R \in \nabla i(e)$, by definition there exist upsets $P$ and $Q$ such that $(P, Q, R) \in \mathcal{R}$, $e \in Q$ and $P \in i(e)$. Therefore, $e \in P$. Since $e \leq e$, we have $e \leq e \to e$. Since $P$ is an upset we have $e \to e \in P$. Then since $e \in Q$ and $(P, Q, R) \in \mathcal{R}$ we have $e \in R$ which means that $R \in i(e)$. For $\nabla (X \otimes Y) \subseteq \nabla X \otimes \nabla Y $, assume $R \in \nabla (X \otimes Y)$ then again by definition there exist upsets $P \in X \otimes Y$ and $Q$ such that $e \in Q$ and $(P, Q, R) \in \mathcal{R}$. Since $P \in X \otimes Y$, there are $P_1 \in X$ and $P_2 \in Y$ such that $P_1 \otimes P_2 \subseteq P$. Since $\mathcal{R}$ is order reversing in its first argument and $(P, Q, R) \in \mathcal{R}$ we have $(P_1 \otimes P_2, Q, R) \in \mathcal{R}$. By the part $(iii)$ of the claim, there are upsets $Q_1, Q_2, R_1, R_2$ such that $e \in Q_1$, $e \in Q_2$, $(P_1, Q_1, R_1) \in \mathcal{R}$ and $(P_2, Q_2, R_2) \in \mathcal{R}$ and $R_1 \otimes R_2 \subseteq R$. Hence, by definition $R_1 \in \nabla X$ and $R_2 \in \nabla Y$ and since   $R_1 \otimes R_2 \subseteq R$ we have $R \in \nabla X \otimes \nabla Y$. Therefore, $\nabla (X \otimes Y) \subseteq \nabla X \otimes \nabla Y $.\\
For the adjunction conditions, i.e., $X \subseteq Y \to Z$ iff $Y \otimes \nabla X \subseteq Z$, we need to address the following two directions. For left to right, if $X \subseteq Y \to Z$ and $P \in Y \otimes \nabla X$ we have to show that $P \in Z$. Since $P \in Y \otimes \nabla X$, by definition there exist $Q, R$ such that $Q \otimes R \subseteq P$ and $Q \in Y$ and $R \in \nabla X$. Again by definition since $R \in \nabla X$ there exist $P', Q'$ such that $(P', Q', R) \in \mathcal{R}$, $P' \in X$ and $e \in Q'$. Since $e \in Q'$ for any $q \in Q$ we have $q=q \otimes e \in Q \otimes Q'$. Therefore, $Q \subseteq Q \otimes Q'$. Since $Q \in Y$ we have $Q \otimes Q' \in Y$. Since $(P', Q', R) \in \mathcal{R}$ by the part $(i)$ of the Claim, we have $(P', Q \otimes Q', Q \otimes R) \in \mathcal{R}$ and since $P' \in X \subseteq Y \to Z$ and  $Q \otimes Q' \in Y$, we have $Q \otimes R \in Z$. Finally since $Z$ is an upset and $Q \otimes R \subseteq P$ we have $P \in Z$. \\
For right to left, if $Y \otimes \nabla X \subseteq Z$ and $P \in X$ we want to show that $P \in Y \to Z$. Pick $Q$ and $R$ such that $(P, Q, R) \in \mathcal{R}$ and $Q \in Y$. We have to show that $R \in Z$. By the part $(ii)$ of the Claim, since $(P, Q, R) \in \mathcal{R}$ we have $(P, E, Q \Rightarrow R) \in \mathcal{R}$ where $e \in E$. Hence, by definition of $\nabla$, we have $Q \Rightarrow R \in \nabla X$ and hence $Q \otimes (Q \Rightarrow R) \in Y \otimes \nabla X$. Since $Y \otimes \nabla X \subseteq Z$ we have $Q \otimes (Q \Rightarrow R) \in Z$. Finally, since $Q \otimes (Q \Rightarrow R) \subseteq R$ and $Z$ is an upset we have $R \in Z$.
\end{proof}

Fortunately, if the monoidal structure is just the meet structure, it is possible to show that the internalization of the monoidal structure is sufficient for resolution. Moreover, it is possible to show that the quantale is actually a locale or even better an Alexandroff space:
\begin{thm} \label{KripkeRepresentationBaby}
For any (distributive) strong algebra $\mathcal{A}=(A, \leq, \wedge, 1, \to)$ that internalizes its monoidal structure [not necessarily its closed structure if it has any] (and its join structure), there exists a Kripke frame $\mathcal{K}$ and a (join preserving) strong algebra embedding $i: \mathcal{A} \to \mathcal{S}_{\mathcal{K}}$. Moreover, if $\mathcal{A}$ is a reduct of a (distributive) temporal algebra, $i$ also preserves $\nabla$.
\end{thm}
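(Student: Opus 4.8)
The plan is to build the frame directly from the (prime) filters of $\mathcal{A}$, using the \emph{discrete} order on worlds so that the compatibility condition of a Kripke frame becomes vacuous and the relation may be chosen freely. Concretely, in the non-distributive case let $W$ be the set of all filters of $\mathcal{A}$, and in the distributive case the set of all prime filters; order $W$ by equality $=_W$, and define $R$ by declaring $(P,Q)\in R$ iff $Q$ is closed under the implications recorded in $P$, i.e. for all $a,b$, if $a\to b\in P$ and $a\in Q$ then $b\in Q$. Since $=_W$ is discrete, $\mathcal{K}=(W,=_W,R)$ is a Kripke frame and, by Example~\ref{KripkeToSpacetime}, $\mathcal{S}_{\mathcal{K}}=(P(W),\nabla_{\mathcal{K}})$ is a spacetime whose quantale is the full powerset $P(W)=U(W,=_W)$. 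I would set $i(a)=\{P\in W: a\in P\}$ and first dispatch the routine structural facts: $i$ is a strict monoidal embedding because filters contain $1$ and are closed under and reflect binary meets, order-reflection following from the (prime) filter separation theorem; in the distributive case primeness of the worlds yields $i(a\vee b)=i(a)\cup i(b)$ and $i(0)=\emptyset$, so $i$ preserves finite joins.

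The heart of the proof is the identity $i(a\to b)=i(a)\to_{\mathcal{S}_{\mathcal{K}}}i(b)$. Spelling out $\to_{\mathcal{S}_{\mathcal{K}}}=\Box_{\mathcal{K}}((-)\Rightarrow(-))$ over the Boolean quantale $P(W)$, where $\Box_{\mathcal{K}}$ is the right adjoint of $\nabla_{\mathcal{K}}$ and $\Rightarrow$ is the Boolean implication, one computes that $P\in i(a)\to_{\mathcal{S}_{\mathcal{K}}}i(b)$ iff for every $Q$ with $(P,Q)\in R$ one has $a\in Q\Rightarrow b\in Q$. The inclusion $i(a\to b)\subseteq i(a)\to_{\mathcal{S}_{\mathcal{K}}}i(b)$ is then immediate from the very definition of $R$. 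For the converse I would prove the \emph{existence lemma}: if $a\to b\notin P$ there is a $Q\in W$ with $(P,Q)\in R$, $a\in Q$ and $b\notin Q$. In the non-distributive case the witness is $Q=\{d: a\to d\in P\}$, which is a filter by the internalization of the meet, $a\to(d\wedge d')=(a\to d)\wedge(a\to d')$, contains $a$ because $1\le a\to a$, omits $b$ exactly because $a\to b\notin P$, and satisfies $(P,Q)\in R$ by the transitivity axiom $(a\to c)\wedge(c\to d)\le a\to d$.

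The main obstacle is the distributive case of the existence lemma, where $Q$ must moreover be prime. Here I would enlarge the filter above, by Zorn's lemma, to a $P$-closed filter $Q$ that is maximal among those omitting $b$, and then establish primeness: given $x\vee y\in Q$ with $x,y\notin Q$, the smallest $P$-closed filters containing $Q\cup\{x\}$ and $Q\cup\{y\}$ both contain $b$ by maximality; after identifying these filters as $\{d:\exists q\in Q,\,(q\wedge x)\to d\in P\}$ and symmetrically for $y$, this yields some $q_0\in Q$ with $(q_0\wedge x)\to b\in P$ and $(q_0\wedge y)\to b\in P$. The internalization of the join gives $(q_0\wedge(x\vee y))\to b\in P$, and distributivity together with $P$-closedness of $Q$ forces $b\in Q$, a contradiction. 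I expect this join-splitting step, and the parallel prime-filter construction used below, to be the most delicate part of the argument.

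Finally, for the temporal reduct case I would prove $i(\nabla a)=\nabla_{\mathcal{K}}i(a)=\{Q:\exists P\,(a\in P\wedge(P,Q)\in R)\}$. The inclusion $\nabla_{\mathcal{K}}i(a)\subseteq i(\nabla a)$ uses $a\le 1\to\nabla a$, immediate from the temporal adjunction since $1\wedge\nabla a\le\nabla a$: from $a\in P$ one gets $1\to\nabla a\in P$, and then $(P,Q)\in R$ with $1\in Q$ gives $\nabla a\in Q$. For the reverse inclusion, when $\nabla a\in Q$ the non-distributive witness is the principal filter $\{x: x\ge a\}$, for which $(\{x:x\ge a\},Q)\in R$ follows from the adjunction again; in the distributive case I would instead apply the prime filter theorem to the filter $\{x:x\ge a\}$ and the ideal generated by the bad implications $\{c\to d: c\in Q,\,d\notin Q\}$, verifying disjointness exactly as above by collapsing a finite join of bad implications to a single $c\to d$ with $c\in Q$, $d\notin Q$ and deriving $a\le c\to d$, i.e. $c\wedge\nabla a\le d$, which would place $d\in Q$ against $d\notin Q$.
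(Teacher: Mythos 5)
Your proposal is correct and is in essence the paper's own argument (the four-case proof of Theorem \ref{KripkeRepresentation}): the canonical frame of (prime) filters, the relation $(P,Q)\in R$ iff $Q$ is closed under the implications recorded in $P$, the witness $\{d:\ a\to d\in P\}$ for the implication and the principal filter $\{x:\ x\geq a\}$ for $\nabla$, and a Zorn-plus-join-splitting argument for primeness. Two local deviations are worth recording. First, you keep the discrete order on worlds throughout, whereas the paper switches to the inclusion order $\subseteq$ in the temporal cases (where the reformulation $(P,Q)\in R$ iff $\nabla[P]\subseteq Q$ makes $R$ monotone in its second argument). For the statement of Theorem \ref{KripkeRepresentationBaby} this is immaterial, since Example \ref{KripkeToSpacetime} explicitly sanctions the discrete order; but the $\subseteq$-order is what the full Theorem \ref{KripkeRepresentation} needs in order to preserve the rule schemes (e.g.\ scheme $(P)$ requires $R\subseteq\;\leq$, which cannot hold for the canonical $R$ over a discrete order), so your frame would not extend to that stronger conclusion without modification. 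Second, in the distributive temporal case you replace the paper's Zorn argument on $\{S:\ (S,Q)\in\mathcal{R},\ a\in S,\ 0\notin S\}$ by a prime-filter separation of $\uparrow a$ from the ideal generated by $\{c\to d:\ c\in Q,\ d\notin Q\}$. This is a clean variant: the collapse of a finite join of bad implications to a single one via $(c_1\to d_1)\vee(c_2\to d_2)\leq (c_1\wedge c_2)\to(d_1\vee d_2)$ together with primeness of $Q$ is exactly the right computation, and the adjunction $c\wedge\nabla a\leq d$ finishes it; just note explicitly that $a\neq 0$ (which follows from $\nabla a\in Q$ and $Q$ proper, since $\nabla 0=0$) so that the filter and the ideal are indeed disjoint and the separation theorem applies.
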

\begin{proof}
See Theorem \ref{KripkeRepresentation}.
\end{proof}
And finally, in case that we already have a nice left adjoint for the implication, it is possible to make the algebra cocomplete, preserving the temporal structure. This will be useful in topological completeness theorem, Theorem \ref{t4-4}.
\begin{thm}
Let $\mathcal{A}=(A, \leq, \otimes, e, \nabla, \to)$ be a (distributive) temporal algebra. Then there exists a non-commutative spacetime $\mathcal{S}=(\mathscr{X}, \nabla)$ and a (join preserving) temporal algebra embedding $i: \mathcal{A} \to \mathcal{S}$. Moreover, if $\mathcal{A}$ has all finite meets, then $i$ also preserves them.
\end{thm}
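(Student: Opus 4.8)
The plan is to realize $\mathscr{X}$ as a cocomplete completion of the underlying monoidal poset of $\mathcal{A}$ and to transport $\nabla$ and $\to$ along the canonical embedding. For the general case I would take $\mathscr{X}=D(\mathcal{A})$, the downset completion of $(A,\leq,\otimes,e)$; by Theorem \ref{Completions} this is a quantale and the canonical map $i:\mathcal{A}\to\mathscr{X}$, $i(a)=\{x\in A\mid x\leq a\}$, is a strict monoidal embedding that preserves every finite meet present in $\mathcal{A}$, which already settles the last sentence of the statement. In the distributive case I would instead use the ideal completion $I(\mathcal{A})$, for which the same theorem yields that $i$ additionally preserves all finite joins.

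Next I would produce the temporal modality on $\mathscr{X}$. First note that setting $a=e$ in the defining adjunction $a\otimes\nabla b\leq c$ iff $b\leq a\to c$ exhibits $\nabla$ as the left adjoint of $e\to(-)$, so $\nabla$ preserves all (finite) joins; together with its oplaxness this lets me apply Theorem \ref{LiftingMonoidalMaps} to lift $\nabla$ to an oplax geometric map $\nabla_{\mathscr{X}}:=\nabla_!$ on $\mathscr{X}$ satisfying $\nabla_{\mathscr{X}}\,i=i\,\nabla$. Thus $\mathcal{S}=(\mathscr{X},\nabla_{\mathscr{X}})$ is a non-commutative spacetime and, by Theorem \ref{t6-8}, it carries its canonical implication $\to_{\mathcal{S}}$ characterized by $u\otimes\nabla_{\mathscr{X}} v\leq w$ iff $v\leq u\to_{\mathcal{S}} w$. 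Since $i$ is strict monoidal and commutes with the modalities, it is a morphism of the monoidal and temporal data, so the only remaining point is that it preserves the implication.

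The heart of the argument is therefore the identity $i(a\to b)=i(a)\to_{\mathcal{S}} i(b)$, and I would prove it through the adjoint characterization of $\to_{\mathcal{S}}$: for every downset $W\in\mathscr{X}$, show that $W\subseteq i(a\to b)$ iff $i(a)\otimes\nabla_{\mathscr{X}} W\subseteq i(b)$, whereupon uniqueness of adjoints forces $i(a\to b)$ and $i(a)\to_{\mathcal{S}} i(b)$ to coincide. Unwinding the explicit formulas, $\nabla_{\mathscr{X}} W=\{y\mid\exists w\in W,\ y\leq\nabla w\}$ and $i(a)\otimes\nabla_{\mathscr{X}} W=\{x\mid\exists w\in W,\ x\leq a\otimes\nabla w\}$, so inclusion in $i(b)=\{x\mid x\leq b\}$ is equivalent to $a\otimes\nabla w\leq b$ for all $w\in W$, which by the temporal adjunction of $\mathcal{A}$ is exactly $w\leq a\to b$ for all $w\in W$, i.e. $W\subseteq i(a\to b)$. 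This is the decisive reduction: the spatio-temporal implication of the completion collapses on the image to the abstract $\to$ precisely because the downset multiplication and the lifted $\nabla_{\mathscr{X}}$ are assembled directly from $\otimes$ and $\nabla$.

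I expect the main obstacle to be exactly this last verification, together with its bookkeeping in the distributive case: for the ideal completion one must first check that $\nabla_{\mathscr{X}} W$ is again an ideal (this is where preservation of finite joins by $\nabla$ enters) and then carry the computation through the join formula for ideals rather than plain downsets. A second point I would isolate at the outset is the oplaxness of $\nabla$, namely $\nabla e\leq e$ and $\nabla(x\otimes y)\leq\nabla x\otimes\nabla y$: it is what guarantees that the lift $\nabla_{\mathscr{X}}$ is oplax and hence that $\mathcal{S}$ is a genuine spacetime, it is the very feature transported by the order-reflecting embedding, and it matches the requirement that $\to$ be an honest implication in the sense of Definition \ref{DefImplication}.
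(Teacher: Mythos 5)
Your proposal is correct and follows essentially the same route as the paper: build $\mathscr{X}$ as the downset (resp.\ ideal) completion, lift $\nabla$ via Theorem \ref{LiftingMonoidalMaps}, and reduce preservation of $\to$ to the temporal adjunction $a\otimes\nabla w\leq b$ iff $w\leq a\to b$ applied pointwise to downsets. The only cosmetic difference is that the paper writes an explicit formula for the implication on downsets and verifies the full adjunction there, whereas you verify it only at image points and invoke uniqueness of adjoints via Theorem \ref{t6-8}; both are fine.
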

\begin{proof}
See Theorem \ref{RepFromTemp}.
\end{proof}

\section{Logics of Spacetime} \label{LogicsofSpcaeTimes}
In the previous section we presented some methods to represent some classes of implications via a diamond-type modality $\nabla$, encoding the abstract notion of time. In this section we bring the adjunction into the syntax of logic to provide a more expressible language to address non-standard weak implications. Later, we will see how this new language provides a conservative extension for some weak implication logics including Visser-Ruitenburg's basic logic, introduced in \cite{Vi} and \cite{Ru2}. However, the fully captured implications of these new logics make the non-standard implications more suitable for foundational studies. We will present an embedding of a fragment of full Lambek calculus, \cite{Galatos}, i.e., $\{\top, \bot, \wedge, \vee, \otimes, 1, \setminus \}$ into our logic and full intuitionistic logic into our logic equipped with the structural rules. Therefore, the logics of spacetime can be interpreted as a unification of sub-structural and sub-intuionistic logics. \\

Let $\mathcal{L}_{\nabla}$ be the usual language of propositional logic equipped with a new unary modal operator $\nabla$. To introduce some formal systems in this language, consider the following set of sequent-style rules in which the left side of a sequent is a sequence of formulas and if $\Gamma=\langle A_i \rangle_{i=0}^{n}$ by $\nabla \Gamma$ we mean $\langle \nabla A_i \rangle_{i=0}^{n}$:

\begin{flushleft}
 \textbf{Axioms:}
\end{flushleft}
\begin{center}
 \begin{tabular}{c c c c c} 
 \AxiomC{}
 \UnaryInfC{$ A \Rightarrow A$}
 \DisplayProof
&
 \AxiomC{}
 \UnaryInfC{$ \Rightarrow 1$}
 \DisplayProof
 &
 \AxiomC{}
 \UnaryInfC{$\nabla 1 \Rightarrow 1$}
 \DisplayProof
 &
 \AxiomC{}
 \UnaryInfC{$\Gamma \Rightarrow \top$}
 \DisplayProof
 &
 \AxiomC{}
 \UnaryInfC{$\Gamma, \bot, \Sigma \Rightarrow A$}
 \DisplayProof
 \\[3ex]
\end{tabular}
\end{center}

\begin{flushleft}
 		\textbf{Cut:}
\end{flushleft}
\begin{center}
  	\begin{tabular}{c}
		
		\AxiomC{$ \Gamma \Rightarrow A$}
		\AxiomC{$ \Pi, A, \Sigma \Rightarrow B$}
		\RightLabel{$cut$}
		\BinaryInfC{$\Pi, \Gamma, \Sigma \Rightarrow B$}
		\DisplayProof
		\end{tabular}
\end{center}

\begin{flushleft}
 \textbf{Conjunction Rules:}
\end{flushleft}
\begin{center}
 \begin{tabular}{c c c}  
\AxiomC{$\Gamma, A, \Sigma \Rightarrow C$}
 \RightLabel{$L \wedge$} 
 \UnaryInfC{$\Gamma, A \wedge B, \Sigma \Rightarrow C$}
 \DisplayProof
 &
 \AxiomC{$\Gamma, B, \Sigma \Rightarrow C$}
 \RightLabel{$L \wedge$} 
 \UnaryInfC{$\Gamma, A \wedge B, \Sigma \Rightarrow C$}
 \DisplayProof
	   		&
   		\AxiomC{$\Gamma \Rightarrow A$}
   		\AxiomC{$\Gamma  \Rightarrow B$}
   		\RightLabel{$R \wedge$} 
   		\BinaryInfC{$ \Gamma \Rightarrow A \wedge B $}
   		\DisplayProof
   			\\[3 ex]
\end{tabular}
\end{center}

\begin{flushleft}
 \textbf{Disjunction Rules:}
\end{flushleft}
\vspace{.001pt}
\begin{center}
 \begin{tabular}{c c c}
 \AxiomC{$\Gamma, A, \Sigma \Rightarrow C$}
 \AxiomC{$\Gamma, B, \Sigma \Rightarrow C$}
 \RightLabel{$L \vee$} 
 \BinaryInfC{$\Gamma, A \vee B, \Sigma \Rightarrow C$}
 \DisplayProof
 &
 \AxiomC{$\Gamma \Rightarrow A$}
 \RightLabel{$R \vee$} 
 \UnaryInfC{$\Gamma \Rightarrow A \vee B$}
 \DisplayProof
 &
 \AxiomC{$\Gamma \Rightarrow B$}
 \RightLabel{$R \vee$} 
 \UnaryInfC{$\Gamma \Rightarrow A \vee B$}
 \DisplayProof
 \\[3ex]
\end{tabular}
\end{center}

\begin{flushleft}
 \textbf{Rule for 1:}
\end{flushleft}
\begin{center}
 \begin{tabular}{c} 
 \AxiomC{$\Gamma, \Sigma \Rightarrow A$}
 \RightLabel{$L1$} 
 \UnaryInfC{$\Gamma, 1, \Sigma \Rightarrow A$}
 \DisplayProof
\end{tabular}
\end{center}

\begin{flushleft}
 \textbf{Multiplication Rules:}
\end{flushleft}
\begin{center}
 \begin{tabular}{c c } 
 \AxiomC{$\Gamma, A, B, \Sigma \Rightarrow C$}
 \RightLabel{$L \otimes$} 
 \UnaryInfC{$\Gamma, A \otimes B, \Sigma \Rightarrow C$}
 \DisplayProof
 &
 \AxiomC{$\Gamma \Rightarrow A $}
 \AxiomC{$\Sigma \Rightarrow B$}
 \RightLabel{$R \otimes$} 
 \BinaryInfC{$\Gamma, \Sigma \Rightarrow A \otimes B$}
 \DisplayProof
\end{tabular}
\end{center}

\begin{flushleft}
 		\textbf{Modal Rules:}
\end{flushleft}
\begin{center}
  	\begin{tabular}{c c}
		\AxiomC{$ A \Rightarrow B$}
		\RightLabel{$\nabla$}
		\UnaryInfC{$\nabla A \Rightarrow \nabla B$}
		\DisplayProof \;\;\;
		&
		\AxiomC{$ \nabla A, \nabla B \Rightarrow C$}
		\RightLabel{$Oplax$}
		\UnaryInfC{$\nabla (A \otimes B) \Rightarrow C$}
		\DisplayProof
		\end{tabular}
\end{center}

\begin{flushleft}
 \textbf{Implication Rules:}
\end{flushleft}
\vspace{.001pt}
\begin{center}
 \begin{tabular}{c c}
 \AxiomC{$\Gamma \Rightarrow A$}
 \AxiomC{$\Pi, B, \Sigma \Rightarrow C$}
 \RightLabel{$L \to$} 
 \BinaryInfC{$\Pi, \Gamma, \nabla (A \to B), \Sigma \Rightarrow C$}
 \DisplayProof
 &
 \AxiomC{$A, \nabla \Gamma \Rightarrow B$}
 \RightLabel{$R \to$} 
 \UnaryInfC{$\Gamma \Rightarrow A \to B$}
 \DisplayProof
 \\[3ex]
\end{tabular}
\end{center}
Now define the logic of spacetime, $\mathbf{STL}$, as the logic of the proof system consisting of all the axioms, cut and propositional rules. The provability of a sequent $\Gamma \Rightarrow A$ in $\mathbf{STL}$ is denoted by $\mathbf{STL} \vdash \Gamma \Rightarrow A$ or $\Gamma \vdash_{\mathbf{STL}} A$.\\
By the basic rule schemes $ \{N, H, P, F, wF\}$, we mean one of the following schemes:
\begin{flushleft}
 		\textbf{Rule Schemes:}
\end{flushleft}
\begin{center}
  	\begin{tabular}{c c c c}
  	
  	    \AxiomC{$ \Gamma \Rightarrow A$}
		\RightLabel{$N$}
		\UnaryInfC{$ \nabla \Gamma \Rightarrow \nabla A $}
		\DisplayProof
		&
		\AxiomC{$ \Gamma \Rightarrow \nabla A$}
		\RightLabel{$P$}
		\UnaryInfC{$ \Gamma \Rightarrow A$}
		\DisplayProof
		&
		\AxiomC{$ \Gamma \Rightarrow A$}
		\RightLabel{$F$}
		\UnaryInfC{$ \Gamma \Rightarrow \nabla A$}
		\DisplayProof
		&
		\AxiomC{$ \nabla A \Rightarrow \bot$}
		\RightLabel{$wF$}
		\UnaryInfC{$ A \Rightarrow \bot $}
		\DisplayProof
		\\[3 ex]
		\end{tabular}
		
  	\begin{tabular}{c c}
  	
		\AxiomC{$ \Gamma, \{A_i \to B_i\}_{i \in I} \Rightarrow C$}
		\RightLabel{$H$}
		\UnaryInfC{$ \nabla \Gamma, \{\nabla A_i \to \nabla B_i\}_{i \in I} \Rightarrow \nabla C $}
		\DisplayProof
		\end{tabular}
     \end{center}
Also consider the structural rules:
\begin{flushleft}
 		\textbf{Structural Rules:}
\end{flushleft}

\begin{center}
 \begin{tabular}{c c c} 
 \AxiomC{$\Gamma, \Sigma \Rightarrow B$}
 \RightLabel{$L w$}
 \UnaryInfC{$\Gamma, A, \Sigma \Rightarrow B$}
 \DisplayProof
 &
 \AxiomC{$\Gamma, A, A, \Sigma \Rightarrow B$}
\RightLabel{$Lc$}
 \UnaryInfC{$\Gamma, A, \Sigma \Rightarrow B$}
 \DisplayProof
 &
 \AxiomC{$\Gamma, A, B, \Sigma \Rightarrow C$}
 \RightLabel{$Le$}
 \UnaryInfC{$ \Gamma, B, A, \Sigma \Rightarrow C$}
 \DisplayProof
 
\end{tabular}
\end{center}
For any $\mathcal{R} \subseteq \{N, H, P, F, wF\}$, by the logic $\mathbf{STL}(\mathcal{R})$ we mean the logic of all rules of $\mathbf{STL}$ plus the rules of $\mathcal{R}$. By $i\mathbf{STL}(\mathcal{R})$ we mean $\mathbf{STL}(\mathcal{R})$ with all structural rules. And finally we denote $\mathbf{STL}(\{P, F\})$ by $\mathbf{FL}_l$ and $i\mathbf{STL}(\{P, F\})$ by $\mathbf{IPC}$.

\begin{rem}\label{3}
Note that in the presence of all the structural rules, the connective $\otimes$ collapses to $\wedge$ and the constant $1$ is reduced to $\top$. Therefore, it is possible to axiomatize the structural logics of spacetime by eliminating the connective $\otimes$ and $1$ from the language and the axiom $\Rightarrow 1$ and the rules $L\otimes$, $R \otimes$, $L1$ and $Oplax$ from the system. 
\end{rem}

\begin{rem}
Note that in the presence of both $(F)$ and $(P)$, the connective $\nabla$ trivializes to identity. Therefore, in such logics and more specifically in $\mathbf{FL}_l$ and $\mathbf{IPC}$, it is possible to formalize the logics without the axiom $\nabla 1 \Rightarrow 1$ and the rules $\nabla$ and $Oplax$, by eliminating $\nabla$ in the implication rules. In such a situation, the implication rules become the usual left implication rules in $\mathbf{FL}$. This explains our terminology. In fact, our logic is exactly the fragment of $\mathbf{FL}$ excluding the right implication and $0$ from both the language and the rules. For $\mathbf{IPC}$, it is easy to see that the system becomes the original system $\mathbf{LJ}$ for intuitionistic propositional logic if we forget the collapsed $\otimes$. See Remark \ref{3}.
\end{rem}

\begin{rem}\label{PropertiesOgfLogic}
Note that the following sequents are provable in the system. First $\nabla (A \otimes B) \Rightarrow \nabla A \otimes \nabla B$ stating the oplax condition for $\nabla$:
\begin{center}
\begin{tabular}{c}
        
        \AxiomC{$ \nabla A \Rightarrow \nabla A $}
        \AxiomC{$ \nabla B \Rightarrow \nabla B $}
        \RightLabel{\tiny{$\otimes R$}}
        \BinaryInfC{$\nabla A, \nabla B \Rightarrow \nabla A \otimes \nabla B$}
        \RightLabel{\tiny{$Oplax$}}
        \UnaryInfC{$\nabla (A \otimes B) \Rightarrow \nabla A \otimes \nabla B$}
        \DisplayProof
        
\end{tabular}
\end{center}
Secondly, $\mathbf{STL}$ proves the distributivity of multiplication over disjunction, on both sides, i.e. $(A \otimes B) \vee (A \otimes C) \Rightarrow A \otimes (B \vee C) $ and $A \otimes (B \vee C) \Rightarrow (A \otimes B) \vee (A \otimes C)$. The first is a simple consequence of monotonicity of $\otimes$. For the second:
\begin{center}
  	\begin{tabular}{c}
        
        \AxiomC{$A, B \Rightarrow A \otimes B $}
   		\RightLabel{$ $} 
   		\UnaryInfC{$A, B \Rightarrow A \otimes B \vee A \otimes C$}
   		\AxiomC{$A, C \Rightarrow A \otimes C $}
   		\UnaryInfC{$A, C \Rightarrow A \otimes B \vee A \otimes C$}
   		\RightLabel{$ $} 
   		\BinaryInfC{$A, (B \vee C) \Rightarrow A \otimes B \vee A \otimes C$}
   		 \RightLabel{\tiny{$L \otimes$}}
   		\UnaryInfC{$A \otimes (B \vee C) \Rightarrow A \otimes B \vee A \otimes C$}
   		\DisplayProof
   	
	\end{tabular}
\end{center}
Thirdly, the system proves the sequent $A \otimes \nabla (A \to B) \Rightarrow B$:

\begin{center}
\begin{tabular}{c}
        
        \AxiomC{$ A \Rightarrow A $}
        \AxiomC{$  B \Rightarrow B $}
        \RightLabel{\tiny{$L \to$}}
        \BinaryInfC{$ A, \nabla (A \to B) \Rightarrow B$}
        \RightLabel{\tiny{$L \otimes$}}
        \UnaryInfC{$ A \otimes \nabla (A \to B) \Rightarrow B$}
        \DisplayProof
        
\end{tabular}
\end{center}
Therefore, the sequents $A, \nabla B \Rightarrow C$ and $B \Rightarrow A \to C$ are equivalent. From left to right is just one application of the rule $R \to$. From right to left, by the rule $\nabla$, we have $\nabla B \Rightarrow \nabla (A \to C)$. Using cut with $A, \nabla (A \to C) \Rightarrow C$ we reach what we wanted. Note that this adjunction situation simply implies that $\nabla$ preserves all disjunctions, i.e., $\nabla \bot \Rightarrow \bot$, $\nabla (A \vee B) \Rightarrow \nabla A \vee \nabla B$ and $\nabla A \vee \nabla B \Rightarrow \nabla (A \vee B)$. Fourthly, the system proves the sequent $A \to B \Rightarrow C \otimes A \to C \otimes B$:

\begin{center}
\begin{tabular}{c}
         \AxiomC{$C \Rightarrow C $}
         
         \AxiomC{$ A \Rightarrow A$}
         \AxiomC{$ B \Rightarrow B$}
         \RightLabel{\tiny{$L \to $}}
        \BinaryInfC{$ A, \nabla (A \to B) \Rightarrow B$}
        \RightLabel{\tiny{$R \otimes $}}
        \BinaryInfC{$C, A, \nabla (A \to B) \Rightarrow  C \otimes B$}
        \RightLabel{\tiny{$L \otimes $}}
        \UnaryInfC{$C \otimes A, \nabla (A \to B) \Rightarrow C \otimes B$}
        \RightLabel{\tiny{$R \to$}}
        \UnaryInfC{$A \to B \Rightarrow C \otimes A \to C \otimes B$}
        \DisplayProof
        
\end{tabular}
\end{center}
\end{rem}

\begin{rem}
Note that the defined extensions of the system $\mathbf{STL}$ can be also axiomatized with some axioms instead of  rules. For $(N)$ the axioms are $\Rightarrow \nabla 1$ and $\nabla A \otimes \nabla B \Rightarrow \nabla (A \otimes B)$. These are provable by the rule $(N)$ because:
\begin{center}
  	\begin{tabular}{c c}
  	
   		\AxiomC{$ \Rightarrow 1 $}
   		\RightLabel{\tiny{$(N)$}}
   		\UnaryInfC{$ \Rightarrow \nabla 1$}
   		\DisplayProof
   		&
   		\AxiomC{$A \Rightarrow A $}
   		\AxiomC{$ B \Rightarrow B $}
   		 \RightLabel{\tiny{$R \otimes$}}
        \BinaryInfC{$A, B \Rightarrow A \otimes B $}
   		 \RightLabel{\tiny{$(N)$}}
   		\UnaryInfC{$\nabla A, \nabla B \Rightarrow \nabla (A \otimes B)$}
   		 \RightLabel{\tiny{$L \otimes$}}
   		\UnaryInfC{$\nabla A \otimes \nabla B \Rightarrow \nabla (A \otimes B)$}
   		\DisplayProof
	\end{tabular}
\end{center}
The converse is also true. For the empty $\Gamma$, if $\Rightarrow A$, then by $(L1)$, we have $1 \Rightarrow A$. By $\nabla$ we have $\nabla 1 \Rightarrow \nabla A$. Hence, by $\Rightarrow \nabla 1$ we have $\Rightarrow \nabla A$. For $\Gamma$ with at least one element, by induction, it is possible to use the axiom to prove that $\bigotimes (\nabla \Gamma) \Rightarrow \nabla (\bigotimes \Gamma) $, where by $\bigotimes \Pi$ we mean $\bigotimes_{i=0}^n A_i$ when $\Pi=\langle A_i \rangle_{i=0}^{n}$. Hence,
\begin{center}
  	\begin{tabular}{c}
  	
   		\AxiomC{$\bigotimes (\nabla \Gamma) \Rightarrow \nabla (\bigotimes \Gamma) $}
   		
   		\AxiomC{$ \Gamma \Rightarrow A $}
   		\RightLabel{\tiny{$L \otimes $}}
        \UnaryInfC{$\bigotimes \Gamma \Rightarrow A $}
   		 \RightLabel{\tiny{$\nabla$}}
   		\UnaryInfC{$\nabla (\bigotimes \Gamma) \Rightarrow \nabla A$}
   		 \RightLabel{\tiny{$cut$}}
   		\BinaryInfC{$\bigotimes (\nabla \Gamma) \Rightarrow \nabla A$}
   		\doubleLine
   		\UnaryInfC{$ \nabla \Gamma \Rightarrow \nabla A$}
   		\DisplayProof
	\end{tabular}
\end{center}
where the double line means the existence of an easy omitted proof tree there. Therefore, since $\nabla 1 \Rightarrow 1$ and $\nabla (A \otimes B) \Rightarrow \nabla A \otimes \nabla B$ are already provable in $\mathbf{STL}$ without $(N)$, the rule $(N)$ just states the strictness of $\nabla$, i.e., for any sequence $\Gamma$, the sequents $\bigotimes (\nabla \Gamma)$ and $\nabla (\bigotimes \Gamma)$ are equivalent. This justifies the name of the rule, $(N)$, that stands for normality, reflecting the normality condition of the usual conjunction-preserving modalities.
For $(H)$, note that this rule implies the rule $(N)$ for $I=\emptyset$. It also implies that $\nabla A \to \nabla B \Rightarrow \nabla (A \to B)$ because:
\begin{center}
 \begin{tabular}{c c}
  	
		\AxiomC{$  A \to B \Rightarrow A \to B$}
		\RightLabel{\tiny{$H$}}
		\UnaryInfC{$\nabla A \to \nabla B \Rightarrow \nabla (A \to B) $}
		\DisplayProof
		\end{tabular}
     \end{center}
Therefore, $H$ implies $(\Rightarrow \nabla 1$), ($\nabla A \otimes \nabla B \Rightarrow \nabla (A \otimes B)$) and ($\nabla A \to \nabla B \Rightarrow \nabla (A \to B) $). These are enough to prove $(H)$ because the first part implies the rule $(N)$ and then 
\begin{center}
\begin{tabular}{c}

        \AxiomC{$ \{\nabla A_i \to \nabla B_i\}_{i \in I} \Rightarrow \bigotimes_{i \in I} \nabla (A_i \to B_i)$}
  	
		\AxiomC{$ \Gamma, \{A_i \to B_i\}_{i \in I} \Rightarrow C$}
		\RightLabel{\tiny{$(N)$}}
		\UnaryInfC{$ \nabla \Gamma, \{\nabla (A_i \to B_i)\}_{i \in I} \Rightarrow \nabla C $}
		\RightLabel{\tiny{$L\otimes$} }
		\UnaryInfC{$ \nabla \Gamma, \bigotimes_{i \in I} \nabla (A_i \to B_i) \Rightarrow \nabla C $}
		\RightLabel{\tiny{$\nabla$}}
		\BinaryInfC{$ \nabla \Gamma, \{\nabla A_i \to \nabla B_i\}_{i \in I} \Rightarrow \nabla C $}
		\DisplayProof
		\end{tabular}
     \end{center}
Moreover, in the presence of $(H)$ or even $(N)$ we also have:
\begin{center}
\begin{tabular}{c}

		\AxiomC{$A, \nabla (A \to B) \Rightarrow B$}
		\RightLabel{\tiny{$(N)$}}
		\UnaryInfC{$\nabla A, \nabla \nabla (A \to B) \Rightarrow \nabla B $}
		\RightLabel{\tiny{$R \to$}}
		\UnaryInfC{$ \nabla (A \to B) \Rightarrow \nabla A \to \nabla B $}
		\DisplayProof
		\end{tabular}
     \end{center}
Therefore, the rule $(H)$ is equivalent to the strictness of $\nabla$ and the equivalence between $\nabla (A \to B)$ and $\nabla A \to \nabla B$. We will see that these conditions when applied on a locale of the open subsets of a topological space is equivalent to the condition that $\nabla$ be the inverse image of a homeomorphism.  This justifies the name of the rule, $(H)$.
For $(P)$ and $(F)$, they are equivalent to $\nabla A \Rightarrow A$ and $A \Rightarrow \nabla A$, respectively. $(P)$ stands for past and $(F)$ for future, reflecting the temporal nature of the modality $\nabla$. We will see the details in Section \ref{KripkeModels}. Finally, $(wF)$ is equivalent to $1 \to \bot \Rightarrow \bot$. It is provable via $(wF)$ because
\begin{center}
 \begin{tabular}{c c}
  	\AxiomC{$\Rightarrow 1$}
  	
  	\AxiomC{$ $}
  	\doubleLine
  	    \UnaryInfC{$1, \nabla (1 \to \bot) \Rightarrow \bot $}
		\RightLabel{\tiny{$cut$}}
		\BinaryInfC{$\nabla (1 \to \bot) \Rightarrow \bot $}
		\RightLabel{\tiny{$(wF)$}}
		\UnaryInfC{$1 \to \bot \Rightarrow \bot$}
		\DisplayProof
		\end{tabular}
     \end{center}
     Conversely, if we have the axiom $1 \to \bot \Rightarrow \bot$, then
  \begin{center}
 \begin{tabular}{c}
  	
  	    \AxiomC{$\nabla A \Rightarrow \bot$}
  	    \RightLabel{\tiny{$L1$}}
  	    \UnaryInfC{$1, \nabla A \Rightarrow \bot $}
		\RightLabel{\tiny{$R \to$}}
		\UnaryInfC{$A \Rightarrow 1 \to \bot$}
		
		\AxiomC{$1 \to \bot \Rightarrow \bot$}
		\RightLabel{\tiny{$cut$}}
		\BinaryInfC{$A \Rightarrow \bot$}
		\DisplayProof
		\end{tabular}
     \end{center}   
In this rule, $(wF)$ stands for ``weak future", since the rule $(F)$ clearly implies $(wF)$. The reason is that $(F)$ implies $A \Rightarrow \nabla A$. Hence, using cut $\nabla A \Rightarrow \bot$ implies $A \Rightarrow \bot$.
\end{rem}

\begin{dfn}\label{t4-1}(Topological Semantics)
Let $\mathcal{S}=(\mathscr{X}, \nabla_{\mathcal{S}})$ be a non-commutative spacetime and $V:\mathcal{L}_{\nabla} \to\mathscr{X}$ an assignment. A tuple $(\mathcal{S}, V)$ is called a topological model for the language $\mathcal{L}_{\nabla}$ if: 
\begin{itemize}
\item[$\bullet$]
$V(1)=e$, $V(\bot)=0$ and $V(\top)=1$,
\item[$\bullet$]
$V(A \wedge B)=V(A) \wedge V(B)$,
\item[$\bullet$]
$V(A \vee B)=V(A) \vee V(B)$,
\item[$\bullet$]
$V(A \otimes B)=V(A) \otimes V(B)$,
\item[$\bullet$]
$V(\nabla A)=\nabla_{\mathcal{S}} V(A)$,
\item[$\bullet$]
$V(A \rightarrow B)= V(A)\to_{\mathcal{S}} V(B)$.
\end{itemize}
We say $(\mathcal{S}, V) \vDash \Gamma \Rightarrow A$ when $\bigotimes_{\gamma \in \Gamma} V(\gamma) \leq V(A)$ and $\mathcal{S} \vDash \Gamma \Rightarrow A$ when for all $V$, $(\mathcal{S}, V) \vDash \Gamma \Rightarrow A$. For a class $\mathcal{C}$ of non-commutative spacetimes, we write $\mathcal{C} \vDash \Gamma \Rightarrow A$ if for any $\mathcal{S} \in \mathcal{C}$ we have $\mathcal{S} \vDash \Gamma \Rightarrow A$. Moreover, if for some fixed $\mathscr{X}$ and for all $(\mathscr{X}, \nabla)$ in some class $\mathcal{C}$ we have $(\mathscr{X}, \nabla) \vDash \Gamma \Rightarrow A$, we write $\mathscr{X} \vDash_{\mathcal{C}} \Gamma \Rightarrow A$. If $\mathscr{X}$ is $\mathcal{O}(X)$ for some topological space, we simplify it more to $X \vDash_{\mathcal{C}} \Gamma \Rightarrow A$. Furthermore,  we omit the symbol $\Rightarrow$ whenever $\Gamma$ is empty. 
\end{dfn}

\begin{dfn}
Let $\mathcal{A}=(A, \leq, \otimes, e, \to, \nabla)$ be a temporal algebra. Then for any rule scheme $R \in \{N, H, P, F, wF\}$, we say $\mathcal{A}$ satisfies $R$ if:
\begin{itemize}
\item[$(N)$] $\nabla$ preserves all finite multiplications,
\item[$(H)$] $\nabla$ preserves all the structure including the implication,
\item[$(P)$] For any $a \in A$ we have $\nabla a \leq a$,
\item[$(F)$] For any $a \in A$ we have $a \leq \nabla a$,
\item[$(wF)$]  $\mathcal{A}$ has zero and for any $a \in A$, if $\nabla a =0$ then $a=0$.
\end{itemize}
\end{dfn}

\begin{dfn}
For any set of rule schemes $\mathcal{R} \subseteq \{N, H, P, F, wF\}$, by the class $\mathbf{ST}(\mathcal{R})$ we mean the class of all non-commutative sapcetimes $(\mathscr{X}, \nabla)$ that satisfies all the rule schemes in $\mathcal{R}$. The class $i\mathbf{ST}(\mathcal{R})$ is defined similarly for spacetimes.
\end{dfn}

\begin{rem}\label{HforAlgebra}
Note that the condition $(H)$ implies that $\nabla$ is an isomorphism with the inverse $\Box=e \to (-)$. The proof is the following. Since $\nabla e=e$ we have 
\[
e \to \nabla a=\nabla e \to \nabla a=\nabla (e \to a)
\]
but since $\nabla \dashv e \to (-)$, we have $\nabla (e \to a) \leq a \leq e \to \nabla a$. Hence, $\nabla (e \to a)=a=e \to \nabla a$. This means that $\nabla$ and $\Box$ are inverses of each other over $A$.
\end{rem}

\begin{rem}\label{HforSpace}
Note that for non-commutative spacetimes, the conditions $(N)$ and $(H)$ are equivalent to ``$\nabla$ is a strict geometric morphism" and ``$\nabla$ is a strict geometric isomorphism", respectively. The reason for the first one is that $\nabla$ has a right adjoint and hence preserves all joins. Hence, the only geometricity condition is the preservation of multiplications. For the second, we have to show that if $\nabla$ is a strict geometric isomorphism, then it also preserves the implication. Let $\mathcal{S}=(\mathscr{X}, \nabla)$ be a non-commutative spacetime where $\nabla$ is a strict geometric isomorphism. Then, to reduce the risk of confusion, let us denote $\nabla$ by $f$. We know that $f$ has an inverse. Call it $g$. Since they are inverses, we have $g \dashv f$. Then since $f$ preserves $\nabla$, it can be seen as a geometric map between non-commutative spaces, i.e., $f: \mathcal{S} \to \mathcal{S}$. Finally, by Theorem \ref{Logical}, to prove it is logical meaning that it respects the implication, it is enough to check that
$
g(f b \otimes \nabla a)=b \otimes \nabla g a
$. 
Since $f=\nabla$ is strict and $gf =id=fg$ we have
$
g (f b \otimes f a)=gf (b \otimes a)=b \otimes fg a
$.
Therefore, $f=\nabla$ preserves the implication.
\end{rem}

\begin{thm}\label{t4-2}(Soundness) For any set of rule schemes $\mathcal{R} \subseteq \{N, H, P, F, wF\}$, if $\mathbf{STL}(\mathcal{R}) \vdash \Gamma \Rightarrow A$ then $\mathbf{ST}(\mathcal{R}) \vDash \Gamma \Rightarrow A$. Specially, if $\Gamma \vdash_{i\mathbf{STL}(\mathcal{R})} A$ then $i\mathbf{ST}(\mathcal{R}) \vDash \Gamma \Rightarrow A$.
\end{thm}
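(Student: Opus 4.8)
The plan is to argue by induction on the height of a derivation in $\mathbf{STL}(\mathcal{R})$, fixing an arbitrary model $(\mathcal{S}, V)$ with $\mathcal{S} = (\mathscr{X}, \nabla) \in \mathbf{ST}(\mathcal{R})$ and showing that every derivable sequent $\Gamma \Rightarrow A$ satisfies $\bigotimes_{\gamma \in \Gamma} V(\gamma) \leq V(A)$. Throughout I will freely use that $\otimes$, $\nabla$ and $\to_{\mathcal{S}}$ are order preserving, that $\otimes$ distributes over all joins (so $0 = V(\bot)$ is absorbing and $1 = V(\top)$ is the top), and above all the defining adjunction of Theorem \ref{t6-8}, namely $a \otimes \nabla b \leq c$ iff $b \leq a \to_{\mathcal{S}} c$, together with the oplax inequalities $\nabla e \leq e$ and $\nabla(a \otimes b) \leq \nabla a \otimes \nabla b$.

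First I would dispatch the axioms: $A \Rightarrow A$, $\Rightarrow 1$ and $\Gamma \Rightarrow \top$ are immediate from reflexivity and from $e = V(1)$ being the unit and $1$ the top; $\nabla 1 \Rightarrow 1$ is exactly $\nabla e \leq e$; and $\Gamma, \bot, \Sigma \Rightarrow A$ holds because $V(\bot) = 0$ is absorbing, so the left-hand product collapses to $0 \leq V(A)$. The propositional rules (cut, the $\wedge$-, $\vee$-, $L1$- and $\otimes$-rules) preserve satisfaction by routine monotonicity of $\otimes$ and the universal properties of meet and join, the $L\vee$ step using distributivity of $\otimes$ over binary joins. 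The two modal rules are equally direct: the $\nabla$-rule is monotonicity of $\nabla$, and $Oplax$ follows from $\nabla(V(A) \otimes V(B)) \leq \nabla V(A) \otimes \nabla V(B)$.

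The delicate steps are the two implication rules. For $L\to$ I would first record the semantic form of the internal modus ponens $A \otimes \nabla(A \to B) \Rightarrow B$, i.e. $V(A) \otimes \nabla(V(A) \to_{\mathcal{S}} V(B)) \leq V(B)$, which is just the counit of the adjunction; chaining it with the premises $\bigotimes \Gamma \leq V(A)$ and $\bigotimes \Pi \otimes V(B) \otimes \bigotimes \Sigma \leq V(C)$ via monotonicity yields the conclusion. For $R\to$ the premise only gives $V(A) \otimes \bigotimes_{i} \nabla V(\gamma_i) \leq V(B)$, whereas the adjunction is phrased with $\nabla$ applied to the whole context; here the oplax inequality $\nabla(\bigotimes \Gamma) \leq \bigotimes_{i} \nabla V(\gamma_i)$ bridges the gap, after which one application of the adjunction delivers $\bigotimes \Gamma \leq V(A) \to_{\mathcal{S}} V(B)$. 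This passage from a componentwise $\nabla\Gamma$ to $\nabla$ of the product, mediated by oplaxness, is the main subtlety I expect in the propositional part.

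Finally I would treat the optional schemes one at a time, each invoking exactly the corresponding clause in the definition of $\mathbf{ST}(\mathcal{R})$: $(P)$ and $(F)$ from $\nabla a \leq a$ and $a \leq \nabla a$; $(wF)$ from the implication that $\nabla a = 0$ forces $a = 0$; $(N)$ from preservation of finite products, which turns $\nabla(\bigotimes\Gamma) \leq \nabla V(A)$ into $\bigotimes_{i} \nabla V(\gamma_i) \leq \nabla V(A)$; and $(H)$, the hardest, from full preservation of the structure, where after applying $\nabla$ to the premise I rewrite using that $\nabla$ commutes with finite products and that $\nabla(a \to_{\mathcal{S}} b) = \nabla a \to_{\mathcal{S}} \nabla b$ (Remark \ref{HforSpace}). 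For the intuitionistic variant $i\mathbf{STL}(\mathcal{R})$ over $i\mathbf{ST}(\mathcal{R})$, the monoidal structure is the meet, so weakening, contraction and exchange reduce to $a \wedge b \leq a$, $a \leq a \wedge a$ and $a \wedge b = b \wedge a$, all valid in a locale; this is the only point where restricting from arbitrary non-commutative spacetimes to spacetimes is actually used.
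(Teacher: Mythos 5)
Your proposal is correct and follows essentially the same route as the paper's (deliberately terse) proof: the paper likewise singles out the oplax axiom and rule, the $R\to$ case bridged by $\nabla(\bigotimes\Gamma)\leq\bigotimes\nabla\Gamma$ followed by the adjunction, the rule schemes matched to their semantic counterparts, and the collapse of the structural rules to locale identities. Your treatment is simply a more explicit write-up of the induction the paper leaves to the reader.
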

\begin{proof}
Since the logics are just the syntactical elementary representations of the structure of the non-commutative spacetimes, the soundness theorem is clear and we will leave the details to the reader. There are only four points to make. First about the rule $Oplax$ and the axiom $\nabla 1 \Rightarrow 1$. They are clearly valid whenever the interpretation of $\nabla$ is oplax. Hence, they are valid in our topological interpretation. Secondly, consider the rule $R \to$. If $\Gamma \Rightarrow A \to B$ is proved by $A, \nabla \Gamma \Rightarrow B$, then by induction hypothesis, for any non-commutative spacetime $\mathcal{S}=(\mathscr{X}, \nabla_{\mathcal{S}})$ and any $V: \mathcal{L}_{\nabla} \to \mathscr{X}$ we have: $V(A) \otimes \bigotimes_{\gamma \in \Gamma} \nabla_{\mathcal{S}} V(\gamma) \leq V(B)$. Since $\nabla_{\mathcal{S}}$ is oplax, we have $V(A) \otimes \nabla_{\mathcal{S}} (\bigotimes_{\gamma \in \Gamma}  V(\gamma)) \leq V(B)$. By adjunction, we have $\bigotimes_{\gamma \in \Gamma}  V(\gamma) \leq V(A) \to_{\mathcal{S}} V(B)$. Therefore, the rule $R \to$ is also valid. Thirdly, note that all the rule schemes are equivalent to some axioms and those axioms are exactly the corresponding conditions on the non-commutative spacetimes. Hence, their validity is evident. Finally, note that for the spacetimes $\otimes=\wedge$ and $e=1$. Therefore, it is clear that all the structural rules are valid.
\end{proof}

To prove the completeness theorem, we need the Lindenbaum construction together with a completion technique. For the former, set $L=\mathbf{STL}(\mathcal{R})$. Define $\mathcal{B}(L)$ to be the set of all formulas of the language $\mathcal{L}_{\nabla}$ with the equivalence relation $\equiv$ as $A \equiv B$ iff $L \vdash A \Rightarrow B$ and $L \vdash B \Rightarrow A$. It is clear that $(\mathcal{B}(L)/\equiv, \vdash)$ is a monoidal poset with all finite meets and all finite joins. Moreover it is also a distributive temporal algebra with its canonical $\nabla$ and $\rightarrow$ such that $[A] \otimes \nabla(-)$ is a left adjoint to $[A] \rightarrow (-)$. See Remark \ref{PropertiesOgfLogic}. For the completion technique we have the following representation theorem, presented in Section \ref{Rep}. Here we present it in a slightly stronger form to also address the rule schemes.
\begin{thm}\label{RepFromTemp}
Let $\mathcal{A}=(A, \leq_A, \otimes_A, e_A, \nabla_A, \to_A)$ be a (distributive) temporal algebra. Then there exists a non-commutative spacetime $\mathcal{S}=(\mathscr{X}, \nabla)$ and a (join preserving) temporal algebra embedding $i: \mathcal{A} \to \mathcal{S}$. Moreover, if the algebra has all finite meets, $i$ preserves them and if $\mathcal{A}$ satisfies a rule scheme $\mathcal{R} \subseteq \{N, H, P, F\}$, then so does $\mathcal{S}$. The same is also true for $(wF)$ if $\mathcal{A}$ is distributive.
\end{thm}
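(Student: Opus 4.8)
The plan is to realise $\mathscr{X}$ as a completion of $\mathcal{A}$ and to obtain $\nabla$ by lifting $\nabla_A$. Concretely, I would take $\mathscr{X}=D(\mathcal{A})$, the downset completion, in the general case, and $\mathscr{X}=I(\mathcal{A})$, the ideal completion, in the distributive case. By Theorem \ref{Completions} the canonical map $i\colon\mathcal{A}\to\mathscr{X}$, $i(a)=\{x\mid x\le a\}$, is a strict monoidal embedding; it preserves finite meets whenever $\mathcal{A}$ has them, and in the ideal completion it also preserves finite joins, which accounts for the parenthetical ``(join preserving)''. Since $\nabla_A$ is oplax and, being a left adjoint to $e\to_A(-)$, preserves all existing joins, Theorem \ref{LiftingMonoidalMaps} lifts it to an oplax geometric map $\nabla=(\nabla_A)_!$ on $\mathscr{X}$, given explicitly by $\nabla Y=\{x\mid \exists g\in Y,\ x\le\nabla_A g\}$. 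Thus $\mathcal{S}=(\mathscr{X},\nabla)$ is a non-commutative spacetime, and the lifting identity $\nabla\, i=i\,\nabla_A$ shows that $i$ preserves $\nabla$.

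The heart of the argument is that $i$ also preserves the implication, i.e. $i(a\to_A b)=i(a)\to_{\mathcal{S}}i(b)$. By Theorem \ref{t6-8} the spacetime implication is characterised by the adjunction ``$i(a)\otimes\nabla Y\le i(b)$ iff $Y\le i(a)\to_{\mathcal{S}}i(b)$'', so it suffices to show that $i(a\to_A b)$ enjoys the very same universal property: for every $Y\in\mathscr{X}$,
\[
i(a)\otimes\nabla Y\subseteq i(b)\quad\text{iff}\quad Y\subseteq i(a\to_A b).
\]
For the forward direction, any element of $i(a)\otimes\nabla Y$ lies below $a'\otimes w$ with $a'\le a$ and $w\le\nabla_A y$ for some $y\in Y$, hence below $a\otimes\nabla_A y$, which the temporal adjunction of $\mathcal{A}$ places below $b$ as soon as $y\le a\to_A b$. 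For the converse, testing with the generators $a\in i(a)$ and $\nabla_A y\in\nabla Y$ gives $a\otimes\nabla_A y\le b$ for each $y\in Y$, i.e. $y\le a\to_A b$ by the adjunction in $\mathcal{A}$. This step is the main obstacle, since it is exactly here that the abstract residuation of the temporal algebra must be matched, element by element, with the canonical residuation $\Box(i(a)\Rightarrow i(b))$ of the completed quantale; the correcting factor $F$ that spoiled Theorem \ref{General Representation} is absent precisely because a temporal algebra already carries this adjunction.

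Finally I would transfer the rule schemes through the explicit formula for $\nabla$. If $\mathcal{A}$ satisfies $(P)$, then $\nabla_A g\le g$ together with $Y$ being a downset forces $\nabla Y\subseteq Y$; if it satisfies $(F)$, then $y\le\nabla_A y$ gives $Y\subseteq\nabla Y$. For $(N)$, strictness of $\nabla_A$ makes the lift both lax and oplax, hence a strict geometric morphism, which is $(N)$ for $\mathcal{S}$ by Remark \ref{HforSpace}; for $(H)$, an isomorphism $\nabla_A$ lifts to an order isomorphism of $\mathscr{X}$, that is, a strict geometric isomorphism, which preserves the implication again by Remark \ref{HforSpace}. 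The case $(wF)$ is the reason for the distributivity hypothesis: only in the ideal completion is the bottom preserved, $i(0)=\{0\}=0_{\mathscr{X}}$, and then $\nabla Y=0_{\mathscr{X}}$ forces $\nabla_A g=0$ for every $g\in Y$, whence $g=0$ by $(wF)$ in $\mathcal{A}$ and $Y=0_{\mathscr{X}}$. Collecting these, $i\colon\mathcal{A}\to\mathcal{S}$ is the desired (join preserving, finite-meet preserving) temporal algebra embedding respecting the prescribed schemes.
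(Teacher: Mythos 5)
Your proposal is correct and follows essentially the same route as the paper: the downset (resp.\ ideal) completion, the lift $\nabla=(\nabla_A)_!$, the identity $\nabla\, i=i\,\nabla_A$, and the same scheme-by-scheme transfer for $(N)$, $(H)$, $(P)$, $(F)$ and $(wF)$. The only divergence is in the implication step: the paper writes down the right adjoint explicitly as $I\to J=\{x\in A\mid \forall i\in I\ (i\otimes\nabla_A x\in J)\}$, verifies the adjunction for arbitrary downsets and separately checks that this set is an ideal in the distributive case, whereas you verify the adjunction property only at the candidate $i(a\to_A b)$ against arbitrary $Y$ and invoke uniqueness of right adjoints --- a harmless shortcut that spares you the explicit formula and the ideal-stability check.
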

\begin{proof}
First let us address the case in which the temporal algebra does not necessarily have all the joins. Let $\mathscr{X}=D(\mathcal{A})$ be the downset completion of $\mathcal{A}$ and define 
\[
\nabla I = (\nabla_A)_{!}= \{x \in A| \exists i \in I \; (x \leq \nabla_A i)\}
\]
First observe that $\nabla$ maps downsets to downsets. Secondly, note that by Theorem \ref{LiftingMonoidalMaps}, $\nabla$ is join preserving and since $\nabla_A$ is oplax, $(\nabla_A)_{!}$ is also oplax. Therefore, $\nabla$ has a right adjoint by adjoint functor theorem, Theorem \ref{AFT}. Now let us provide the explicit adjoint. Define
and 
\[
I \to J=\{x \in A| \; \forall i \in I \; (i \otimes \nabla_A x \in J)\}
\]
Again observe that $\to$ maps downsets to downsets. Then note that for any $I \in \mathscr{X}$, the map $I \otimes \nabla (-) \dashv (I \to (-))$ because for any $I, J, K \in \mathscr{X}$ we have
\[
I \otimes \nabla J \subseteq K \; \; \text{iff} \; \; I \subseteq J \to K
\]
For the left to right, note that if $i \in I$, then for any $j \in J$, we have $i \otimes \nabla j \in I \otimes \nabla J \subseteq K$ and hence $i \otimes \nabla j \in K$. Hence, $I \subseteq J \to K$. Conversely, if $I \subseteq J \to K$ and $x \in I \otimes \nabla J$, then there exist $i \in I$ and $j \in J$ such that $x \leq i \otimes \nabla j$. Since $i \in I \subseteq J \to K$, by the definition of the implication we have $i \otimes \nabla j \in K$. Hence, $x \in K$.\\
Finally, define $i(a)= \{x \in A | x \leq a\}$. Then by Theorem \ref{Completions}, the map $i$ is a monoidal poset's embedding that preserves finite meets (if they exist). Moreover, by Theorem \ref{LiftingMonoidalMaps}, $i$ also preserves $\nabla$ i.e., $i\nabla_{A} a=\nabla i(a)$, for any $a \in A$. For implication:
\[
i(a \to b)=\{x \in A | x \leq a \to b\}=\{x \in A | a \otimes \nabla x \leq b\}=
\]
\[
\{x \in A | \forall y \leq a \; (y \otimes \nabla x \leq b)\}=i(a) \to i(b)
\]
Now, let us move to the distributive case. In this case, we have to move from the downset completion to the ideal completion with the same monoidal structure. By Theorem \ref{LiftingMonoidalMaps}, since $\nabla$ is join preserving so does $(\nabla_A)_!$. Moreover, the same $i$ as before is a join preserving monoidal embedding that respects $\nabla$ and finite meets (if they exist). The only thing we have to check is the stability of the ideals under the implication. This implies that the previous proofs for adjunction $I \otimes \nabla (-) \dashv (I \to (-))$, for any ideal $I$ and preservability of implication under $i$ work again. First note that $0 \in I \to J$ because for any $i \in I$, we have $i \otimes \nabla 0=i \otimes 0=0 \in J$. The last equality is the consequence of distributivity of $\mathcal{A}$. And secondly, note that if  $x, y \in I \to J$, then for all $i \in I$, we have $i \otimes \nabla x \in J$ and $i \otimes \nabla y \in J$. Since $J$ is an ideal, $\nabla$ preserves joins and $\mathcal{A}$ is distributive, we have 
\[
[i \otimes \nabla x] \vee [i \otimes \nabla y]=[i \otimes \nabla (x \vee y)] \in J
\] 
which proves that $I \to J$ is an ideal.\\
Finally, for the rule schemes, we have to show that the previous downset or ideal construction respects the rule schemes. For all schemes, except $(wF)$, it is enough to prove the scheme for all downsets. The scheme for the ideals is just its special case. \\

For $(N)$, note that $\nabla_A$ is lax and hence by Theorem \ref{LiftingMonoidalMaps}, $(\nabla_A)_!$ is also lax. Being lax is nothing but satisfying $(N)$.\\
For $(H)$, note that if $\mathcal{A}$ satisfies $(H)$, by Remark \ref{HforAlgebra}, $\nabla$ and $\Box$ are inverses of each other over $\mathcal{A}$. This fact lifts also to $\mathcal{S}$. It is enough to prove that for any ideal $I$, we have $\nabla \Box I=I=\Box \nabla I$. We prove $I \subseteq \nabla \Box I$. The rest is similar. Assume $i \in I$, then $i=\nabla \Box i$. For the sake of readability, let $j=\Box i$. Then $\nabla j=i$. We have $j \in \Box I$ because $e \otimes \nabla j=i \in I$. Therefore, $i=\nabla j \in \nabla \Box I$. Finally, since $\nabla$ has an inverse and is join preserving and strict, it will be a strict geometric isomorphism. The claim follows from Remark \ref{HforSpace}.\\
For $(P)$, we have $\nabla I \subseteq I$ because if $x \in \nabla I$, then there exists $i \in I$ such that $x \leq \nabla i$. Since $\nabla i \leq i$, we have $x \leq i \in I$ which implies $x \in I$. For $(F)$, we have $I \subseteq \nabla I$, because for any $i \in I$ we have $i \leq \nabla i$ which implies $i \in \nabla I$. Finally, for $(wF)$, if $\nabla I=\{0\}$ and $i \in I$, we have $\nabla i \in \nabla I=\{0\}$ which implies $\nabla i=0$. Since $\mathcal{A}$ satisfies $(wF)$, we have $i=0$ that proves $I=\{0\}$. 
\end{proof}

\begin{thm}\label{t4-4}(Completeness) For any rule scheme $\mathcal{R} \subseteq \{N, H, P, F, wF\}$, there exists a non-commutative spacetime $\mathcal{S} \in \mathbf{ST}(\mathcal{R})$ such that if $\mathcal{S} \vDash \Gamma \Rightarrow A$ then $\Gamma \vdash_{\mathbf{STL}(\mathcal{R})} A$. The same is also true for $i\mathbf{ST}(\mathcal{R})$ and $i\mathbf{STL}(\mathcal{R})$.
\end{thm}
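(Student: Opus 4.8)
The plan is to combine the Lindenbaum--Tarski construction for $L=\mathbf{STL}(\mathcal{R})$ with the representation theorem \ref{RepFromTemp}, and then to falsify every non-derivable sequent by a single canonical valuation. First I would form the algebra $\mathcal{B}(L)/\!\equiv$ described just above the statement: as noted there, it is a distributive temporal algebra whose order is provability, whose $\otimes,\wedge,\vee,\nabla,\to$ are the syntactic connectives, and in which $[A]\otimes\nabla(-)$ is left adjoint to $[A]\to(-)$ by the adjunction exhibited in Remark \ref{PropertiesOgfLogic}. The next task is to check that this algebra satisfies each scheme in $\mathcal{R}$ as an algebraic condition: using the axiomatic reformulations collected in the remarks after the definition of the systems, $(N)$ gives strictness of $\nabla$, $(H)$ gives that $\nabla$ preserves the implication (so that $\nabla$ is invertible with inverse $e\to(-)$), $(P)$ and $(F)$ give $\nabla a\le a$ and $a\le\nabla a$, and $(wF)$ gives that $\nabla a=0$ forces $a=0$. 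Each of these is immediate once one rewrites the corresponding rule as the provable sequents already displayed.

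Once the Lindenbaum algebra is in hand, I would apply Theorem \ref{RepFromTemp} to embed it into a non-commutative spacetime $\mathcal{S}=(\mathscr{X},\nabla)$ by a temporal-algebra embedding $i$ that preserves $\otimes$, $e$, $\nabla$, $\to$, all finite meets, and (in the distributive case) all finite joins. The strengthened form of that theorem guarantees that $\mathcal{S}$ again satisfies every scheme in $\mathcal{R}$, so $\mathcal{S}\in\mathbf{ST}(\mathcal{R})$; when all structural rules are present we have $\otimes=\wedge$ and $e=1$ by Remark \ref{3}, so $\mathscr{X}$ is a locale and $\mathcal{S}\in i\mathbf{ST}(\mathcal{R})$. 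Note that the $(wF)$ case is exactly where distributivity of the Lindenbaum algebra is needed, matching the distributivity hypothesis in the $(wF)$ clause of Theorem \ref{RepFromTemp}.

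I would then define the canonical valuation $V$ on $\mathcal{S}$ by $V(p)=i([p])$ on propositional variables, and prove the truth lemma $V(A)=i([A])$ for every formula $A$ by induction on $A$; each inductive step is precisely one of the preservation properties of $i$ recorded above, together with $i$ sending $1,\bot,\top$ to $e,0,1$. With the truth lemma, for any sequent we obtain
\[
(\mathcal{S},V)\vDash\Gamma\Rightarrow A \iff \textstyle\bigotimes_{\gamma\in\Gamma} i([\gamma])\le i([A]).
\]
Since $i$ is a strict monoidal embedding, the left-hand product equals $i\bigl(\bigotimes_{\gamma\in\Gamma}[\gamma]\bigr)$, and since $i$ reflects the order this inequality is equivalent to $\bigotimes_{\gamma\in\Gamma}[\gamma]\le[A]$ in $\mathcal{B}(L)$, i.e.\ to $\Gamma\vdash_{L}A$. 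Hence if $\mathcal{S}\vDash\Gamma\Rightarrow A$, then in particular the canonical $V$ validates it, so $\Gamma\vdash_{\mathbf{STL}(\mathcal{R})}A$; the same argument with the locale-based $\mathcal{S}$ yields the statement for $i\mathbf{STL}(\mathcal{R})$.

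The main obstacle I expect is bookkeeping rather than conceptual: I must confirm that each rule scheme really does become the precise algebraic condition listed in the definition of $\mathbf{ST}(\mathcal{R})$, with the $(H)$ case (where $\nabla$ is forced to be a strict geometric isomorphism, via Remarks \ref{HforAlgebra} and \ref{HforSpace}) and the $(wF)$ case (which relies on distributivity and the ideal completion) being the delicate ones, and that the single embedding $i$ simultaneously reflects the order and preserves all the connectives used in the truth lemma, so that the chain of equivalences above is genuinely reversible.
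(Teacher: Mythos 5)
Your proposal is correct and follows essentially the same route as the paper: take the Lindenbaum algebra of $\mathbf{STL}(\mathcal{R})$ as a distributive temporal algebra satisfying the schemes in $\mathcal{R}$, embed it into a non-commutative spacetime via Theorem \ref{RepFromTemp} (which preserves the schemes), define the canonical valuation $V(p)=i([p])$, and use the truth lemma together with the fact that $i$ is a monoidal embedding to reflect validity back to provability. The only cosmetic difference is that the paper cites Remark \ref{2} (rather than Remark \ref{3}) for the observation that the ideal completion of a meet-monoidal poset yields a locale in the structural case.
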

\begin{proof}
Since the Lindenbaum algebra for $\mathbf{STL}(\mathcal{R})$ is clearly a finitely complete distributive temporal algebra, by Theorem \ref{RepFromTemp}, there exists a non-commutative spacetime $\mathcal{S}=(\mathscr{X}, \nabla)$ and a finite meet and finite join preserving temporal embedding $i : \mathcal{B}(L) \to \mathcal{S}$. Define $V(p)=i([p])$. It is easy to check that for all formula $C \in \mathcal{L}_{\nabla}$, we have $V(C)=i([C])$. Since $(\mathcal{S}, V) \vDash \Gamma \Rightarrow A$ we have $\bigotimes_{\gamma \in \Gamma} V(\gamma) \leq V(A)$. Hence, $\bigotimes_{\gamma \in \Gamma} i([\gamma]) \leq i([A])$. Since $i$ preserves the monoidal structure and is an embedding, $\bigotimes_{\gamma \in \Gamma}[\gamma] \leq [A]$ or equivalently $\Gamma \vdash_{\mathbf{STL}\mathcal{R})} A$. For the structural version, note that by Remark \ref{2}, the ideal construction in Theorem \ref{RepFromTemp}, applied on a monoidal poset with meet structure as its monoidal structure, produces a locale for $\mathscr{X}$.
\end{proof}

One of the main advantages of the spacetime logics over the usual sub-intuitionistic logics is their complete pairs of introduction-elimination rules. This well-behaved nature may find some evidence by the following translation that interprets the seemingly more powerful logics into the weaker ones. The translation is the syntactical version of Theorem \ref{AlgebraicTranslation}. It helps to import what we have in sub-structural and intuitionistic tradition to the spacetime logics. It also shows that $\mathbf{STL}$ and $i\mathbf{STL}$ are in some sense more powerful than the usual $\mathbf{FL}_l$ and $\mathbf{IPC}$, respectively. In this sense the former refine the timeless spatial structure of the latter by bringing the more temporal and hence more expressive power.
\begin{dfn}
Define the translation $(-)^{\nabla}:\mathcal{L} \to \mathcal{L}_{\nabla}$ as the following, where $\mathcal{L}=\{\wedge, \vee, \top, \bot,  1, \otimes, \to\}$:
\begin{itemize}
\item[$\bullet$]
$p^{\nabla}=\nabla \Box p$, $\bot^{\nabla}=\bot$, $\top^{\nabla}=\nabla \Box \top$ and $1^{\nabla}=1$.
\item[$\bullet$]
$(A \wedge B)^{\nabla}=\nabla \Box (A^{\nabla} \wedge B^{\nabla})$.
\item[$\bullet$]
$(A \vee B)^{\nabla}= A^{\nabla} \vee B^{\nabla}$.
\item[$\bullet$]
$(A \otimes B)^{\nabla}=A^{\nabla} \otimes B^{\nabla}$.
\item[$\bullet$]
$(A \to B)^{\nabla}=\nabla (A^{\nabla} \to B^{\nabla})$.
\end{itemize}
\end{dfn}

\begin{thm} For any $\Gamma \cup \{A\} \subseteq \mathcal{L}$,
\begin{itemize}
\item[$(i)$]
$\Gamma \vdash_{\mathbf{FL}_l} A$ iff $ \Gamma^{\nabla} \vdash_{\mathbf{STL}(N)} A^{\nabla}$.
\item[$(ii)$] 
$\Gamma \vdash_{\mathbf{IPC}} A$ iff $ \Gamma^{\nabla} \vdash_{i\mathbf{STL}(N)} A^{\nabla}$. (Originally proved in \cite{AMM}.)
\end{itemize}
\end{thm}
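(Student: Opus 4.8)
The plan is to prove both equivalences semantically, reading the syntactic translation $(-)^{\nabla}$ as the syntactic shadow of the algebraic construction $\mathcal{S}\mapsto\nabla\mathcal{S}$ from Theorem~\ref{AlgebraicTranslation}. Fix a non-commutative spacetime $\mathcal{S}=(\mathscr{X},\nabla)$ in $\mathbf{ST}(N)$. Since $(N)$ forces $\nabla$ to preserve finite multiplications, Theorem~\ref{AlgebraicTranslation} produces the left residuated quantale $\nabla\mathscr{X}=(\nabla[\mathscr{X}],\leq,\otimes,e,\to')$ with $a\to'b=\nabla(a\to_{\mathcal{S}}b)$, together with the facts that $\nabla[\mathscr{X}]$ is closed under joins and $\otimes$, contains $e$ and $0$, that its binary meet is $\nabla\Box(a\wedge b)$, and that $\nabla\Box$ acts as the identity on $\nabla[\mathscr{X}]$ (here $\Box=e\to_{\mathcal{S}}(-)$ is the right adjoint of $\nabla$). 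The heart of the argument is a \emph{translation lemma}: for any assignment $V$ into $\mathscr{X}$, letting $\bar V$ denote the interpretation of $\mathcal{L}$-formulas in the left residuated quantale $\nabla\mathscr{X}$ determined by $\bar V(p)=\nabla\Box V(p)$, one has $V(C^{\nabla})=\bar V(C)$ for every $C\in\mathcal{L}$. First I would check from the clauses above that $V(C^{\nabla})\in\nabla[\mathscr{X}]$ for every $C$; the lemma then follows by induction on $C$, the base and conjunction cases being exactly where the prefix $\nabla\Box$ in $p^{\nabla}$, $\top^{\nabla}$, and inside $(A\wedge B)^{\nabla}$ is needed so that atoms, the top, and meets land in $\nabla[\mathscr{X}]$ and reproduce the meet $\nabla\Box(a\wedge b)$, the residual $\nabla(a\to_{\mathcal{S}}b)$, and the inherited $\vee,\otimes,e,0$ of $\nabla\mathscr{X}$.

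With the lemma in hand, each direction of $(i)$ is a sandwich of soundness and completeness (Theorems~\ref{t4-2} and~\ref{t4-4}) for the two rule sets. For the forward direction, suppose $\Gamma\vdash_{\mathbf{FL}_l}A$. Because $\mathbf{FL}_l=\mathbf{STL}(\{P,F\})$ and $(P),(F)$ together force $\nabla=\mathrm{id}$, the class $\mathbf{ST}(\{P,F\})$ consists precisely of the left residuated quantales; so by soundness $\bigotimes_{\gamma}\bar V(\gamma)\leq\bar V(A)$ holds in every such quantale, in particular in each $\nabla\mathscr{X}$ coming from an $\mathcal{S}\in\mathbf{ST}(N)$. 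By the translation lemma this reads $\bigotimes_{\gamma}V(\gamma^{\nabla})\leq V(A^{\nabla})$ for every $\mathcal{S}\in\mathbf{ST}(N)$ and every $V$, i.e. $\mathbf{ST}(N)\vDash\Gamma^{\nabla}\Rightarrow A^{\nabla}$; completeness of $\mathbf{STL}(N)$ then delivers $\Gamma^{\nabla}\vdash_{\mathbf{STL}(N)}A^{\nabla}$.

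For the converse, suppose $\Gamma^{\nabla}\vdash_{\mathbf{STL}(N)}A^{\nabla}$. By soundness the sequent is valid in every $\mathcal{S}\in\mathbf{ST}(N)$, and I would specialise to the spacetimes $(\mathscr{Q},\mathrm{id})$ for an arbitrary left residuated quantale $\mathscr{Q}$, which lie in $\mathbf{ST}(N)$ since $\mathrm{id}$ trivially preserves multiplications. There $\nabla=\Box=\mathrm{id}$ and $\nabla[\mathscr{Q}]=\mathscr{Q}$, so $\bar V=V$ and the translation lemma collapses to $V(C^{\nabla})=V(C)$; hence validity of $\Gamma^{\nabla}\Rightarrow A^{\nabla}$ becomes $\bigotimes_{\gamma}V(\gamma)\leq V(A)$ in every left residuated quantale. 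Completeness of $\mathbf{FL}_l$ then yields $\Gamma\vdash_{\mathbf{FL}_l}A$.

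Part $(ii)$ is entirely parallel over spacetimes with meet monoidal structure: for $\mathcal{S}\in i\mathbf{ST}(N)$ the algebra $\nabla\mathscr{X}$ is a complete Heyting algebra by the last clause of Theorem~\ref{AlgebraicTranslation}, $i\mathbf{ST}(\{P,F\})$ is the class of locales, and one restricts $(-)^{\nabla}$ to the intuitionistic connectives (identifying $\otimes,1$ with $\wedge,\top$ as in Remark~\ref{3}); the same lemma-plus-soundness-completeness argument gives the equivalence with $i\mathbf{STL}(N)$, recovering the result of \cite{AMM}. A purely proof-theoretic induction on derivations is possible but far more laborious, as it would require simulating every rule of $\mathbf{FL}_l$ and of $\mathbf{LJ}$ after translation using the derived behaviour of $\nabla$ and $\Box$. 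I expect the only genuine obstacle to be the translation lemma, and within it the atom, constant, and conjunction cases: one must verify that the $\nabla\Box$ prefixes are exactly what keeps the interpretation inside $\nabla[\mathscr{X}]$ and reproduces the Heyting/residuated-quantale meet $\nabla\Box(a\wedge b)$ and implication $\nabla(a\to_{\mathcal{S}}b)$, while everything else is bookkeeping already licensed by Theorem~\ref{AlgebraicTranslation}.
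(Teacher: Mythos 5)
Your proposal is correct, but it takes a genuinely different route from the paper. The paper's proof is entirely syntactic: for the forward direction it first shows that every $A^{\nabla}$ is provably of the form $\nabla A'$ in $\mathbf{STL}(N)$ (so that $\Gamma^{\nabla}\vdash B$ yields $\Gamma^{\nabla}\vdash\nabla\Box B$), and then runs an induction on $\mathbf{FL}_l$-derivations, simulating each rule; for the converse it simply observes that $\mathbf{STL}(N)\subseteq\mathbf{STL}(N,P,F)=\mathbf{FL}_l$ and that $B^{\nabla}$ and $B$ are provably equivalent there, with no semantics involved. You instead sandwich the algebraic content of Theorem \ref{AlgebraicTranslation} between the soundness and completeness theorems (Theorems \ref{t4-2} and \ref{t4-4}), with the translation lemma $V(C^{\nabla})=\bar V(C)$ as the key step; the checks you flag are exactly the right ones, and they all go through under $(N)$ ($\nabla[\mathscr{X}]$ is closed under $\vee$ and $\otimes$ and contains $e$ and $0$, the meet of $\nabla\mathscr{X}$ is $\nabla\Box(a\wedge b)$, $\nabla\Box$ is the identity on $\nabla[\mathscr{X}]$, and $\to'$ is by uniqueness of adjoints the canonical implication of the quantale $\nabla\mathscr{X}$, so $(\nabla\mathscr{X},\mathrm{id})$ really is an object of $\mathbf{ST}(P,F)$). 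What each approach buys: the paper's argument is effective, exhibiting an explicit proof transformation, and its backward direction is essentially free, whereas your semantic detour through $(\mathscr{Q},\mathrm{id})$ for that direction is correct but heavier than necessary; on the other hand your argument makes the conceptual point explicit --- the translation is the syntactic shadow of $\mathcal{S}\mapsto\nabla\mathcal{S}$ --- and replaces the rule-by-rule case analysis by a single induction on formulas, at the price of depending on the completeness theorems and hence on the completion constructions behind them. Two small points to tidy up: in part $(ii)$ the language still contains $\otimes$ and $1$, so rather than ``restricting'' the translation you should note that the $\otimes$- and $\wedge$-clauses agree semantically in the structural case precisely because $(N)$ makes $\nabla[\mathscr{X}]$ closed under the meet and $\nabla\Box$ fixes it; and the top of $\nabla\mathscr{X}$ is $\nabla 1=\nabla\Box 1$ (since $\Box 1=1$), which is what matches the clause $\top^{\nabla}=\nabla\Box\top$.
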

\begin{proof}
We will prove $(i)$, the proof for $(ii)$ is the same. For that matter, we will first prove a claim that for any formula $A \in \mathcal{L}$, there exists a formula $A' \in \mathcal{L}_{\nabla}$ such that $A^{\nabla} \vdash_{\mathbf{STL}(N)} \nabla A'$ and $\nabla A' \vdash_{\mathbf{STL}(N)} A^{\nabla}$. The proof for the claim is by induction on the structure of $A$. For atomic cases, considering the fact that $\nabla \bot$ is equivalent to $\bot$, there is nothing to prove. The claim for conjunction and implication is clear by definition of the translation. Finally, for $\otimes$ and $\vee$, note that the translation $(-)^{\nabla}$ commutes with these connectives. Therefore, if there exist $A'$ and $B'$ for $A^{\nabla}$ and $B^{\nabla}$, respectively, for $A \otimes B$ it is enough to consider $A' \otimes B'$. The reason is that $\nabla$ commutes with $\otimes$ because of $(N)$. For $\vee$ the same strategy works. Therefore, the existence of $A'$ is proved. This property implies the following useful fact: For any $B \in \mathcal{L}_{\nabla}$, if $\Gamma^{\nabla} \vdash_{\mathbf{STL}(N)} B$, then $\Gamma^{\nabla} \vdash_{\mathbf{STL}(N)} \nabla \Box B$. The reason is the following. Since the formula $\bigotimes \Gamma^{\nabla}$ is equivalent to $(\bigotimes \Gamma)^{\nabla}$ and the latter is also equivalent to $\nabla C$ for some $C \in \mathcal{L}_{\nabla}$, it is enough to prove the claim for $\nabla C$. Now, since $\nabla C \vdash_{\mathbf{STL}(N)} B$, by $(L1)$ we have $1, \nabla C \vdash_{\mathbf{STL}(N)} B$. By implication introduction we have $C \vdash_{\mathbf{STL}(N)} \Box B$ and hence by the rule $(\nabla)$, we have $\nabla C \vdash_{\mathbf{STL}(N)} \nabla \Box B$.\\
Coming back to the proof of the theorem, for the soundness part it is enough to use an induction on the $\mathbf{FL}_l$-proof length of $\Gamma \Rightarrow A$. For axioms, all cases are clear, except $\Gamma \Rightarrow \top$. For this case we have to prove $\Gamma^{\nabla} \vdash \nabla \Box \top$ which is clear from what we observed above.\\
For the conjunction rule $(R\wedge)$, assume $\Gamma \Rightarrow A \wedge B$ is proved via $\Gamma \Rightarrow A$ and $\Gamma \Rightarrow B$. Then by IH, we have $\Gamma^{\nabla} \Rightarrow A^{\nabla}$ and $\Gamma^{\nabla} \Rightarrow B^{\nabla}$. Then $\Gamma^{\nabla} \Rightarrow A^{\nabla} \wedge B^{\nabla}$. Therefore, by what we have above $\Gamma^{\nabla} \Rightarrow \nabla \Box (A^{\nabla} \wedge B^{\nabla})$. For the conjunction rule $(L\wedge)$, assume $\Gamma, A \wedge B, \Sigma \Rightarrow C$ is proved via $\Gamma, A, \Sigma \Rightarrow C$. Then by IH, $\Gamma^{\nabla}, A^{\nabla}, \Sigma^{\nabla} \Rightarrow C^{\nabla}$. Then $\Gamma^{\nabla}, A^{\nabla} \wedge B^{\nabla}, \Sigma^{\nabla} \Rightarrow C^{\nabla}$. Since $\nabla \Box (A^{\nabla} \wedge B^{\nabla}) \Rightarrow A^{\nabla} \wedge B^{\nabla}$, we have $\Gamma^{\nabla}, (A \wedge B)^{\nabla}, \Sigma^{\nabla} \Rightarrow C^{\nabla}$.\\
For implication rule $(R\to)$, assume $\Gamma \Rightarrow A \to B$ is proved via $A, \Gamma \Rightarrow B$. Then by IH, we have $A^{\nabla}, \Gamma^{\nabla} \Rightarrow B^{\nabla}$. Since $\Gamma^{\nabla}$ is equivalent to $(\bigotimes \Gamma)^{\nabla}$, it is also equivalent to $\nabla C$ for some $C$. We have $A^{\nabla}, \nabla C \Rightarrow B^{\nabla}$. Hence, $ C \Rightarrow (A^{\nabla} \to B^{\nabla})$. Hence, by $(\nabla)$ we have $ \nabla C \Rightarrow \nabla (A^{\nabla} \to B^{\nabla})$. Since $\bigotimes \Gamma^{\nabla}$ is equivalent to $\nabla C$, we have $ \Gamma^{\nabla} \Rightarrow \nabla (A^{\nabla} \to B^{\nabla})$. For implication rule $(L \to)$, assume $\Pi, \Gamma, (A \to B), \Sigma \Rightarrow C$ is proved via $\Gamma \Rightarrow A$ and $\Pi, B, \Sigma \Rightarrow C$. Then by IH, $\Gamma^{\nabla} \Rightarrow A^{\nabla}$ and $\Pi^{\nabla}, B^{\nabla}, \Sigma^{\nabla} \Rightarrow C^{\nabla}$. Hence, 
$\Pi^{\nabla}, \Gamma^{\nabla}, \nabla (A^{\nabla} \to B^{\nabla}), \Sigma^{\nabla} \Rightarrow C^{\nabla}$.\\
For completeness, note that if $\Gamma^{\nabla} \Rightarrow A^{\nabla}$ is provable in $\mathbf{STL}(N)$, then it is also provable in the greater logic $\mathbf{FL}_l=\mathbf{STL}(N, P, F)$. Since for any $B \in \mathcal{L}$, the formulas $B^{\nabla}$ and $B$ are equivalent in $\mathbf{STL}(N, P, F)$, the sequent $\Gamma \Rightarrow A$ is also provable in $\mathbf{FL}_l$.
\end{proof}

\section{Kripke Models} \label{KripkeModels}
In this section we will focus on the structural logics of spacetime and their Kripke semantics. This semantics is essentially the usual Kripke semantics for the intuitionistic modal and implication logics \cite{Sim}, \cite{Servi} and \cite{LitViss}. However, to also address $\nabla$, we will add a natural forcing condition using the same accessibility relation that the model uses for $\Box$. In this sense, the structural logics of spacetime are actually the result of a faithful extension of the language and logics to have a better reflection of the Kripke models into the pure syntax.
\begin{dfn}
By a Kripke model for the language $\mathcal{L}_{\nabla}$, we mean a tuple $\mathcal{K}=(W, \leq, R, V)$ where $(W, \leq)$ is a poset, $R \subseteq W \times W$ is a relation over $W$ (not necessarily transitive or reflexive) compatible with $\leq$, i.e., for all $u, u', v, v' \in W$ if $(u, v) \in R$ and $u' \leq u$ and $v \leq v'$ then $(u', v') \in R$ and $V: At(\mathcal{L}_{\nabla}) \to U((W, \leq))$ where $At(\mathcal{L}_{\nabla})$ is the set of atomic formulas of $\mathcal{L}_{\nabla}$ and $U((W, \leq))$ is the set of all upsets of $(W, \leq)$. Define the forcing relation as usual using the relation $R$ and for the $\nabla$ let $u \Vdash \nabla A$ if there exists $v \in W$ such that $(v, u) \in R$ and $v \Vdash A$. A Kripke model is called normal if there exists an order preserving function $\pi : W \to W$ such that $(u, v) \in R$ iff $u \leq \pi(v)$. It is clear that if this $\pi$ exists, it would be unique. Finally, a sequent $\Gamma \Rightarrow A$ is valid in a Kripke model if for all $w \in W$, $\forall B \in \Gamma \; (w \Vdash B)$ implies $w \Vdash A$.
\end{dfn}

\begin{lem}(Monotonicity Lemma)
For any formula $A \in \mathcal{L}_{\nabla}$, any Kripke model $\mathcal{K}=(W, \leq, R, V)$ and any $u, v \in W$, if $u \leq v$ and $u \Vdash A$ then $v \Vdash A$.
\end{lem}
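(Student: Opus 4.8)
The plan is to prove the statement by induction on the structure of the formula $A$, establishing the stronger reading that for every $A$ the truth set $\{w \in W \mid w \Vdash A\}$ is an upset of $(W,\leq)$. So throughout I fix $u \leq v$ and assume $u \Vdash A$, aiming to derive $v \Vdash A$.

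First I would dispatch the base and propositional cases, which carry no real content. For atomic $A = p$ the claim is immediate, since by definition $u \Vdash p$ means $u \in V(p)$ and $V(p) \in U((W,\leq))$ is an upset, so $u \leq v$ gives $v \in V(p)$. The constants $\top$, $\bot$ and $1$ are forced at a node by a fixed convention independent of the node, hence are trivially persistent. For $A \wedge B$, $A \vee B$ and $A \otimes B$ (the last forcing exactly like $\wedge$ in the structural setting), the forcing clause is defined pointwise in terms of the immediate subformulas, so the induction hypothesis applied to each subformula closes the case at once, using only that a finite meet or join of upsets is again an upset.

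The two cases that genuinely use the compatibility of $R$ with $\leq$ are $\nabla A$ and $A \to B$, and these I expect to be the crux. For $\nabla A$: from $u \Vdash \nabla A$ there is a $w$ with $(w,u) \in R$ and $w \Vdash A$. Since $u \leq v$, the compatibility condition, taken in its order-preserving (second-coordinate) direction, yields $(w,v) \in R$, so the same $w$ witnesses $v \Vdash \nabla A$. For $A \to B$: to show $v \Vdash A \to B$, take any $t$ with $(v,t) \in R$ and $t \Vdash A$; now I must use compatibility in its order-reversing (first-coordinate) direction, so from $(v,t) \in R$ and $u \leq v$ I obtain $(u,t) \in R$, whence $u \Vdash A \to B$ together with $t \Vdash A$ gives $t \Vdash B$. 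As $t$ was arbitrary, $v \Vdash A \to B$.

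The main obstacle, such as it is, lies precisely in keeping straight the opposite variances of $R$ in its two arguments: the compatibility axiom is order-reversing in the first coordinate and order-preserving in the second, so the $\nabla$ clause (a diamond reading $R$ backwards) exploits the covariant direction while the $\to$ clause (a box-like reading evaluating over $R$-successors) exploits the contravariant direction. Once this observation is in place, every inductive step closes routinely, and no further hypotheses on $R$ — such as transitivity or reflexivity — are needed.
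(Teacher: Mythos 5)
Your proof is correct and follows the same routine structural induction as the paper, with an identical treatment of the crucial $\nabla$ case via the second-coordinate monotonicity of the compatibility condition. The only difference is that you also spell out the $\to$ case explicitly (using compatibility in the first coordinate), which the paper subsumes under ``routine''; your variance analysis there is accurate.
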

\begin{proof}
The proof is a routine induction on the structure of $A$. The only case to mention is when $A=\nabla B$. Then if $u \Vdash \nabla B$, there exist $u' \in W$ such that $(u', u) \in R$ and $u' \Vdash B$. Since $u \leq v$ and $R$ is compatible with $\leq$, we have $(u', v) \in R$. Therefore, $v \Vdash \nabla A$.
\end{proof}

\begin{rem}
Note that in a normal Kripke model $w \Vdash \nabla A$ iff $\pi(w) \Vdash A$. One direction is clear, for the other, if there exists $u \in W$ such that $(u, w) \in R$ and $u \Vdash A$, then since $u \leq \pi(w)$, by the monotonicity lemma we have $\pi(w) \Vdash A$. This means that the normal Kripke models are the models in which we have a canonical way to witness the existential quantifier in the forcing condition of $\nabla$.
\end{rem}

\begin{dfn}
For any rule scheme in the set $\{N, H, P, F, wF\}$, we define a corresponding condition on a Kripke model as:
\begin{itemize}
\item[$(N)$] The model is normal.
\item[$(H)$] The model is normal and its $\pi$ is a poset isomorphism.
\item[$(P)$] $R \subseteq \; \leq$. For a normal model, it is equivalent to $\forall w \in W \; (\pi(w) \leq w)$.
\item[$(F)$] $R$ is reflexive, i.e., for all $w \in W$ we have $(w, w) \in R$. For a normal model, it is equivalent to $\forall w \in W \; (w \leq \pi(w))$.
\item[$(wF)$] $R$ is serial, i.e., for all $u \in W$ there exists $v \in W$ such that $(u, v) \in R$. For a normal model, it is equivalent to $\forall u \in W \exists v \in W \; (u \leq \pi(v))$.
\end{itemize}
Moreover, if $\mathcal{R} \subseteq \{N, H, P, F, wF\}$, by a $\mathbf{K}(\mathcal{R})$-Kripke model we mean a model satisfying the conditions corresponding to all the schemes in $\mathcal{R}$.
\end{dfn}

\begin{thm}(Soundness)
For any rule scheme $\mathcal{R} \subseteq \{N, H, P, F, wF\}$, the logic $i\mathbf{STL}(\mathcal{R})$ is sound for all $\mathbf{K}(\mathcal{R})$-Kripke models. 
\end{thm}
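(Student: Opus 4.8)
The plan is to argue by induction on the length of a derivation in $i\mathbf{STL}(\mathcal{R})$, checking that every axiom is valid in an arbitrary $\mathbf{K}(\mathcal{R})$-Kripke model and that every rule preserves validity. Throughout I read the left of a sequent conjunctively, so that $\Gamma \Rightarrow A$ is valid exactly when every world forcing all of $\Gamma$ forces $A$; since we work with the structural logics, $\otimes$ and $1$ behave as $\wedge$ and $\top$ and may be treated as such. The two forcing clauses doing the real work are the backward clause $w \Vdash \nabla A$ iff $\exists v\,((v,w)\in R \text{ and } v \Vdash A)$, and the forward implication clause $w \Vdash A \to B$ iff for every $v$ with $(w,v)\in R$, $v \Vdash A$ implies $v \Vdash B$; the latter is the clause induced by reading $\to$ as $\Box(A \Rightarrow B)$ with $\Box$ the right adjoint of the backward $\nabla$, and, thanks to the compatibility of $R$ with $\leq$, it already absorbs the Heyting quantification over $\leq$-successors. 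I will repeatedly use the Monotonicity Lemma and, in the normal case, the equivalence $w \Vdash \nabla A$ iff $\pi(w) \Vdash A$ noted earlier.

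The propositional axioms and rules (for $A \Rightarrow A$, $\top$, $\bot$, conjunction, disjunction, $1$, and cut) are handled exactly as in intuitionistic Kripke semantics, and the structural rules are immediate from the conjunctive reading of contexts. For the genuinely new rules I would proceed as follows. For $(\nabla)$, if $A \Rightarrow B$ is valid and $w \Vdash \nabla A$ with backward witness $v$, then $v \Vdash B$ and the same $v$ witnesses $w \Vdash \nabla B$. For $Oplax$, a backward witness $v$ for $w \Vdash \nabla(A \wedge B)$ forces both $A$ and $B$, hence simultaneously witnesses $w \Vdash \nabla A$ and $w \Vdash \nabla B$, and the premise yields $C$. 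For $R\to$, given $w$ forcing $\Gamma$, any $v$ with $(w,v)\in R$ that forces $A$ also forces every $\nabla \gamma$ (take $w$ itself as the backward witness, using $(w,v)\in R$ and $w \Vdash \gamma$), so the premise $A, \nabla\Gamma \Rightarrow B$ gives $v \Vdash B$, i.e.\ $w \Vdash A \to B$. For $L\to$, if $w$ forces the whole context, a backward witness $v$ for $w \Vdash \nabla(A \to B)$ satisfies $v \Vdash A \to B$, and since $(v,w)\in R$ the forward clause applied at $w$ converts $w \Vdash A$ (from the first premise and $w \Vdash \Gamma$) into $w \Vdash B$; the second premise then delivers $C$.

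It remains to verify the rule schemes against their frame conditions. $(P)$ follows because $R \subseteq\, \leq$ lets Monotonicity move a backward witness $v \leq w$ forward; $(F)$ follows because reflexivity of $R$ makes $w$ its own backward witness; $(wF)$ follows because seriality produces a $v$ with $(w,v)\in R$, which forces $\nabla A$ whenever $w \Vdash A$, so the premise $\nabla A \Rightarrow \bot$ forces $v \Vdash \bot$, an impossibility, making $A \Rightarrow \bot$ vacuously valid. The normality case $(N)$ is where the canonical witness is essential: from $w \Vdash \nabla\gamma$ for all $\gamma$ we get $\pi(w) \Vdash \gamma$ for all $\gamma$ uniformly, the premise gives $\pi(w) \Vdash A$, and hence $w \Vdash \nabla A$; a non-normal model could force each $\nabla\gamma$ through a different witness, which is exactly why $(N)$ demands normality.

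The main obstacle is $(H)$, whose soundness needs the full strength of $\pi$ being a poset isomorphism. Using normality it suffices to show that $w$ forcing every $\nabla A_i \to \nabla B_i$ forces $\pi(w)$ to force every $A_i \to B_i$; then $\pi(w)$ forces $\Gamma$ and all $A_i \to B_i$, the premise yields $\pi(w)\Vdash C$, and normality gives $w \Vdash \nabla C$. To see the reduction, take $u$ with $(\pi(w),u)\in R$, i.e.\ $\pi(w)\leq\pi(u)$, and suppose $u \Vdash A_i$. Order-reflection of $\pi$ gives $w \leq u$, and surjectivity provides $v$ with $\pi(v)=u$; then $(w,v)\in R$ because $w \leq u = \pi(v)$, and $v \Vdash \nabla A_i$ because $\pi(v)=u \Vdash A_i$. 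Applying $w \Vdash \nabla A_i \to \nabla B_i$ at $v$ yields $v \Vdash \nabla B_i$, i.e.\ $\pi(v) = u \Vdash B_i$, as required. The delicate point throughout is tracking which relation ($\leq$ or $R$) each clause uses and exploiting the $\pi$-translation to convert the existential witnesses of $\nabla$ into the canonical world $\pi(w)$; everything else is bookkeeping.
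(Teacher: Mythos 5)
Your proof is correct, but it takes a genuinely different route from the paper. You argue directly by induction on derivations, verifying each axiom and rule against the Kripke forcing clauses; the paper instead reduces Kripke soundness to the already-proved topological soundness (Theorem \ref{t4-2}) by packaging a Kripke model $\mathcal{K}=(W,\leq,R,V)$ as the spacetime $\mathcal{S}_{\mathcal{K}}=(U(W,\leq),\nabla_{\mathcal{K}})$ of Example \ref{KripkeToSpacetime}, checking that validity is preserved and that each frame condition translates into the corresponding algebraic condition on $\nabla_{\mathcal{K}}$ (normality gives $\nabla_{\mathcal{K}}=\pi^{-1}$, hence a strict geometric map for $(N)$ and an isomorphism for $(H)$, etc.). The paper's reduction buys economy --- the propositional and implication rules never have to be re-verified --- and, as the author says explicitly, it exhibits Kripke models as a special case of topological models, which is reused for completeness. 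Your direct argument buys self-containedness and transparency: it makes visible exactly where each frame condition is consumed, and your observations that $(w,v)\in R$ lets $w$ serve as the backward witness for every $\nabla\gamma$ simultaneously in $R\!\to$, that $(N)$ fails without a \emph{canonical} witness $\pi(w)$, and that $(H)$ needs both surjectivity and order-reflection of $\pi$ to transport $\nabla A_i\to\nabla B_i$ at $w$ down to $A_i\to B_i$ at $\pi(w)$, are precisely the delicate points; all of them check out, including your implicit use of the compatibility of $R$ with $\leq$ to make the simple forward clause for $\to$ monotone.
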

\begin{proof}
Our strategy is reducing the soundness for Kripke models to soundness for topological models. It is also possible to prove it directly. However, we follow this strategy to also show how Kripke models must be considered as the special case of the topological models. For that purpose, we show how to assign a topological model to a Kripke model with the same valid sequents. Moreover, we will show that this construction respects the schema conditions. Let $\mathcal{K}=(W, \leq, R, V)$ be a Kripke model. Define the spacetime $\mathcal{S}_{\mathcal{K}}=(U(W, \leq), \nabla_{\mathcal{K}})$ as in Example \ref{KripkeToSpacetime} by 
\[
\nabla_{\mathcal{K}} P=\{w \in W | \exists u \in P \; \text{such that} \; (u, w) \in R\}
\]
For any formula $B \in \mathcal{L}_{\nabla}$ define $[B]$ as the set $\{w \in W | w \Vdash B\}$. By the monotonicity lemma, $[B]$ is an upset of $W$. If we define the topological valuation $\bar{V}(p)=V(p)$, it is easy to see that $\bar{V}(B)=[B]$ for any formula $B \in \mathcal{L}_{\nabla}$. Hence, for any sequent $\Gamma \Rightarrow A$, it is valid in $(U(W, \leq), \nabla_{\mathcal{K}}, \bar{V})$ iff $\bigwedge_{\gamma \in \Gamma} \bar{V}(\gamma) \subseteq \bar{V}(A)$ iff $\bigcap_{\gamma \in \Gamma} [\gamma] \subseteq [A]$ which is nothing but the validity of $\Gamma \Rightarrow A$ in $\mathcal{K}$. \\
It is remaining to prove the preservation of the schema conditions. First note that for $(N)$, the existence of $\pi$ means that $\nabla_{\mathcal{K}}=\pi^{-1}$. Therefore, $\nabla$ preserves all intersections and hence is a strict geometric map. For $(H)$, since $\pi$ is an order isomorphism, it has an inverse $\rho$. Then $\pi^{-1}, \rho^{-1}:  U(W, \leq) \to U(W, \leq)$ are each other's inverses. Hence, $\nabla_{\mathcal{K}}=\pi^{-1}: U(W, \leq) \to U(W, \leq)$ is a strict geometric isomorphism. For $(P)$, we have $\nabla_{\mathcal{K}} P \subseteq P$. The reason is that if $w \in \nabla_{\mathcal{K}} P$, there exist $u \in W$ such that $(u, w) \in R$ and $u \in P$. Since $R \subseteq \; \leq$, we have $u \leq w$. Since $P$ is an upset we have $w \in P$. For $(F)$, we have $P \subseteq \nabla_{\mathcal{K}} P$ because if $w \in P$ then since $(w, w) \in R$ we have $w \in \nabla_{\mathcal{K}} P$. And finally, for $(wF)$, if $\nabla_{\mathcal{K}} P=\emptyset$, then $P=\emptyset$ because if $w \in P$ then since $R$ is serial, there exists $u \in W$ such that $(w, u) \in R$ which means that $u \in \nabla_{\mathcal{K}} P=\emptyset$. This is a contradiction and hence $P=\emptyset$. 
\end{proof}

\begin{dfn}
Let $\mathcal{A}=(A, \leq, \wedge, 1, \to)$ be a strong algebra where $(A, \leq)$ is finitely cocomplete. Then $\mathcal{A}$ is called join internalizing if $(a \vee b \to c)=(a \to c) \wedge (b \to c)$, for every $a, b, c \in A$. 
\end{dfn}
For completeness, we need the following representation theorem, presented before as Theorem \ref{KripkeRepresentationBaby}. The proof is essentially the canonical extension construction in \cite{CJ1} expanded to also cover both weaker and stronger cases. In fact, in \cite{AMM}, we modified this construction to also address the operator $\nabla$. Since \cite{AMM} is not accessible yet, we restate the full details and we add the proofs for some other cases that are absent in \cite{AMM}. 
\begin{thm} \label{KripkeRepresentation}
For any strong algebra $\mathcal{A}=(A, \leq, \wedge, 1, \to)$ that internalizes its monoidal structure [not necessarily its closed monoidal structure if it has any], there exists a Kripke frame $\mathcal{K}$ and a strong algebra embedding $i: \mathcal{A} \to \mathcal{S}_{\mathcal{K}}$. Moreover, if $\mathcal{A}$ is distributive and its implication internalizes the joins, the map $i$ can be chosen join preserving, as well. Finally, if $\mathcal{A}$ is a reduct of a temporal algebra, $i$ also preserves $\nabla$ and for any rule scheme $\mathcal{R} \subseteq \{N, H, P, F, wF\}$, if $\mathcal{A}$ satisfies $\mathcal{R}$, then so does $\mathcal{K}$.
\end{thm}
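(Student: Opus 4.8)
The plan is to run a canonical (prime) filter construction in the spirit of the canonical extension of \cite{CJ1}, taking filters of $\mathcal{A}$ as the points of the frame. Concretely, I would let $W$ be the set of all filters of $\mathcal{A}$ — and the set of all \emph{prime} filters when $\mathcal{A}$ is distributive and $\to$ internalizes joins, so that joins can be separated — order $W$ by inclusion, and set $i(a)=\{P\in W\mid a\in P\}$. That $i$ is a strict monoidal (meet- and $1$-preserving) embedding, and join preserving in the distributive case, is exactly the content of the (prime) filter facts recorded in the Preliminaries: every filter contains $1$ and is closed under $\wedge$, filters separate the order, and prime filters additionally separate joins. So the only genuine work is to choose the accessibility relation $R$ so that $i(a\to b)=i(a)\to_{\mathcal{S}_{\mathcal{K}}} i(b)$, and, in the temporal case, so that $i$ also transports $\nabla_A$ to $\nabla_{\mathcal{K}}$.

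For the relation I would split according to whether a $\nabla_A$ is available. When $\mathcal{A}$ is a reduct of a temporal algebra I would define
\[
R(P,Q)\iff \nabla_A[P]\subseteq Q, \quad\text{i.e.}\quad \forall c\,(c\in P\Rightarrow \nabla_A c\in Q).
\]
The virtue of this choice is that ``$\nabla_A[P]\subseteq Q$'' is downward closed in $P$ and \emph{upward} closed in $Q$, so $R$ is automatically compatible with $\subseteq$ and $\mathcal{S}_{\mathcal{K}}=(U(W,\subseteq),\nabla_{\mathcal{K}})$ is well defined. Unwinding $\to_{\mathcal{S}_{\mathcal{K}}}=\Box_{\mathcal{K}}(\,\cdot\Rightarrow\cdot\,)$ through $\Box_{\mathcal{K}}$ being the right adjoint of $\nabla_{\mathcal{K}}$ and the compatibility of $R$, membership $P\in i(a)\to_{\mathcal{S}_{\mathcal{K}}}i(b)$ reduces to the forcing clause $\forall Q\,(R(P,Q)\wedge a\in Q\Rightarrow b\in Q)$. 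The forward inclusion is then a one-line use of the adjunction $a\wedge\nabla_A(-)\dashv a\to_A(-)$: from $a\to b\in P$ we get $\nabla_A(a\to b)\in Q$, and $a\wedge\nabla_A(a\to b)\le b$ forces $b\in Q$. When no $\nabla_A$ is at hand, for the bare embedding it suffices to take the discrete order (equality) on $W$ — under which any relation is trivially compatible and $U(W,\leq)$ is the full powerset — together with the implication relation $R(P,Q)\iff \forall a,b\,(a\to b\in P\wedge a\in Q\Rightarrow b\in Q)$, whose forward direction is immediate.

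The heart of the argument — and the step I expect to be the main obstacle — is the backward direction, an existence/separation lemma: assuming $a\to b\notin P$ I must produce a point $Q$ with $R(P,Q)$, $a\in Q$ and $b\notin Q$. For the implication relation the witness is explicit: $Q=\{d\mid a\to d\in P\}$ is a filter precisely because $\to$ internalizes the meet, so $(a\to d)\wedge(a\to d')=a\to(d\wedge d')$; it contains $a$ since $a\to a=1\in P$, omits $b$ since $a\to b\notin P$, and satisfies $R(P,Q)$ by the transitivity axiom $(a\to c)\wedge(c\to d)\le a\to d$. For the $\nabla_A$-relation I would take the filter generated by $\{a\}\cup\nabla_A[P]$, prove $b$ lies outside it using the oplaxness of $\nabla_A$ together with the adjunction, and then, in the distributive case, extend it to a prime filter omitting $b$ by the prime filter theorem. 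Crucially the relation clause $\nabla_A[P]\subseteq Q$ survives enlarging $Q$, so primeness and the relation can be arranged simultaneously; this reconciliation of primeness with the frame relation, where join-internalization is indispensable, is the delicate point.

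Finally I would check $\nabla$-preservation and the rule schemes for the $\nabla_A$-relation. The inclusion $\nabla_{\mathcal{K}}i(a)\subseteq i(\nabla_A a)$ is immediate from the definition of $R$; for the converse, given $\nabla_A a\in Q$ I would verify that $\{c\mid\nabla_A c\notin Q\}$ is an ideal — using that $\nabla_A$ preserves joins and $Q$ is prime — disjoint from the principal filter $\uparrow a$, and extend $\uparrow a$ to a prime $P$ missing that ideal, giving $a\in P$ and $\nabla_A[P]\subseteq Q$. Each scheme then corresponds to a frame condition through $\pi(Q)=\nabla_A^{-1}[Q]$: $(N)$ gives normality, with $\pi(Q)$ a filter exactly when $\nabla_A$ preserves meets; $(H)$ gives normality with $\pi$ a poset isomorphism by Remark \ref{HforAlgebra}; $(P)$ gives $R\subseteq\,\le$ from $\nabla_A a\le a$; $(F)$ gives reflexivity of $R$ from $a\le\nabla_A a$; and $(wF)$ gives seriality, the last again needing distributivity for the prime extension.
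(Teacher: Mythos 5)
Your overall strategy --- filters (resp.\ prime filters) as points, $i(a)=\{P\mid a\in P\}$, the implication-based relation in the absence of $\nabla_A$ and the relation $\nabla_A[P]\subseteq Q$ in its presence --- is exactly the paper's, and your treatment of the non-distributive cases and of the distributive temporal case is sound. In the latter, your use of the prime filter theorem (separating the filter generated by $\{a\}\cup\nabla_A[P]$ from the principal ideal of $b$, and $\uparrow\! a$ from the ideal $\{c\mid\nabla_A c\notin Q\}$) is a clean substitute for the paper's Zorn-plus-direct-primality arguments in its Case IV, and it works precisely because, as you note, the clause $\nabla_A[P]\subseteq Q$ survives enlarging $Q$.

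The gap is the distributive, join-internalizing case \emph{without} a temporal reduct, which the theorem asserts as a separate clause (``the map $i$ can be chosen join preserving'') and which the paper actually uses later (e.g.\ for $\mathbf{EKPC}$). There the points must be prime filters for $i$ to preserve joins, the only available relation is $R(P,Q)$ iff $\forall a,b\,(a\to b\in P\ \wedge\ a\in Q\ \Rightarrow\ b\in Q)$, and this clause is \emph{not} upward closed in $Q$: enlarging $Q$ introduces new antecedents $a\in Q$ whose consequents need not follow. So your explicit witness $Q=\{d\mid a\to d\in P\}$ --- which need not be prime --- cannot simply be extended by the prime filter theorem, and your proposal offers no replacement. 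The paper's Case III fills this with a dedicated maximality argument: Zorn applied to $\Sigma=\{S\mid (P,S)\in\mathcal{R},\ a\in S,\ b\notin S\}$ (closed under unions of chains), followed by a proof that a maximal $M\in\Sigma$ is prime, in which the candidate proper extensions are taken of the special form $N=\{z\mid\exists m\in M\ (m\wedge x\to z\in P)\}$, engineered so that $(P,N)\in\mathcal{R}$ is maintained, and in which join internalization enters through $[m\wedge n\wedge(x\vee y)\to b]=[(m\wedge n\wedge x\to b)]\wedge[(m\wedge n\wedge y\to b)]$. This step is the technical heart of the theorem and is missing from your plan; the same issue would resurface if one wanted seriality for $(wF)$ in the $\nabla$-free setting, as in the paper's part $(*)$.
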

\begin{proof}
We split the proof to four cases depending on the presence of joins and $\nabla$. For all cases, we need the following constructions. Recall that $F(\mathcal{A})$ is the poset of all filters of $\mathcal{A}$ and define $\mathcal{R}$ as a binary relation over $F(\mathcal{A})$ as: $(P, Q) \in \mathcal{R}$ iff for all $a, b \in A$ if $a \to b \in P$ and $a \in Q$ then $b \in Q$.\\

\textit{Case I}. In this case both joins and $\nabla$ are not necessarily present. Define $W=F(\mathcal{A})$ and its order as the equality on $W$. Then it is clear that $\mathcal{K}_1=(W, =, \mathcal{R})$ is a Kripke frame in the sense of Example \ref{KripkeToSpacetime}. Consider $i: \mathcal{A} \to U(W, =_W)$ defined by $i(a)=\{P \in F(\mathcal{A}) | \; a \in P\}$. As we observed in the Preliminaries, $i$ is clearly a meet-semilattice embedding. Note that for any $X$ and $Y$ as the upsets of $(W,=_W)$, the implication in $\mathcal{S}_{\mathcal{K}_1}$ is defined by:
\[
X \to Y=\{P \in F(\mathcal{A}) | \forall Q \in F(\mathcal{A}) \; \text{if} \; (P, Q) \in \mathcal{R} \; \text{and} \; Q \in X \; \text{then} \; Q \in Y\}
\]
To prove that $i$ preserves the implication, i.e., $i(a \to b)= i(a) \to i(b)$, we have to check the following two directions:\\

To prove $i(a \to b) \subseteq i(a) \to i(b)$, if $P \in i(a \to b)$ then $a \to b \in P$. Then assume $(P, Q) \in \mathcal{R}$ and $Q \in i(a)$. Hence, $a \in Q$ and since $a \to b \in P$, by the definition of $\mathcal{R}$, we have $b \in Q$, meaning $Q \in i(b)$. Therefore, $P \in i(a) \to i(b)$. Conversely, if $P \in i(a) \to i(b)$, then consider $Q=\{x \in A | a \to x \in P\}$. Since $\mathcal{A}$ internalizes its meet structure, by Remark \ref{MonoidalInternalForMeet}, we have
\[
(a \to x) \wedge (a \to y)= (a \to x \wedge y)
\]
which means that $Q$ is a filter. Moreover, $a \in Q$ because $a \to a=1 \in P$. Note that $(P, Q) \in \mathcal{R}$ because if $x \to y \in P$ and $x \in Q$, then $a \to x \in P$ and since 
\[
(a \to x) \wedge (x \to y) \leq (a \to y)
\]
and $P$ is a filter, we have $a \to y \in P$ which means $y \in Q$. Therefore, $(P, Q) \in \mathcal{R}$. Now, since $a \in Q$ we have $Q \in i(a)$. Since $P \in i(a) \to i(b)$ and $(P, Q) \in \mathcal{R}$ we have $Q \in i(b)$, meaning $b \in Q$ which by the definition of $Q$ means $a \to b \in P$.\\

\textit{Case II.} In this case, again joins are not necessarily present. However, the algebra $\mathcal{A}$ is a reduct of a temporal algebra. Therefore, there exists $\nabla : A \to A$ such that for any $a \in A$, $a \wedge \nabla (-) \dashv (a \to -)$. First note that the relation $\mathcal{R}$ on $F(\mathcal{A})$ is also definable by $\nabla$ as $(P, Q) \in \mathcal{R}$ iff $\nabla [P]=\{\nabla x | x \in P\} \subseteq Q$. The reason is the following: If $(P, Q) \in \mathcal{R}$ and $x \in P$, since $x \leq 1 \to \nabla x$ and $P$ is a filter, $1 \to \nabla x \in P$. Therefore, by $1 \in Q$ and $(P, Q) \in \mathcal{R}$ we have $\nabla x \in Q$. Hence, $\nabla [P] \subseteq Q$. Conversely, if $\nabla [P] \subseteq Q$, given $a \to b \in P$ and $a \in Q$ we have $\nabla (a \to b) \in \nabla [P] \subseteq Q$ and since $a \wedge \nabla (a \to b) \leq b$ we have $b \in Q$. Therefore, $(P, Q) \in \mathcal{R}$.\\ 

Defining $\mathcal{R}$ in terms of $\nabla$ has the advantage to make $\mathcal{R}$ monotone also in its second argument, i.e, if $(P, Q) \in \mathcal{R}$ and $Q \subseteq Q'$, then $(P, Q') \in \mathcal{R}$. For this part, pick $W=F(\mathcal{A})$ as before and change the order on $W$ to $\subseteq$. Since $\mathcal{R}$ is compatible with $\subseteq$, the tuple $\mathcal{K}_2=(W, \subseteq, \mathcal{R})$ is a Kripke frame. Moreover, note that $i(a)$ for any $a \in A$ is an upset with respect to $\subseteq$. For the preservation of the implication, since it does not depend on the order on $W$, the argument for the previous case also works here. Therefore, we only have to show that $i$ preserves $\nabla$, i.e., $i(\nabla a)=\nabla i(a)$. If $P \in i(\nabla a)$, then $\nabla a \in P$. Pick $Q=\{x\in A | x \geq a\}$. This is clearly a filter and $(Q, P) \in \mathcal{R}$ because $\nabla [Q] \subseteq \nabla\{x \in A | x \geq a\} \subseteq P$ because $\nabla a \in P$. Therefore, there exists $Q$ that includes $a$ and $(Q, P) \in \mathcal{R}$. Therefore, $P \in \nabla i(a)$. Conversely, if $P \in \nabla i(a)$, then there exists $Q$ such that $a \in Q$ and $(Q, P) \in \mathcal{R}$. Therefore, $\nabla a \in \nabla [Q] \subseteq P$ and hence $\nabla a \in P$. Therefore, $P \in i(\nabla a)$\\

\textit{Case III.} Now, we move to the case where $\mathcal{A}$ is distributive and the implication internalizes the finite joins while $\nabla$ is not necessarily present. Here, we want to construct a Kripke frame and a join preserving map $i$. For that matter, as we observe in Preliminaries, it is sufficient to change $W$ from the set of filters of $\mathcal{A}$ to the set of all prime filters of $\mathcal{A}$, denoted by $P(\mathcal{A})$. The same $i$ works as an embedding and it preserves both finite meets and finite joins. Define $\mathcal{R}$ as before and $\mathcal{K}_3=(P(\mathcal{A}), =_W, \mathcal{R})$. The only thing to check is whether $i$ preserves both implication and $\nabla$, again.\\

For the implication, by the definition of $\mathcal{R}$ and as we had in Case I, $i(a \to b) \subseteq i(a) \to i(b)$ is clear. For the converse, assume $a \to b \notin P$ but $P \in i(a) \to i(b)$. Define $Q=\{x \in A | a \to x \in P \}$. The problem is that this $Q$ is not necessarily prime. The strategy is extending it to a suitable prime filter. Since $a \to b \notin P$ then $b \notin Q$. Define 
\[
\Sigma=\{S \in F(\mathcal{A}) \; | \; (P, S) \in \mathcal{R}, a \in S \; \text{and} \; b \notin S \}.
\]
The set $\Sigma$ is non-empty because $Q \in \Sigma$, as we have checked in Case I. Moreover, in $\Sigma$ any chain has an upper bound because if for all $i \in I$ we have $(P, S_i) \in \mathcal{R}$ then $(P, \bigcup_{i \in I} S_i)$. The reason is the following: If $x \to y \in P$ and $x \in \bigcup_{i \in I} S_i$ then for some $i \in I$ we have $x \in S_i$. Since $(P, S_i) \in \mathcal{R}$, we have $y \in S_i$ from which $y \in \bigcup_{i \in I} S_i$. Therefore, by Zorn's lemma, $\Sigma$ has a maximal element $M$. We will prove that $M$ is prime. First note that $0 \notin M$ because if so, $M=A$ which contradicts with $b \notin M$. Now for the sake of contradiction, let us assume that $x \vee y \in M$ and $x, y \notin M$. Then we claim that either for all $m \in M$ we have $(m \wedge x \to b) \notin P$ or for all $m \in M$ we have $(m \wedge y \to b) \notin P$. The reason is that if for some $m, n \in M$ both $(m \wedge x \to b) \in P$ and $(n \wedge y \to b) \in P$ happen, we would have $(m \wedge n \wedge x \to b) \in P$ and $(m \wedge n \wedge y \to b) \in P$. Then by distributivity and the fact that the implication internalizes the finite joins, we reach
\[
[m \wedge n \wedge (x \vee y) \to b ]=[(m \wedge n \wedge x \to b) \wedge (m \wedge n \wedge y \to b)] \in P
\]
and since $[m \wedge n \wedge (x \vee y)] \in M$ and $(P, M) \in \mathcal{R}$ we have $b \in M$ which is a contradiction. Hence, w.l.o.g. we can assume that for all $m \in M$ we have $(m \wedge x \to b) \notin P$. Then define 
\[
N=\{z \in A| \; \exists m \in M \; (m \wedge x \to z \in P)\}
\]
First note that $M \subseteq N$, because for any $m \in M$ we have $m \wedge x \to m=1 \in P$. Similarly, we have $x \in N$. Therefore, $N$ is a proper extension of $M$ because $x \notin M$. Secondly, note that $N$ is a filter because $1=[(1 \wedge 1) \to 1] \in P$ which implies $1 \in N$ and if $z, w \in N$ then there are $m, n \in M$ such that $(m \wedge x \to z) \in P$ and  $(n \wedge x \to w) \in P$. Therefore, $(m \wedge n \wedge x \to z) \in P$ and  $(m \wedge n \wedge x \to w) \in P$. Since $P$ is a filter and $\mathcal{A}$ internalizes its monoidal structure, by Remark \ref{MonoidalInternalForMeet}, we have
\[
(m \wedge n \wedge x) \to (z \wedge w) \in P
\] 
Since $M$ is a filter we have $m \wedge n \in M$ which implies $z \wedge w \in N$. Thirdly, note that we have $(P, N) \in \mathcal{R}$ because if $z \to w \in P$ and $z \in N$ there exists $m \in M$ such that $m \wedge x \to z \in P$ which implies $m \wedge x \to w \in P$ meaning that $w \in N$. And finally, note that $b \notin N$, because for all $m \in M$ we have $m \wedge x \to b \notin P$. Hence, $N \in \Sigma$ while it is a proper extension of $M$. This contradicts with the maximality of $M$ which implies that $M$ is prime. Finally, since $a \in M$ and $b \notin M$, we have $M \in i(a)$ and $M \notin i(b)$. Since $(P, M) \in \mathcal{R}$, this contradicts with $P \in i(a) \to i(b)$.\\

\textit{Case IV.} In this case, the algebra $\mathcal{A}$ is assumed to be a reduct of a distributive temporal algebra and we have to show that $i$ also preserves the $\nabla$ operator, i.e., $i(\nabla a)=\nabla i(a)$.  Define $\mathcal{R}$ as before and $\mathcal{K}_4=(P(\mathcal{A}), \subseteq, \mathcal{R})$. As we have seen in Case II, $\mathcal{R}$ is compatible with $\subseteq$ and hence $\mathcal{K}_4$ is a Kripke frame. Again, since the implication does not depend on the order, the proof of preservability of implication in the Case III works here, as well. The only thing to check is whether $i$ preserves $\nabla$. As we have observed in Case II, $\nabla i(a) \subseteq i(\nabla a)$ is an easy consequence of the definition of $\mathcal{R}$. To show $i(\nabla a) \subseteq \nabla i(a)$, if $Q \in i(\nabla a)$ then $\nabla a \in Q$. Define
\[
\Sigma=\{S \in F(\mathcal{A}) | \; (S, Q) \in \mathcal{R}, a \in S \; \text{and} \; 0 \notin S\}
\]
It is clear that $P=\{x \in A| x \geq a\} \in \Sigma$, because $a \in P$, since $\nabla a \in Q$, we have 
\[
\nabla [P]=\{\nabla x | x \geq a\} \subseteq Q
\]
and $0 \notin P$ because if $0 \in P$ then $0 \geq a$ which implies $a=0$ and hence $\nabla a=0 \in Q$ which is impossible since $Q$ is proper. Since $P \in \Sigma$, the set $\Sigma$ is non-empty. Any chain in $\Sigma$ has an upper bound because if for all $i \in I$ we have $(S_i, Q) \in \mathcal{R}$ then $\nabla [S_i] \subseteq Q$ from which $\nabla [\bigcup_{i \in I} S_i]=\bigcup_{i \in I} \nabla [S_i] \subseteq Q $ and hence $(\bigcup_{i \in I} S_i, Q) \in \mathcal{R}$. By Zorn's lemma, $\Sigma$ has a maximal element. Call it $M$. We will prove that $M$ is prime which completes the proof. First note that $M \in \Sigma$ which implies that $0 \notin M$. Hence, $M$ is proper. Now assume $x \vee y \in M$ and $x \notin M$, $y \notin M$. The filters $M_x$ and $M_y$ generated by $M \cup \{x\}$ and $M \cup \{y\}$ are proper extensions of $M$. Therefore, they are not in $\Sigma$ which means that either one of them includes zero or we have both $\nabla M_x \nsubseteq Q$ and $\nabla M_y \nsubseteq Q$. The first is impossible because if $0 \in M_x$, then there is $m \in M$ such that $m \wedge x \leq 0$. Since $\mathcal{A}$ is distributive, we have 
\[
m \wedge (x \vee y)=(m \wedge x) \vee (m \wedge y)=m \wedge y
\]
Since $x \vee y \in M$ and $M$ is a filter, we have $m \wedge (x \vee y) \in M$ which implies $m \wedge y \in M$. Therefore, since $m \wedge y \leq y$ we have $y \in M$ which is a contradiction. A similar argument also works for the case $0 \in M_y$. Hence, we are in the case that $\nabla M_x \nsubseteq Q$ and $\nabla M_y \nsubseteq Q$.\\
Therefore, there are $z, w \in A$ such that $\nabla z, \nabla w \notin Q$ and $z \in M_x$ and $w \in M_y$. Hence, there are $m, n \in M$ such that $m \wedge x \leq z$ and $n \wedge y \leq w$. Therefore, $\nabla (m \wedge x) \notin Q$ and $\nabla (n \wedge y) \notin Q$. Since $M$ is a filter, $m \wedge n \in M$ and since $x \vee y \in M$, we have 
\[
m \wedge n \wedge (x \vee y)=(m \wedge n \wedge x) \vee (m \wedge n \wedge y) \in M
\]
which by $(M, Q) \in \mathcal{R}$ implies $\nabla [(m \wedge n \wedge x) \vee (m \wedge n \wedge y)] \in Q$ and hence 
\[
\nabla (m \wedge n \wedge x) \vee \nabla (m \wedge n \wedge y) \in Q
\]
and since $Q$ is prime, either $\nabla (m \wedge n \wedge x) \in Q$ or $ \nabla (m \wedge n \wedge y) \in Q $. If $\nabla (m \wedge n \wedge x) \in Q$ then since $\nabla (m \wedge n \wedge x) \leq \nabla (m \wedge x)$ we have $\nabla (m \wedge x) \in Q$ which is a contradiction. A similar argument also works for the other case. Hence, $M$ is prime. Finally, since $a \in M$ and $(M, Q) \in \mathcal{R}$ we have $Q \in \nabla i(a)$ which completes the proof.\\

Finally, we have to address the preservability of the validity of the rule schemes. For $(N)$, if $\mathcal{A}$ satisfies the scheme $(N)$, $\nabla$ commutes with all finite meets. We want to find an order preserving function $\pi(P)$ such that $(P, Q) \in \mathcal{R}$ iff $P \subseteq \pi(Q)$. Define $\pi(P)=\nabla^{-1}[P]$. It is clearly order preserving. Note that $(P, Q) \in \mathcal{R}$ is equivalent to $\nabla [P] \subseteq Q$ which is equivalent to $P \subseteq \pi(Q)$. The only thing to show is that $\pi(P)$ is actually a filter if we are in Case II and it is a prime filter if we are in Case IV. First, $\nabla^{-1}[P]$ is clearly an upset. Since $1=\nabla 1$ and $P$ is a filter, we have $1 \in \nabla^{-1}[P]$. Moreover, if $x, y \in \nabla^{-1}[P]$ then $\nabla x, \nabla y \in P$. Since $P$ is a filter and $\nabla x \wedge \nabla y = \nabla (x \wedge y)$, we have $\nabla (x \wedge y) \in P$ and hence $x \wedge y \in \nabla^{-1}[P]$. Moreover, if $\mathcal{A}$ has all finite joins, then $\nabla^{-1}[P]$ is prime because if $x \vee y \in \nabla^{-1}[P]$, then $\nabla (x \vee y) \in P$. Since $\nabla$ has a right adjoint, it commutes with all joins and hence $\nabla x \vee \nabla y \in P$. Since $P$ is prime, either $\nabla x \in P$ or $\nabla y \in P$. Therefore, either $x \in \nabla^{-1}[P]$ or $y \in \nabla^{-1}[P]$. Moreover, $0 \notin \nabla^{-1}[P]$ because otherwise, $\nabla 0=0 \in P$ which is impossible, since $P$ is prime. \\
For $(H)$, note that $\nabla$ and $\Box$ as two operators over the Lindenbaum algebra are inverse of each other. Hence, $\nabla^{-1}$ as an operation over all filters or prime filters is an isomorphism. For $(P)$, given $(P, Q) \in \mathcal{R}$, we have $P \subseteq Q$. Because, given $a \in P$ and the fact that $(P, Q)$, we have $\nabla [P] \subseteq Q$ which implies $\nabla a \in \nabla [P] \subseteq Q$. Hence, $\nabla a \in Q$. Finally, we have $a \in  Q$ Since $\nabla a \leq a$. For $(F)$, we have to show $(P, P) \in \mathcal{R}$. The reason is that we have $\nabla [P] \subseteq P$, because for any $a \in P$, we have $a \leq \nabla a$ which implies $\nabla a \in P$.\\

$(*)$ [We denote this part by $(*)$ for the future reference.] Finally, for $(wF)$, first note that this rule scheme is also expressible by implication via $1 \to 0=0$. The reason is that if we have $(wF)$, then by adjunction $\nabla (1 \to 0) \leq 0$ from which $\nabla (1 \to 0) = 0$ and by $(wF)$ we have $1 \to 0=0$. Conversely, if $1 \to 0=0$ and $\nabla a=0$, then $a \leq 1 \to \nabla a= 1 \to 0=0$ from which $a=0$. Now let us prove that even in the Case III where $\nabla$ is not present, and we have a distributive join internalizing strong algebra $\mathcal{A}=(A, \leq, \wedge, 1, \to)$ if $1 \to 0=0$, then the defined $\mathcal{R}$ is serial. This generality will be useful later in the last section. For the proof, let $P$ be a prime filter. We have to find a prime filter $M$ such that $(P, M) \in \mathcal{R}$. Define $Q=\{x \in A | 1 \to x \in P\}$. Similar to what we had in the four cases above, $Q$ is a filter and $(P, Q) \in \mathcal{R}$. Note that $0 \notin Q$, because otherwise, $1 \to 0=0 \in P$ which is impossible. Define 
\[
\Sigma=\{S \in F(\mathcal{A}) \; | \; (P, S) \in \mathcal{R}\; \text{and} \; 0 \notin S \}.
\]
The set $\Sigma$ is non-empty because $Q \in \Sigma$. Moreover, in $\Sigma$ any chain has an upper bound. The proof is similar to the Case III. Hence, by Zorn's lemma, $\Sigma$ has a maximal element. Similar to the proof of the Case III, this $M$ is prime which completes the proof.
\end{proof}

\begin{thm}(Completeness)
For any rule scheme $\mathcal{R} \subseteq \{N, H, P, F, wF\}$, the logic $i\mathbf{STL}(\mathcal{R})$ is complete with respect to the class of all $\mathbf{K}(\mathcal{R})$-Kripke models.
\end{thm}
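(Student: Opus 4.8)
The plan is to mirror the topological completeness proof (Theorem \ref{t4-4}), but to feed the Lindenbaum algebra into the prime-filter Kripke representation of Theorem \ref{KripkeRepresentation} instead of into a completion. First I would form the Lindenbaum algebra $\mathcal{B}(L)$ of $L = i\mathbf{STL}(\mathcal{R})$ exactly as in the paragraph preceding Theorem \ref{RepFromTemp}: formulas modulo provable equivalence, ordered by $\vdash$. Since all structural rules are present, the monoidal structure collapses to the meet (Remark \ref{3}), so $\mathcal{B}(L)$ is a bounded distributive lattice; by Remark \ref{PropertiesOgfLogic} the canonical $\nabla$ and $\to$ make it a distributive temporal algebra, i.e.\ $[A] \wedge \nabla(-) \dashv ([A] \to (-))$ for every $A$. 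I would then check that $\mathcal{B}(L)$ satisfies all hypotheses of Theorem \ref{KripkeRepresentation}: it internalizes its meet structure (automatic for a meet-monoidal temporal algebra, via the counit $a \wedge \nabla(a \to b) \leq b$ and oplaxness of $\nabla$, using Remark \ref{MonoidalInternalForMeet}); and it internalizes joins, since distributivity gives $x \leq (a \vee b) \to c$ iff $(a \wedge \nabla x) \vee (b \wedge \nabla x) \leq c$ iff $x \leq (a \to c) \wedge (b \to c)$. Finally, each rule scheme in $\mathcal{R}$ carried by $L$ translates on $\mathcal{B}(L)$ into exactly the corresponding algebraic condition (for instance $(P)$ yields $\nabla a \leq a$, and $(wF)$ yields that $\nabla a = 0$ implies $a = 0$).

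Next I would invoke Theorem \ref{KripkeRepresentation}. Since $\mathcal{B}(L)$ is a distributive, join-internalizing strong algebra that is a reduct of a temporal algebra and satisfies $\mathcal{R}$, there is a $\mathbf{K}(\mathcal{R})$-Kripke frame $\mathcal{K} = (W, \leq, R)$ and a strong-algebra embedding $i : \mathcal{B}(L) \to \mathcal{S}_{\mathcal{K}}$ preserving finite meets, finite joins, $\nabla$ and $\to$, with the frame conditions associated to $\mathcal{R}$ holding in $\mathcal{K}$. I then turn $\mathcal{K}$ into a Kripke model by setting $V(p) = i([p]) \in U(W, \leq)$. A routine induction on formulas, using that $i$ commutes with every connective and constant, yields $i([C]) = \bar{V}(C)$, the topological interpretation of $C$ in $\mathcal{S}_{\mathcal{K}}$. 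Because $\mathcal{S}_{\mathcal{K}}$ is the spacetime attached to a Kripke frame, the identity $\bar{V}(C) = \{w \in W : w \Vdash C\}$ established in the soundness theorem lets me read $i([C])$ as the forcing set of $C$.

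With this dictionary, completeness follows by contraposition. If $\Gamma \not\vdash_{L} A$, then $\bigwedge_{\gamma \in \Gamma}[\gamma] \not\leq [A]$ in $\mathcal{B}(L)$; since $i$ is order-reflecting, $\bigcap_{\gamma \in \Gamma} i([\gamma]) \not\subseteq i([A])$, so some $w \in W$ forces every $\gamma \in \Gamma$ but fails to force $A$. This exhibits a $\mathbf{K}(\mathcal{R})$-Kripke model in which $\Gamma \Rightarrow A$ is invalid, which is exactly what completeness requires.

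The genuine difficulty is entirely quarantined inside Theorem \ref{KripkeRepresentation} (the Zorn/prime-filter extensions producing a prime filter compatible with $\mathcal{R}$, and in particular the $\nabla$- and join-preservation arguments of its Cases III and IV). In the present argument the only points needing care are (i) verifying the hypotheses of that representation theorem for $\mathcal{B}(L)$, above all the join-internalization identity and the algebraic reading of $(wF)$ through $1 \to 0 = 0$ (the part marked $(*)$ in the proof of Theorem \ref{KripkeRepresentation}), and (ii) the bookkeeping that the topological interpretation in $\mathcal{S}_{\mathcal{K}}$ coincides with Kripke forcing, which is already the content of the soundness direction. I do not expect either step to present a serious obstacle.
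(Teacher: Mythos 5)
Your proposal is correct and follows essentially the same route as the paper: form the Lindenbaum algebra of $i\mathbf{STL}(\mathcal{R})$, observe that it is a distributive join-internalizing temporal algebra satisfying the algebraic counterparts of the schemes in $\mathcal{R}$, feed it into Theorem \ref{KripkeRepresentation}, set $V(p)=i([p])$, identify $i([C])$ with the forcing set of $C$, and conclude from the fact that $i$ is an embedding. The extra detail you supply on verifying the hypotheses of the representation theorem (meet- and join-internalization, the $1\to 0=0$ reading of $(wF)$) is exactly what the paper leaves implicit with the word ``clearly.''
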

\begin{proof}
Since the Lindenbaum algebra of the logic $i\mathbf{STL}(\mathcal{R})$ is clearly a distributive join internalizing temporal algebra, then if we apply Theorem \ref{KripkeRepresentation} on it, it produces a Kripke frame $\mathcal{K}=(W, \leq, R)$ and an embedding $i$. Then define $V: At(\mathcal{L}_{\nabla}) \to U(W, \leq)$ by $V(p)=i([p])$ where $[p]$ is the equivalence class of $p$ in the Lindenbaum algebra. It is routine to check that $\{w \in W| \; w \Vdash B\}=i([B])$ for any formula $B \in \mathcal{L}_{\nabla}$. Therefore, if $\Gamma \Rightarrow A$ is valid in all $\mathbf{K}(\mathcal{R})$-Kripke models including $(W, \leq, R, V)$, we will have $i([\Gamma]) \subseteq i([A])$. Since $i$ is an embedding, it implies $[\Gamma] \leq [A]$ which simply means that $i\mathbf{STL}(\mathcal{R}) \vdash \Gamma \Rightarrow A$.
\end{proof}

\begin{lem}\label{1}
In Corollary \ref{TransferringNabla}, if $\mathcal{S}$ satisfies any rule scheme in $\{F, wF\}$, then so does $\mathcal{T}$.
\end{lem}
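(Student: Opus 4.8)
The plan is to work directly from the explicit formula $\nabla = f \nabla_{\mathcal{S}} f_!$ used in the proof of Corollary \ref{TransferringNabla}, together with the two adjunction facts established there: since $f$ is an embedding and $f_! \dashv f$, we have $f_! f = id$ on $\mathscr{X}$, while Remark \ref{SemiInverse} supplies the counit inequality $y \leq f f_!(y)$ for every $y \in \mathscr{Y}$. I will also use that $f$, being a geometric morphism, preserves all joins, so in particular $f(0) = 0$, and that $f$ is order-reflecting.

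For the scheme $(F)$, suppose $\mathcal{S}$ satisfies it, i.e. $x \leq \nabla_{\mathcal{S}} x$ for all $x \in \mathscr{X}$. Instantiating at $x = f_!(y)$ and applying the monotone map $f$ yields $f f_!(y) \leq f \nabla_{\mathcal{S}} f_!(y) = \nabla y$. Composing with the counit $y \leq f f_!(y)$ gives $y \leq \nabla y$, which is exactly $(F)$ for $\mathcal{T}$. The only point to watch is that the counit points in the direction matching the inequality of $(F)$.

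For $(wF)$, suppose $\mathcal{S}$ satisfies it and assume $\nabla y = f \nabla_{\mathcal{S}} f_!(y) = 0$. Since $f$ preserves joins we have $f(0) = 0$, so $f(\nabla_{\mathcal{S}} f_!(y)) = f(0)$; as $f$ is order-reflecting this forces $\nabla_{\mathcal{S}} f_!(y) = 0$. Then $(wF)$ for $\mathcal{S}$ gives $f_!(y) = 0$, and applying $f$ together with the counit yields $y \leq f f_!(y) = f(0) = 0$, hence $y = 0$. This establishes $(wF)$ for $\mathcal{T}$.

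I expect the only genuine subtlety to be bookkeeping of the directions of the inequalities and the roles of unit versus counit. The same strategy visibly fails for $(P)$ (and for $(N)$, $(H)$), since there the argument would require $f f_!(y) \leq y$ rather than the available $y \leq f f_!(y)$; this is precisely why the lemma is restricted to $\{F, wF\}$. Note also that no completion or distributivity hypothesis is needed, because the conditions are transferred directly along $f$ rather than through an ideal construction as in Theorem \ref{RepFromTemp}.
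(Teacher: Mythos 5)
Your proof is correct and follows essentially the same route as the paper: both arguments transfer $(F)$ via $y \leq ff_!(y) \leq f\nabla_{\mathcal{S}}f_!(y) = \nabla y$ and transfer $(wF)$ by using that $f$ is an embedding to pull $\nabla y = 0$ back to $\nabla_{\mathcal{S}}f_!(y)=0$ and then pushing $f_!(y)=0$ forward with $f(0)=0$. The only slip is terminological: the inequality $y \leq ff_!(y)$ from Remark \ref{SemiInverse} is the unit of the adjunction $f_! \dashv f$, not the counit, but the direction you use is the correct one.
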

\begin{proof}
Note that we defined $\nabla=f\nabla_{\mathcal{S}}f_!$. If $\mathcal{S} \in \mathbf{ST}(wF)$, then $\mathcal{T} \in \mathbf{ST}(wF)$ because for any $a \in \mathscr{Y}$, if $\nabla a=0$, then $f \nabla_{\mathcal{S}} f_!a=0$. Since $f$ is an embedding, $\nabla_{\mathcal{S}} f_!a=0$. Since $\mathcal{S} \in \mathbf{ST}(wF)$, we have $f_! a=0$. Then $f_! a \leq 0$ implies $a \leq f(0)$. But $f(0)=0$ because $f$ is join preserving. Hence, $a=0$. For $(F)$, if $\mathcal{S} \in \mathbf{ST}(F)$, then we have $\nabla_{\mathcal{S}} f_! a \geq f_! a$ from which $\nabla a=f \nabla_{\mathcal{S}} f_! a \geq ff_!a \geq a$. The last inequality is from the adjunction $f_! \dashv f$. Hence, $\mathcal{T} \in \mathbf{ST}(F)$.
\end{proof}

\begin{thm}\label{TruthTransformation}
Let $X$ be a topological space, $Y$ be an Alexandroff space and $f: X \to Y$ be a continuous surjection. Then for any $\nabla_Y$ over $\mathcal{O}(Y)$ and any valuation $V: At(\mathcal{L}_{\nabla}) \to \mathcal{O}(Y)$, there exist $\nabla_X$ over $\mathcal{O}(X)$ and a valuation $U: At(\mathcal{L}_{\nabla}) \to \mathcal{O}(X)$ such that for any sequent $\Gamma \Rightarrow A$, we have $(\mathcal{O}(X), \nabla_X, U) \vDash \Gamma \Rightarrow A$ iff $(\mathcal{O}(Y), \nabla_Y, V) \vDash \Gamma \Rightarrow A$. Moreover, for any class $\mathcal{C} \in \{ i\mathbf{ST}(F), i\mathbf{ST}(wF)\}$, if $(\mathcal{O}(Y), \nabla_Y) \in \mathcal{C} $ then $(\mathcal{O}(X), \nabla_X) \in \mathcal{C} $. Hence, if $X \vDash_\mathcal{C} \Gamma \Rightarrow A$ then  $Y \vDash_\mathcal{C} \Gamma \Rightarrow A$. 
\end{thm}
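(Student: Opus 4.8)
The plan is to bottle everything into the machinery already built in Corollaries \ref{TransferringNabla} and \ref{TransferringNablaAlex} together with Lemma \ref{1}, so that almost no new computation is required. First I would invoke Corollary \ref{TransferringNablaAlex}: since $Y$ is Alexandroff and $f$ is a continuous surjection, $f^{-1}:\mathcal{O}(Y) \to \mathcal{O}(X)$ preserves arbitrary intersections and is an order-reflecting embedding, hence has a left adjoint $f_!$, and the distinguished operator $\nabla_X = f^{-1}\nabla_Y f_!$ makes $\mathcal{T}=(\mathcal{O}(X),\nabla_X)$ a spacetime with $f^{-1}:\mathcal{S} \to \mathcal{T}$ a \emph{logical} morphism. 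This already exhibits the $\nabla_X$ the theorem asks for.

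Next I would pull the valuation back along $f$, setting $U(p)=f^{-1}(V(p))$ for every atomic $p$. The core lemma is then that $U(A)=f^{-1}(V(A))$ for all $A \in \mathcal{L}_{\nabla}$, proved by induction on the structure of $A$. The constant cases are immediate, since $f^{-1}$ sends $Y,\emptyset,Y$ to $X,\emptyset,X$ and thus preserves $V(1),V(\bot),V(\top)$; and each inductive step is handled by the fact that $f^{-1}$, being a logical morphism, commutes with every connective: over locales it preserves $\cap$ (covering both $\wedge$ and $\otimes$), $\cup$ (covering $\vee$), the modality $\nabla$, and, crucially, the implication, i.e.\ $f^{-1}(a \to_{\mathcal{S}} b)=f^{-1}(a)\to_{\mathcal{T}} f^{-1}(b)$. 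This last clause is exactly what Corollary \ref{TransferringNablaAlex} supplies, and it is the only genuinely nontrivial ingredient of the argument.

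With the core lemma available, the equivalence of validity is a one-liner resting on $f^{-1}$ being an embedding. Unfolding the definitions, $(\mathcal{O}(Y),\nabla_Y,V)\vDash \Gamma \Rightarrow A$ means $\bigcap_{\gamma \in \Gamma} V(\gamma) \subseteq V(A)$, whereas $(\mathcal{O}(X),\nabla_X,U)\vDash \Gamma \Rightarrow A$ means, by the core lemma and finite-meet preservation, $f^{-1}(\bigcap_{\gamma \in \Gamma} V(\gamma)) \subseteq f^{-1}(V(A))$; since $f^{-1}$ is order-reflecting — this is where surjectivity of $f$ is used — the two inclusions are equivalent, giving the stated biconditional for every sequent.

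Finally, for the class-preservation clause I would appeal directly to Lemma \ref{1}: because $\mathcal{T}$ arises through the construction of Corollary \ref{TransferringNabla}, whenever $\mathcal{S}=(\mathcal{O}(Y),\nabla_Y)$ lies in $i\mathbf{ST}(F)$ or $i\mathbf{ST}(wF)$ the same holds for $\mathcal{T}=(\mathcal{O}(X),\nabla_X)$. The concluding implication then follows by transporting along the biconditional: given any $\nabla_Y$ and $V$ with $(\mathcal{O}(Y),\nabla_Y)\in\mathcal{C}$, the produced $(\mathcal{O}(X),\nabla_X)$ is again in $\mathcal{C}$, so $X \vDash_{\mathcal{C}} \Gamma \Rightarrow A$ forces $(\mathcal{O}(X),\nabla_X,U)\vDash \Gamma \Rightarrow A$, whence $(\mathcal{O}(Y),\nabla_Y,V)\vDash \Gamma \Rightarrow A$. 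I expect the only real subtlety to be the bookkeeping of the induction — in particular making sure the logical-morphism property is genuinely invoked in the implication case — since everything else reduces to adjunction and order-reflection facts already established.
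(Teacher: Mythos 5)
Your proposal is correct and follows essentially the same route as the paper's proof: invoke Corollary \ref{TransferringNablaAlex} to obtain $\nabla_X$ with $f^{-1}$ logical, pull the valuation back via $U(p)=f^{-1}(V(p))$, propagate $U(B)=f^{-1}(V(B))$ through the connectives, use surjectivity of $f$ to make $f^{-1}$ an embedding for the validity biconditional, and finish the class-preservation clause with Lemma \ref{1}. No gaps.
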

\begin{proof}
Let $\nabla_Y: \mathcal{O}(Y) \to \mathcal{O}(Y)$ be a join preserving map and $V: At(\mathcal{L}_{\nabla}) \to \mathcal{O}(Y)$. By Corollary \ref{TransferringNablaAlex}, since $f$ is a continuous surjection and $Y$ is Alexandroff, there exists a join preserving map $\nabla_X: \mathcal{O}(X) \to \mathcal{O}(X)$ such that $f^{-1}: (\mathcal{O}(Y), \nabla_Y) \to (\mathcal{O}(X), \nabla_X)$ becomes a logical morphism. Therefore, $f^{-1}$ commutes with all connectives of the language $\mathcal{L}_{\nabla}$. Define $U(p)=f^{-1}(V(p))$. For any formula $B \in \mathcal{L}_{\nabla}$, it is evident that $U(B)=f^{-1}(V(B))$.
Now note that $(\mathcal{O}(X), \nabla_X, U) \vDash \Gamma \Rightarrow A$ iff $U(\Gamma) \subseteq U(A)$ iff $f^{-1}(V(\Gamma)) \subseteq f^{-1}(V(A))$. Since $f$ is surjective, $f^{-1}$ is an embedding. Thus, the last is equivalent to $V(\Gamma) \subseteq V(A)$ iff $(\mathcal{O}(Y), \nabla_Y, V) \vDash \Gamma \Rightarrow A$. Finally, note that if for any class $\mathcal{C}$ from the classes $i\mathbf{ST}(F)$ and $i\mathbf{ST}(wF)$, if $(\mathcal{O}(Y), \nabla_Y) \in \mathcal{C} $ then $(\mathcal{O}(X), \nabla_X) \in \mathcal{C} $, from Lemma \ref{1}.
\end{proof}
The following theorem uses the Kripke completeness to show that for the topological completeness theorem and for logics $i\mathbf{ST}$, $i\mathbf{ST}(F)$ and $i\mathbf{ST}(wF)$, even one fixed and large enough discrete space is sufficient. This means that despite the intuitionistic logic, $\mathbf{IPC}$, these logics can not understand the difference between discrete sets (complete for classical logic) and topological spaces (complete for intuitionistic logic).
\begin{thm} \label{StrongTopForNabla}(Topological Completeness Theorem, Strong version)  Let $X$ be a set with cardinality at least $2^{\aleph_0}$. Consider $X$ as a discrete space. Then:
\begin{itemize}
\item[$(i)$]
If $X \vDash_{i\mathbf{ST}} A$ then $i\mathbf{STL} \vdash A$.
\item[$(ii)$]
If $X \vDash_{i\mathbf{ST}(F)} A$ then $i\mathbf{STL}(F) \vdash A$.
\item[$(iii)$]
If $X \vDash_{i\mathbf{ST}(wF)} A$ then $i\mathbf{STL}(wF) \vdash A$.
\end{itemize}
\end{thm}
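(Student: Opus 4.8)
The plan is to prove all three statements simultaneously by contraposition, reducing topological completeness over the single discrete space $X$ to the Kripke completeness theorem established just above, together with the transfer machinery of Theorem \ref{TruthTransformation}. Fix $\mathcal{R} \in \{\emptyset, \{F\}, \{wF\}\}$ with its corresponding class $\mathcal{C} \in \{i\mathbf{ST}, i\mathbf{ST}(F), i\mathbf{ST}(wF)\}$, and suppose $i\mathbf{STL}(\mathcal{R}) \not\vdash A$. I must exhibit a single $\nabla_X$ on $\mathcal{O}(X)$ with $(\mathcal{O}(X), \nabla_X) \in \mathcal{C}$ and a valuation refuting $A$; since $X$ is fixed independently of $A$, the real content is a uniform cardinality bound on the refuting models.

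First I would invoke Kripke completeness: from $i\mathbf{STL}(\mathcal{R}) \not\vdash A$ we obtain a $\mathbf{K}(\mathcal{R})$-Kripke model $\mathcal{K} = (W, \leq, R, V)$ with some world $w_0 \not\Vdash A$. Inspecting that completeness proof, $\mathcal{K}$ is the canonical model produced by Theorem \ref{KripkeRepresentation} from the Lindenbaum algebra $\mathcal{B}(L)$ of $L = i\mathbf{STL}(\mathcal{R})$ (a distributive, join internalizing temporal algebra, so Case IV applies), whence $W = P(\mathcal{B}(L))$ is its set of prime filters. Next I would repackage $\mathcal{K}$ topologically: let $Y$ be the underlying set $W$ with its Alexandroff topology, so that $\mathcal{O}(Y) = U(W, \leq)$ and the associated spacetime is exactly $\mathcal{S}_{\mathcal{K}} = (\mathcal{O}(Y), \nabla_{\mathcal{K}})$ of Example \ref{KripkeToSpacetime}. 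By the Kripke-to-topological comparison used in the Kripke soundness proof, the induced valuation $\bar V$ satisfies $\bar V(A) = \{w : w \Vdash A\} \neq W$, so $e = 1_{\mathcal{O}(Y)} \not\leq \bar V(A)$, i.e. $(\mathcal{O}(Y), \nabla_{\mathcal{K}}, \bar V) \not\vDash A$. Moreover $Y$ is Alexandroff, and when $\mathcal{R} = \{F\}$ (resp. $\{wF\}$) the reflexivity (resp. seriality) of $R$ gives $(\mathcal{O}(Y), \nabla_{\mathcal{K}}) \in i\mathbf{ST}(F)$ (resp. $i\mathbf{ST}(wF)$), exactly as recorded in the Kripke soundness proof.

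The key quantitative step comes next. Since the language carries only countably many formulas, $\mathcal{B}(L)$ is countable, so each prime filter is a subset of a countable set and $|W| = |P(\mathcal{B}(L))| \leq 2^{\aleph_0} \leq |X|$; as $A$ is unprovable, $W \neq \emptyset$, so there is a surjection $f: X \to Y$, and because $X$ is discrete this $f$ is automatically continuous. I would then apply Theorem \ref{TruthTransformation} to $f$, with $\nabla_Y = \nabla_{\mathcal{K}}$ and $V = \bar V$: it delivers $\nabla_X$ on $\mathcal{O}(X)$ and a valuation $U$ with $(\mathcal{O}(X), \nabla_X, U) \vDash \Delta$ iff $(\mathcal{O}(Y), \nabla_{\mathcal{K}}, \bar V) \vDash \Delta$ for every sequent $\Delta$; in particular $(\mathcal{O}(X), \nabla_X, U) \not\vDash A$. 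For part $(i)$ the class condition is vacuous, as $(\mathcal{O}(X), \nabla_X)$ is automatically a spacetime; for $(ii)$ and $(iii)$ the ``moreover'' clause of Theorem \ref{TruthTransformation} (resting on Lemma \ref{1}) transports membership in $i\mathbf{ST}(F)$ and $i\mathbf{ST}(wF)$ from $Y$ to $X$. Hence $X \not\vDash_{\mathcal{C}} A$, the desired contrapositive.

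The main obstacle is not any single calculation but ensuring the construction respects exactly these three classes and no more: the transfer $\nabla_X = f^{-1} \nabla_{\mathcal{K}} f_!$ preserves only $(F)$ and $(wF)$ (Lemma \ref{1}) and would break for $(N)$, $(H)$, $(P)$, which is precisely why the statement is restricted to $i\mathbf{ST}$, $i\mathbf{ST}(F)$, $i\mathbf{ST}(wF)$. The bound $2^{\aleph_0}$ must be tied to the countability of the signature so that one fixed $X$ serves all non-theorems uniformly; an uncountable expansion of the language would force a correspondingly larger $X$.
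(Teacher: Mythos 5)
Your proposal is correct and follows essentially the same route as the paper: both arguments take the canonical Kripke countermodel from the Kripke completeness theorem, view its underlying poset as an Alexandroff space of cardinality at most $2^{\aleph_0}$ (by countability of the Lindenbaum algebra), obtain a continuous surjection from the discrete space $X$, and transfer the countermodel back via Theorem \ref{TruthTransformation} together with Lemma \ref{1} for the $(F)$ and $(wF)$ cases. The only difference is presentational — you phrase it as a contrapositive and spell out the intermediate steps that the paper's terse proof leaves implicit.
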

\begin{proof}
For $(i)$, let $\mathcal{K}=(W, \leq, R, V)$ be the Kripke model in the proof of Kripke completeness theorem. Note that $U(W, \leq)$ is Alexandroff. The cardinality of this space is at most $2^{\aleph_0}$, since the Lindenbaum algebra is countable. Hence, there exists a surjective function $f: X \to Y$. Since $X$ is discrete, $f$ is also continuous. Therefore, the claim follows from the last part of Theorem \ref{TruthTransformation}. The proofs for the other parts are similar.
\end{proof}

\begin{rem}
Note that the Theorem \ref{StrongTopForNabla} is not true without the size condition. Interestingly, it is not true for a singleton set $X=\{0\}$. The reason is that in this space we always have $p \vee \neg p$. There are only two possibilities for $\nabla: \{0, 1\} \to \{0, 1\}$. Since $\nabla 0=0$, we have either $\nabla 1=0$ or $\nabla 1=1$. In the second case, $\nabla$ collapses to identity and hence $p \vee \neg p$ is valid because validity is just the boolean validity. In the first case, since $\nabla 1=0$, we have $(\nabla 1 \cap V(p))=0 \leq 0$ which implies $1 \leq (V(p) \to 0)$. Hence, $(V(p) \to 0)=1$ from which $[(V(p) \to 0) \cup V(p)]=1$. However, $p \vee \neg p$ is not provable in neither of the logics $i\mathbf{ST}$, $i\mathbf{ST}(F)$ and $i\mathbf{ST}(wF)$, because all of them are sub-logics of $\mathbf{IPC}$.
\end{rem}
\section{Sub-intuitionistic Logics} \label{Sub-int}
Sub-intuitionistic logics are the propositional logics of the weak implications. They are usually defined by weakening certain axioms and rules for the intuitionistic implication including the modus ponens rule and the implication introduction rule in the natural deduction system. As we have mentioned before, the logics of spacetime are also designed for the same purpose. In this section we will show how the structural logics of spacetime provide a well-behaved conservative extension for sub-intuitionistic logics. Moreover, we will also use spacetimes to provide a topological semantics for these logics.\\

First let us review some important sub-intuitionistic logics, introduced in \cite{Vi}, \cite{Vi2}, \cite{Restall}, \cite{CJ1}, \cite{Ard}, \cite{Ru2}, \cite{Okada}, \cite{Corsi}, and \cite{Dosen} and investigated extensively in \cite{BasicPropLogic}, \cite{Ard2}, \cite{Aliz1}, \cite{Aliz2}, \cite{Aliz3},  \cite{Aliz4},  \cite{Aliz5}, \cite{CJ2}, \cite{Sas}, and \cite{Suz}. To complete the list we also define one new logic, $\mathbf{EKPC}$ and we will explain its behaviour later. Consider the following rules of the usual natural deduction system on sequents in the form $\Gamma \vdash A$, where $\Gamma \cup \{A\}$ is a finite set of formulas in the usual propositional language, i.e., $\{\top, \bot, \wedge, \vee, \to\}$:
\begin{flushleft}
  		\textbf{Propositional Rules:}
\end{flushleft}
\begin{center}
  	\begin{tabular}{c c}
  	
  	    \AxiomC{$ $}
		\RightLabel{$\top$}
		\UnaryInfC{$\Gamma \vdash \top$}
		\DisplayProof
  	    &\;\;
		\AxiomC{$ \Gamma \vdash \bot$}
		\RightLabel{$\bot$}
		\UnaryInfC{$\Gamma \vdash A$}
		\DisplayProof
  	    \\[3 ex]
  	
  		\AxiomC{$\Gamma \vdash A\vee B $}
  		\AxiomC{$\Gamma, A  \vdash C $}
  		\AxiomC{$\Gamma, B  \vdash C$}
  		\RightLabel{$\vee E$} 
  		\TrinaryInfC{$ \Gamma \vdash C$}
  		\DisplayProof
	  		&
	   	\AxiomC{$ \Gamma \vdash A_i$}
   		\LeftLabel{$ (i=0, 1) $}
   		\RightLabel{$\vee I$} 
   		\UnaryInfC{$ \Gamma \vdash A_0 \lor A_1$}
   		\DisplayProof
	   		\\[3 ex]
   		\AxiomC{$ \Gamma \vdash A_0 \wedge A_1  $}
   		\LeftLabel{$(i=0, 1) $} 
   		\RightLabel{$\wedge E$}  		
   		\UnaryInfC{$ \Gamma \vdash A_i $}
   		\DisplayProof
	   		&
   		\AxiomC{$\Gamma  \vdash A$}
   		\AxiomC{$\Gamma  \vdash B$}
   		\RightLabel{ $\wedge I$} 
   		\BinaryInfC{$ \Gamma \vdash A \wedge B $}
   		\DisplayProof
	\end{tabular}
\end{center}

\begin{center}
\begin{tabular}{c}
   		   		
    		\AxiomC{$A \vdash B$}
    		\RightLabel{$\rightarrow I$}
    		\UnaryInfC{$\Gamma \vdash A \rightarrow B$}
    		\DisplayProof
\end{tabular}
\end{center}

\begin{flushleft}
   \textbf{Formalized Rules:}
   \end{flushleft}
\begin{center}
\begin{tabular}{c c}
   		   		
   		\AxiomC{$\Gamma \vdash A \rightarrow B$}
    		\AxiomC{$\Gamma \vdash A \rightarrow C$}
    		\RightLabel{$(\wedge I)_f$}
    		\BinaryInfC{$\Gamma \vdash A \rightarrow B \wedge C$}
    		\DisplayProof
    		&
    		\AxiomC{$\Gamma \vdash A \rightarrow C$}
    		\AxiomC{$\Gamma \vdash B \rightarrow C$}
    		\RightLabel{$(\vee E)_f$}
    		\BinaryInfC{$\Gamma \vdash A \vee B \rightarrow C$}
    		\DisplayProof
    		\\[4 ex]
    		
\end{tabular}
\end{center}
\begin{center}
      \begin{tabular}{c}
  
    		\AxiomC{$\Gamma \vdash A \rightarrow B$}
    		\AxiomC{$\Gamma \vdash B \rightarrow C$}
    		\RightLabel{$tr_f$}
    		\BinaryInfC{$\Gamma \vdash A \rightarrow C$}
    		\DisplayProof
    		\\[4 ex]
	\end{tabular}
\end{center}

\begin{flushleft}
   \textbf{Additional Rules:}
   \end{flushleft}
\begin{center}
  	\begin{tabular}{c c c}
		\AxiomC{$\Gamma \vdash \top \rightarrow \bot$}
		\RightLabel{$E$}
		\UnaryInfC{$\Gamma \vdash \bot$}
		\DisplayProof
		&
        \AxiomC{$\Gamma \vdash A $}
        \AxiomC{$\Gamma \vdash A \rightarrow B$}
        \RightLabel{$MP$}
        \BinaryInfC{$\Gamma \vdash B $}
        \DisplayProof
        &
        \AxiomC{$\Gamma \vdash A$}
		\RightLabel{$Cur$}
		\UnaryInfC{$\Gamma \vdash \top \to A$}
		\DisplayProof
		\\[3 ex]
	\end{tabular}
\end{center}
The logic $\mathbf{KPC}$ is defined as the logic of the system of all the propositional and formalized rules. $\mathbf{BPC}$ is defined as $\mathbf{KPC} + Cur$; $\mathbf{EKPC}$ as $\mathbf{KPC}$ plus the rule $E$; $\mathbf{EBPC}$ as $\mathbf{BPC}$ plus the rule $E$; $\mathbf{KTPC}$ as $\mathbf{KPC}$ plus the rule $MP$ and finally $\mathbf{IPC}$ is defined as $\mathbf{BPC}$ plus the rule $MP$.
\begin{rem}\label{t4-7}
First note that in the algebraic terminology, the rules state that the connective $\to$ is an implication that internalizes both the monoidal structure, i.e., the meet and the finite joins. Secondly, note that in defining the consequence relation $\vdash$ for sub-intuitionistic logics, we mostly follow \cite{CJ1}, where $\mathbf{KPC}$ and $\mathbf{KTPC}$ are called $wK_{\sigma}$ and $wK_{\sigma}(MP)$. Here, we follow the modal naming tradition to call them $\mathbf{KPC}$ and $\mathbf{KTPC}$ since, they are sound and complete with respect to the class of all and reflexive Kripke models, respectively. The final point to make is on the axiomatization of $\mathbf{BPC}$. This logic can be also defined as $\mathbf{KPC}$ plus the relaxed version of $\to I$ as defined in \cite{Ard2}:
\begin{center}
      \begin{tabular}{c}
  
    		\AxiomC{$\Gamma, A \vdash B$}
    		\RightLabel{$ $}
    		\UnaryInfC{$\Gamma \vdash A \rightarrow B$}
    		\DisplayProof
	\end{tabular}
\end{center}
To prove the equivalence, it is clear that the rule $Cur$ is provable by this more strong version of $\rightarrow I$. Moreover, it is easy to show that the new system with this rule admits the weakening rule. Hence, the original $\to I$ is provable. For the converse, first we will show that using the rule $Cur$, $C \vdash D \rightarrow C$ is provable, for all the formulas $C$ and $D$. First use $Cur$ on $C$ to prove $C \vdash \top \rightarrow C$ and since $D \vdash \top$, we have $C \vdash D \to \top$. By formalized $tr$, we have $C \vdash D \rightarrow C$. Coming back to the proof of the converse part, assume $\Gamma, A \vdash B$. It is easy to see that $\bigwedge \Gamma \wedge A \vdash B$ and then $\vdash \bigwedge \Gamma \wedge A \to B$, by the original version of $\rightarrow I$. By the foregoing point and the formalized $\wedge I$, we can prove $\bigwedge \Gamma \vdash A \to \bigwedge \Gamma \wedge A$, which implies $\bigwedge \Gamma \vdash A \to B$, by $tr_f$. Therefore, $\Gamma \vdash A \to B$.
\end{rem}

\begin{dfn}
By a propositional Kripke model for the usual propositional language $\mathcal{L}_p$, we mean a tuple $\mathcal{K}=(W, R, V)$, where $W$ is a set, $R \subseteq W \times W$ is a binary relation over $W$ (not necessarily transitive or reflexive) and $V: At(\mathcal{L}_{p}) \to P(W)$, where $At(\mathcal{L}_{p})$ is the set of atomic formulas of $\mathcal{L}_{p}$ and $P(W)$ is the powerset of $W$. A propositional Kripke model is called persistent if $V(p)$ is $R$-upward closed, i.e., if $u \in V(p)$ and $(u, v) \in R$ then $v \in V(p)$. The model is called serial if $R$ is serial, i.e., for all $u \in W$ there exists $v \in W$ such that $(u, v) \in R$. It is called reflexive if $R$ is reflexive, i.e., $(w, w) \in R$, for all $w \in W$. It is called transitive if $R$ is transitive, i.e., for all $u, v, w \in W$ if $(u, v) \in R$ and $(v, w) \in R$ then $(u, w) \in R$. It is called a rooted tree if it has an element $r$ such that for any $w \neq r$ we have $(r, w) \in R$, it is transitive and for any $u, v, w \in W$, if $(u, w), (v, w) \in R$ and $u \neq v$ then exactly one of the cases $(u, v) \in R$ or $(v, u) \in R$ happens. The forcing relation for a propositional Kripke model is defined as usual using the relation $R$ for implication, i.e., $u \Vdash A \to B$ if for any $v \in W$ that $(u, v) \in R$, if $v \Vdash A$ then $v \Vdash B$. A sequent $\Gamma \Rightarrow A$ is valid in a propositional Kripke model if for all $w \in W$, $\forall B \in \Gamma \; (w \Vdash B)$ implies $w \Vdash A$.
\end{dfn}

\begin{thm}\label{KripkeforPropositional}(Soundness-Completeness for Sub-intuitionistic Logics)
\begin{itemize}
\item[$(i)$]
$\mathbf{KPC}$ is sound and complete with respect to the class of all propositional Kripke models.  \cite{CJ1}
\item[$(ii)$]
$\mathbf{EKPC}$ is sound and complete with respect to the class of all serial propositional Kripke models. 
\item[$(iii)$]
$\mathbf{KTPC}$ is sound and complete with respect to the class of all reflexive propositional Kripke models.  \cite{CJ1}
\item[$(iv)$]
$\mathbf{BPC}$ is sound and complete with respect to the class of all transitive persistent propositional rooted Kripke trees. If $\Gamma=\emptyset$, the finite rooted transitive trees are sufficient.  \cite{BasicPropLogic}
\item[$(v)$]
$\mathbf{EBPC}$ is sound and complete with respect to the class of all transitive serial persistent propositional rooted Kripke trees. If $\Gamma=\emptyset$, the finite rooted transitive serial trees are sufficient. \cite{Ard}
\item[$(vi)$]
$\mathbf{IPC}$ is sound and complete with respect to the class of all transitive reflexive persistent propositional rooted Kripke trees. If $\Gamma=\emptyset$, the finite rooted transitive reflexive persistent trees are sufficient.
\end{itemize}
\end{thm}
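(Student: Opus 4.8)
The plan is to treat all six parts uniformly through the Lindenbaum algebra together with the representation Theorem~\ref{KripkeRepresentation}, reading off from the extra rules the additional conditions they impose on the accessibility relation. Soundness is in each case a routine induction on derivations, so I would only verify the rules specific to each system: the implication rule $\to I$, the formalized rules $(\wedge I)_f$, $(\vee E)_f$, $tr_f$, and the additional rules $E$, $MP$, $Cur$. The rule $\to I$ is valid because forcing $A \to B$ at a world quantifies over its $R$-successors, while $tr_f$, $(\wedge I)_f$, $(\vee E)_f$ merely express that $\to$ internalizes meets and joins and hold in every propositional Kripke model. The remaining rules fix the frame class: $MP$ requires reflexivity (so a world witnesses both premisses at itself), $E$ requires seriality, and $Cur$ requires transitivity together with persistence, matching parts (iii), (ii) and (iv)--(vi).

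For completeness, given a logic $\mathbf{L}$ among the six I would form its Lindenbaum algebra $\mathcal{A}(\mathbf{L})$. By Remark~\ref{t4-7} the connective $\to$ is an implication internalizing both the meet and the finite joins, so $\mathcal{A}(\mathbf{L})$ is a distributive, join-internalizing strong algebra. Theorem~\ref{KripkeRepresentation}, in its Case III (no $\nabla$), then produces a Kripke frame $\mathcal{K}$ on the prime filters of $\mathcal{A}(\mathbf{L})$ with the relation $\mathcal{R}$, together with a finite-meet and finite-join preserving embedding $i$. Putting $V(p)=i([p])$ and checking by induction that $\{w \mid w \Vdash B\}=i([B])$ for every formula $B$, validity of $\Gamma \Rightarrow A$ in $\mathcal{K}$ becomes $i([\Gamma]) \subseteq i([A])$, hence $[\Gamma] \leq [A]$ by injectivity of $i$, i.e. $\mathbf{L} \vdash \Gamma \Rightarrow A$. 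This already settles the baseline part (i) for $\mathbf{KPC}$, where $\mathcal{R}$ carries no extra condition.

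The extra rules are then absorbed by forcing conditions on $\mathcal{R}$. For (iii), the rule $MP$ gives $a \wedge (a \to b) \leq b$, which makes $\mathcal{R}$ reflexive. The genuinely new content is part (ii): the rule $E$ is equivalent to the identity $\top \to \bot = \bot$, that is $1 \to 0 = 0$, and the argument labelled $(*)$ inside the proof of Theorem~\ref{KripkeRepresentation} shows exactly that under $1 \to 0 = 0$ the relation $\mathcal{R}$ is serial, every prime filter extending by a Zorn's-lemma maximization to an $\mathcal{R}$-successor avoiding $0$. This gives soundness and completeness of $\mathbf{EKPC}$ for serial models. For (iv)--(vi) the rule $Cur$ (equivalently the relaxed $\to I$ of Remark~\ref{t4-7}) forces transitivity of $\mathcal{R}$ and persistence of the valuation, and the passage from the prime-filter frame to a rooted transitive tree (serial when $E$ is present, reflexive when $MP$ is) is the standard unravelling of \cite{BasicPropLogic} and \cite{Ard}, with $\mathbf{IPC}$ in (vi) the classical case.

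The hard part, and the only step the paper must genuinely supply rather than cite, is part (ii): matching the rule $E$ to the algebraic condition $1 \to 0 = 0$ and extracting seriality of $\mathcal{R}$ from it, which is precisely the maximal-filter construction of $(*)$. Everything else is either a routine soundness check or a reduction to the already established Kripke representation and its tree-unravelling in the cited literature.
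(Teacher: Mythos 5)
Your proposal is correct and follows essentially the same route as the paper: the paper only supplies a proof for part $(ii)$ (citing the literature for the rest), establishing soundness of $E$ over serial models directly and completeness by observing that the Lindenbaum algebra of $\mathbf{EKPC}$ is a distributive join-internalizing strong algebra with $1 \to 0 = 0$, then invoking the seriality argument labelled $(*)$ in the proof of Theorem \ref{KripkeRepresentation} and identifying validity in $(W, =_W, R, V)$ with validity in the propositional model $(W,R,V)$ for $\nabla$-free sequents. Your identification of part $(ii)$ as the only genuinely new content, and of the $E \leftrightarrow (1 \to 0 = 0) \leftrightarrow$ seriality correspondence as its crux, matches the paper exactly.
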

\begin{proof}
We have to prove the case of $\mathbf{EKPC}$. For soundness, note that the rule $E$ is valid in all serial Kripke models. Let $(W, R, V)$ be such a model. If 
\begin{center}
      \begin{tabular}{c}
  
    		\AxiomC{$\Gamma \vdash  \top \to \bot$}
    		\RightLabel{$E$}
    		\UnaryInfC{$\Gamma \vdash \bot$}
    		\DisplayProof
	\end{tabular}
\end{center}
and for some $u \in W$, $u \Vdash \Gamma$, then by the validity of the premise, $u \Vdash \top \to \bot$. Since $R$ is serial, there exists $v \in W$ such that $(u, v) \in R$. Hence, $v \Vdash \bot$, which is impossible. Hence, $u \nVdash \Gamma$ from which $u \Vdash \Gamma \Rightarrow \bot$. For completeness, use the Lindenbaum algebra for $\mathbf{EKPC}$. This algebra is clearly a distributive join internalizing strong algebra that satisfies $1 \to 0=0$. Therefore, by part $(*)$ in the proof of Theorem \ref{KripkeRepresentation}, it is possible to embed the algebra into its canonical Kripke model with a serial relation $R$. Note that the Kripke frame from the proof of Theorem \ref{KripkeRepresentation} is in the form $(W, =_W, R)$. Therefore, since the validity for $\nabla$-free sequents in any model of the form $(W, =_W, R, V)$ is equivalent to its validity in the propositional Kripke model $(W, R, V)$, the completeness follows. 
\end{proof}

Note that the language $\mathcal{L}_p$ is a fragment of the full language $\mathcal{L}_{\nabla}$. Therefore, it is meaningful to use spacetimes and Kripke models (not propositional Kripke models we have just defined) as models for sub-intuitionistic logics.

\begin{thm}\label{Embedding}(Embedding Theorem)  Assume $\Gamma \cup \{A\} \subseteq \mathcal{L}_p$, where $\mathcal{L}_p$ is the usual language of propositional logic. Then:
\begin{itemize}
\item[$(i)$]
$ \Gamma \vdash_{\mathbf{KPC}} A$ iff $\Gamma \vdash_{i\mathbf{STL}} A$ iff $i\mathbf{ST} \vDash \Gamma \Rightarrow A$ iff $\mathbf{K} \vDash \Gamma \Rightarrow A$.
\item[$(ii)$]
$ \Gamma \vdash_{\mathbf{EKPC}} A$ iff $\Gamma \vdash_{i\mathbf{STL}(wF)} A$ iff  $i\mathbf{ST}(wF) \vDash \Gamma \Rightarrow A$ iff $\mathbf{K}(wF) \vDash \Gamma \Rightarrow A$.
\item[$(iii)$]
$ \Gamma \vdash_{\mathbf{KTPC}} A$ iff $\Gamma \vdash_{i\mathbf{STL}(F)} A$ iff  $i\mathbf{ST}(F) \vDash \Gamma \Rightarrow A$ iff $\mathbf{K}(F) \vDash \Gamma \Rightarrow A$.
\item[$(iv)$]
$ \Gamma \vdash_{\mathbf{BPC}} A$ iff $\Gamma \vdash_{i\mathbf{STL}(P)} A$ iff $i\mathbf{ST}(P) \vDash \Gamma \Rightarrow A$ iff $\mathbf{K}(P) \vDash \Gamma \Rightarrow A$.
\item[$(v)$]
$ \Gamma \vdash_{\mathbf{EBPC}} A$ iff $\Gamma \vdash_{i\mathbf{STL}(P, wF)} A$ iff $i\mathbf{ST}(P, wF) \vDash \Gamma \Rightarrow A$ iff $\mathbf{K}(P, wF) \vDash \Gamma \Rightarrow A$.
\item[$(vi)$]
$ \Gamma \vdash_{\mathbf{IPC}} A$ iff $\Gamma \vdash_{i\mathbf{STL}(P, F)} A$ iff $i\mathbf{ST}(P, F) \vDash \Gamma \Rightarrow A$ iff $\mathbf{K}(P, F) \vDash \Gamma \Rightarrow A$.
\end{itemize}
\end{thm}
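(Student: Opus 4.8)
The theorem is a grand unification: for each of the six sub-intuitionistic logics it asserts a four-way equivalence between provability in the sub-intuitionistic system, provability in the corresponding structural spacetime logic $i\mathbf{STL}(\mathcal{R})$, validity over the spacetime class $i\mathbf{ST}(\mathcal{R})$, and validity over the Kripke class $\mathbf{K}(\mathcal{R})$. My strategy is to prove each of the six items by establishing a \emph{cycle} of implications that closes the loop, so that no single leg needs to be proved in both directions. Concretely, for item $(i)$ I would show
\[
\Gamma \vdash_{\mathbf{KPC}} A \;\Longrightarrow\; \Gamma \vdash_{i\mathbf{STL}} A \;\Longrightarrow\; i\mathbf{ST} \vDash \Gamma \Rightarrow A \;\Longrightarrow\; \mathbf{K} \vDash \Gamma \Rightarrow A \;\Longrightarrow\; \Gamma \vdash_{\mathbf{KPC}} A,
\]
and the analogous cycle for each of the other five rows, with the appropriate rule-scheme decorations $wF$, $F$, $P$, $(P,wF)$, $(P,F)$ attached throughout.

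\textbf{The forward legs (syntactic embedding).} The first implication in each cycle is the core embedding of the sub-intuitionistic system into the structural logic of spacetime. Here I would argue rule by rule that every propositional, formalized and additional rule of $\mathbf{KPC}$ (respectively its extensions) is derivable in $i\mathbf{STL}$ (respectively $i\mathbf{STL}(\mathcal{R})$). The formalized rules $(\wedge I)_f$, $(\vee E)_f$ and $tr_f$ express precisely that $\to$ internalizes the meet and finite-join structure, and these are exactly the properties shown provable in the system in Remark~\ref{PropertiesOgfLogic} and Remark~\ref{MonoidalInternalForMeet}. The rule $\to I$ becomes an instance of $R\to$ once one notes that in the presence of all structural rules the premise $A \vdash B$ yields $A, \nabla\Gamma \Rightarrow B$ by weakening. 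The correspondences for the additional rules are the delicate matches: I expect $E$ to align with $(wF)$ via the equivalence $1\to\bot\Rightarrow\bot$, the rule $Cur$ together with $MP$-free reasoning to align with $(P)$, and $MP$ to align with $(F)$, as already signalled by the naming conventions and the algebraic characterizations in the $\nabla$-condition definitions. These alignments are what force the specific rule-scheme assignments in each of the six rows.

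\textbf{The semantic legs.} The second implication in each cycle, $i\mathbf{STL}(\mathcal{R})\vdash \Rightarrow i\mathbf{ST}(\mathcal{R})\vDash$, is immediate from the Soundness Theorem~\ref{t4-2}. The third implication, $i\mathbf{ST}(\mathcal{R})\vDash \Rightarrow \mathbf{K}(\mathcal{R})\vDash$, follows because every $\mathbf{K}(\mathcal{R})$-Kripke model gives rise, via Example~\ref{KripkeToSpacetime} and the construction in the Kripke soundness proof, to a spacetime $\mathcal{S}_{\mathcal{K}}=(U(W,\leq),\nabla_{\mathcal{K}})$ lying in $i\mathbf{ST}(\mathcal{R})$ with identical valid sequents; thus validity over the larger spacetime class transfers to validity over the Kripke class. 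The closing leg, $\mathbf{K}(\mathcal{R})\vDash \Rightarrow \Gamma\vdash_{\mathbf{KPC}(\mathcal{R})}$, is where I would invoke the soundness-completeness results for the propositional Kripke semantics, Theorem~\ref{KripkeforPropositional}: since $\Gamma\cup\{A\}\subseteq\mathcal{L}_p$ is $\nabla$-free, validity of $\Gamma\Rightarrow A$ in every $\mathbf{K}(\mathcal{R})$-model $(W,=_W,R,V)$ coincides with validity in the underlying propositional Kripke model $(W,R,V)$, and the corresponding completeness part of Theorem~\ref{KripkeforPropositional} then yields the sub-intuitionistic derivation.

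\textbf{The main obstacle.} The genuine difficulty is not any single leg in isolation but the bookkeeping that ties the frame conditions together correctly: I must verify that the relation-theoretic condition attached to each sub-intuitionistic logic in Theorem~\ref{KripkeforPropositional} (seriality for $\mathbf{EKPC}$, reflexivity for $\mathbf{KTPC}$, transitivity and the rooted-tree structure for $\mathbf{BPC}$, and their combinations) matches \emph{exactly} the $\mathbf{K}(\mathcal{R})$ conditions $(P)$, $(F)$, $(wF)$ and their conjunctions, \emph{and} that these in turn match the spacetime rule-scheme conditions preserved by the representation theorems. The subtlety is that the $\mathbf{BPC}$-style logics require persistence and transitivity on \emph{rooted trees}, whereas $\mathbf{K}(\mathcal{R})$-models carry the order $\leq$ separately from the accessibility $R$; I would need to check that the Case~III/IV canonical-model construction in Theorem~\ref{KripkeRepresentation} delivers exactly the right $R$ (via the defining relation $(P,Q)\in\mathcal{R}$) so that $(P)$ forces $R\subseteq\;\leq$ and the interplay with the temporal reduct reproduces the transitivity demanded on the sub-intuitionistic side. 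Once these frame correspondences are pinned down uniformly, each of the six cycles closes and the theorem follows.
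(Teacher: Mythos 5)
Your proposal is correct and follows essentially the same route as the paper: the same cycle of implications (rule-by-rule embedding into $i\mathbf{STL}(\mathcal{R})$, topological soundness, passage from spacetime validity to Kripke validity via $\mathcal{S}_{\mathcal{K}}$, and closure through the propositional Kripke completeness of Theorem \ref{KripkeforPropositional}), with the correct alignments $Cur\leftrightarrow(P)$, $MP\leftrightarrow(F)$, $E\leftrightarrow(wF)$. The one subtlety you flag is resolved in the paper exactly as you anticipate: for the $\mathbf{BPC}$-style cases one takes $\leq_R$ to be the reflexive extension of $R$ on the transitive persistent rooted tree, which makes $(W,\leq_R,R,V)$ a $\mathbf{K}(P)$-model with the same $\nabla$-free validity.
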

\begin{proof}
Let us start with the embedding of the sub-intuionistic logics into the logics of spacetime. This part is just the syntactical version of the algebraic fact that the connective $\to$ in a temporal algebra is really an implication which internalizes both the monoidal structure and the finite joins. However, to show the proof theoretical flavour of the system, let us present the proof trees for all sub-intuitionistic rules. This hopefully shows the more natural adjoint-based approach to implication compared to the sub-intuitionistic proposal.\\
To prove the embedding, we use induction on the length of the sub-intuitionistic proof. Note that all the axioms and the propositional rules except $\to I$ are available in the basic system $i\mathbf{STL}$. Therefore, it remains to prove the formalized rules and the rule $\rightarrow I$. This is what we will do in the following proof trees. Note that by a double line rule, we mean the existence of an easy omitted proof tree between the upper part and the lower part of the double line and by the label $S$ together with a double line, we mean that the omitted tree is a simple combination of the structural rules. For the formalized $\wedge I$, we have:

\begin{center}
\begin{tabular}{c}
      
        \AxiomC{$ $}
        \doubleLine
        \UnaryInfC{$\nabla (A \rightarrow B), A \Rightarrow B$}
        \doubleLine
        \RightLabel{\tiny{$S$}}
        \UnaryInfC{$\nabla (A \rightarrow B), \nabla (A \rightarrow C), A \Rightarrow C$}
        
	    \AxiomC{$ $}
        \doubleLine
        \UnaryInfC{$\nabla (A \rightarrow C), A \Rightarrow C$}
        \doubleLine
        \RightLabel{\tiny{$S$}}
        \UnaryInfC{$\nabla (A \rightarrow B), \nabla (A \rightarrow C), A \Rightarrow C$}
	    \RightLabel{\tiny{$R \wedge $}}
        \BinaryInfC{$\nabla (A \rightarrow B), \nabla (A \rightarrow C), A \Rightarrow B \wedge C$}
        \doubleLine
	    \UnaryInfC{$\nabla [(A \rightarrow B) \wedge (A \rightarrow C)], A \Rightarrow B \wedge C$}
	    \RightLabel{\tiny{$R \rightarrow $}}
        \UnaryInfC{$(A \rightarrow B) \wedge (A \rightarrow C) \Rightarrow A \rightarrow (B \wedge C)$}
        \doubleLine
        \UnaryInfC{$(A \rightarrow B), (A \rightarrow C) \Rightarrow A \rightarrow (B \wedge C)$}
        \DisplayProof
        
\end{tabular}
\end{center}

and for the formalized $\vee I$, we have:

\begin{center}
\begin{tabular}{c}
      \AxiomC{$ $}
        \doubleLine
        \UnaryInfC{$\nabla (A \rightarrow C), A \Rightarrow C$}
        \doubleLine
        \RightLabel{\tiny{$S$}}
        \UnaryInfC{$\nabla (A \rightarrow C), \nabla (B \rightarrow C), A \Rightarrow C$}
        
	    \AxiomC{$ $}
        \doubleLine
        \UnaryInfC{$\nabla (B \rightarrow C), B \Rightarrow C$}
	    \RightLabel{\tiny{$L \vee $}}
	    \doubleLine
        \LeftLabel{\tiny{$S$}}
        \UnaryInfC{$\nabla (A \rightarrow C), \nabla (B \rightarrow C), B \Rightarrow C$}
        \BinaryInfC{$\nabla (A \rightarrow C), \nabla (B \rightarrow C), A \vee B \Rightarrow C$}
        \doubleLine
	    \UnaryInfC{$\nabla [(A \rightarrow C) \wedge (B \rightarrow C)], A \vee B \Rightarrow C$}
	    \RightLabel{\tiny{$R \rightarrow $}}
        \UnaryInfC{$(A \rightarrow C) \wedge (B \rightarrow C) \Rightarrow A \vee B \rightarrow C$}
        \doubleLine
        \UnaryInfC{$(A \rightarrow C), (B \rightarrow C) \Rightarrow A \vee B \rightarrow C$}
        \DisplayProof
        
\end{tabular}
\end{center}

for the formalized $tr$, we have:

\begin{center}
\begin{tabular}{c}
      
        \AxiomC{$ $}
        \doubleLine
        \UnaryInfC{$\nabla (A \rightarrow B), A \Rightarrow B$}
        
	    \AxiomC{$ $}
        \doubleLine
        \UnaryInfC{$\nabla (B \rightarrow C), B \Rightarrow C$}
	    \RightLabel{\tiny{$cut$}}
        \BinaryInfC{$\nabla (A \rightarrow B), \nabla (B \rightarrow C), A \Rightarrow C$}
        \doubleLine
	    \UnaryInfC{$\nabla [(A \rightarrow B) \wedge (B \rightarrow C)], A \Rightarrow C$}
	    \RightLabel{\tiny{$\rightarrow I$}}
        \UnaryInfC{$(A \rightarrow B) \wedge (B \rightarrow C) \Rightarrow A \rightarrow C$}
        \doubleLine
        \UnaryInfC{$(A \rightarrow B), (B \rightarrow C) \Rightarrow A \rightarrow C$}
        \DisplayProof
\end{tabular}
\end{center} 

And finally for $\rightarrow I$ we have:
\begin{center}
\begin{tabular}{c}
     
        \AxiomC{$ \Rightarrow \top $}

        \AxiomC{$ A \Rightarrow B $}
        \RightLabel{\tiny{$LW$}}
        \UnaryInfC{$\nabla \top , A \Rightarrow B$}
        	\RightLabel{\tiny{$R \rightarrow $}}
        \UnaryInfC{$\top \Rightarrow A \to B$}
        \RightLabel{\tiny{$cut$}}
        \BinaryInfC{$ \Rightarrow A \rightarrow B$}
        \DisplayProof
\end{tabular}
\end{center}
Now we have to show that the additional rules are provable by their corresponding additional rules in the logics of spacetime. For $Cur$, we will use its characterization based on $\rightarrow I$ as mentioned in the Remark \ref{t4-7}. 

\begin{center}
\begin{tabular}{c}
      
        \AxiomC{$ \nabla (\bigwedge \Gamma) \Rightarrow \nabla (\bigwedge \Gamma) $}
        \RightLabel{\tiny{$P$}}
        \UnaryInfC{$\nabla (\bigwedge \Gamma) \Rightarrow \bigwedge \Gamma $}
        \AxiomC{$\Gamma, A \Rightarrow B $}
        \doubleLine
        \UnaryInfC{$\bigwedge \Gamma , A \Rightarrow B$}
        	\RightLabel{\tiny{$cut$}}
        \BinaryInfC{$\nabla (\bigwedge \Gamma) , A \Rightarrow B$}
        \RightLabel{\tiny{$R \rightarrow $}}
	    \UnaryInfC{$\bigwedge \Gamma \Rightarrow A \rightarrow B$}
	    \doubleLine
        \UnaryInfC{$ \Gamma \Rightarrow A \rightarrow B$}
        \DisplayProof
\end{tabular}
\end{center}

For $MP$ and $E$ we have:

\begin{center}
\begin{tabular}{c c}
      
        \AxiomC{$ A \rightarrow B \Rightarrow A \rightarrow B $}
        \RightLabel{\tiny{$F$}}
        \UnaryInfC{$A \rightarrow B \Rightarrow \nabla (A \rightarrow B) $}
        
          \AxiomC{$ $}
        \doubleLine
        \UnaryInfC{$A, \nabla (A \rightarrow B) \Rightarrow B$}
        
        	\RightLabel{\tiny{$L \rightarrow $}}
        \BinaryInfC{$A, A \rightarrow B \Rightarrow B$}
        \DisplayProof
        &\;
        
        \AxiomC{$ $}
        \doubleLine
        \UnaryInfC{$\top, \nabla (\top \rightarrow \bot) \Rightarrow \bot $}
        \doubleLine
        \UnaryInfC{$\nabla (\top \rightarrow \bot) \Rightarrow \bot $}
        	\RightLabel{\tiny{$wF$}}
        \UnaryInfC{$\top \rightarrow \bot \Rightarrow \bot$}
        \DisplayProof
\end{tabular}
\end{center}
This completes the embedding part of the theorem. To complete the equivalences, it is enough to close the circle by coming back from the validity in the Kripke models to provability in the sub-intuitionistic logics. For $\mathbf{KPC}$, by Theorem \ref{KripkeforPropositional}, it is sufficient to prove $\Gamma \Rightarrow A$ is valid in all propositional Kripke models. Let $(W, R, V)$ be a propositional Kripke model. Consider the tuple $(W, =, R, V)$, where the order is just equality. This tuple is a Kripke model, since $R$ is compatible with the equality and $V$ maps atomic formulas to $=$-upward closed subsets of $W$ that are just all subsets. Since $\Gamma \Rightarrow A$ is valid in all Kripke models, it is valid in $(W, =, R, V)$. However, the forcing in this model and the original propositional model is the same for $\nabla$-free formulas. Therefore, $\Gamma \Rightarrow A$ is also valid in $(W, R, V)$. For $(ii)$ and $(iii)$ the argument is similar. For $(iv)$, again by Theorem \ref{KripkeforPropositional}, it is sufficient to prove the validity of $\Gamma \Rightarrow A$ in all transitive persistent Kripke trees. Let $(W, R, V)$ be such a tree. Define $\leq_R$ as the reflexive extension of $R$, i.e., $R \cup \{(w, w) \in W^2 | w \in W\}$. Since the model is a tree, $\leq_R$ is a partial order. Since, $R$ is transitive, $R$ is also compatible with $\leq_R$ and hence $(W, \leq_R, R, V)$ is a Kripke frame. Moreover, note that $R \subseteq \; \leq_R$ and if a set is $R$-upward closed, it is also $\leq_R$ upward closed. Therefore, $(W, \leq_R, R, V)$ is a $\mathbf{K}(P)$-Kripke model and hence $\Gamma \Rightarrow A$ is valid in $(W, \leq_R, R, V)$. Again since the validity of $\Gamma \Rightarrow A$ in $(W, R, V)$ is the same as validity in $(W, \leq_R, R, V)$ for $\nabla$-free formulas, the theorem follows. The remained cases are similar to $(iv)$.
\end{proof}

In the presence of the rule $Cur$, it is also possible to strenghten the topological completeness to capture the logics via one arbitrary infinite fixed Hausdorff space. For that matter, we need the following topological lemma:

\begin{lem}\label{Hausdorff}
Let $X$ be an infinite Hausdorff space. Then every finite rooted tree is a surjective continuous image of $X$. 
\end{lem}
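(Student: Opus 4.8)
The plan is to induct on the number of nodes of the finite rooted tree $Y$, which I regard as a finite poset equipped with its Alexandroff topology, the opens being exactly the up-sets. Here the root $r$ is the least element, so its only open neighbourhood is all of $Y$, while for any child $c_i$ of $r$ the up-set $T_i = \{y : y \ge c_i\}$ is open and is itself a finite rooted tree with root $c_i$, carrying its own Alexandroff topology. The non-root nodes split as a disjoint union $Y \setminus \{r\} = T_1 \sqcup \cdots \sqcup T_m$ of these branch subtrees, each $T_i$ open and omitting $r$, and $\{r\}$ is closed. This decomposition at the root is what drives the induction.

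The inductive step rests on a purely topological fact that I would isolate as a sublemma: every infinite Hausdorff space contains a countably infinite family of pairwise disjoint nonempty open sets. Hausdorffness is essential and not merely $T_1$, since the cofinite topology on an infinite set is $T_1$ yet any two nonempty opens meet. Granting this, I would group the family by residues into $m$ subfamilies and take unions to obtain pairwise disjoint open sets $V_1, \dots, V_m$, each a union of infinitely many disjoint nonempty opens and hence infinite, while setting one original piece aside so that $C := X \setminus (V_1 \cup \cdots \cup V_m)$ is nonempty; being the complement of an open set, $C$ is then closed. Each $V_i$ is open in $X$, hence an infinite Hausdorff space, so by the induction hypothesis there is a continuous surjection $f_i : V_i \to T_i$. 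I then define $f : X \to Y$ by $f|_{V_i} = f_i$ on each branch piece and $f|_C \equiv r$ on the remainder.

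Surjectivity is immediate: $r$ is hit because $C \neq \emptyset$, and every node of each $T_i$ is hit because $f_i$ is onto. For continuity it suffices to pull back the up-sets of $Y$. The whole space $Y$ pulls back to $X$; any other open $W$ omits $r$, so $W = \bigsqcup_i (W \cap T_i)$ with each $W \cap T_i$ an up-set of $T_i$ and therefore open in $T_i$, whence $f^{-1}(W) = \bigcup_i f_i^{-1}(W \cap T_i)$ is a union of sets open in the various $V_i$, each of which is open in $X$ because $V_i$ is. The base case $|Y| = 1$ is the constant map, continuous and onto since $X \neq \emptyset$.

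The main work, and the only genuinely topological obstacle, is the sublemma. I would prove it by repeatedly extracting a nonempty open together with a disjoint infinite open: given an infinite Hausdorff space $Z$, if $Z$ has an isolated point $p$ I set aside $\{p\}$ and continue inside the open infinite set $Z \setminus \{p\}$; if $Z$ is crowded, I pick distinct points $p, q$, separate them by disjoint opens $U \ni p$ and $V \ni q$, note that $V$ is infinite because nonempty opens in a crowded space are infinite, record $U$, and recurse into $V$. Maintaining the invariant that the surviving region is open in the ambient $X$ and infinite at every stage yields the desired pairwise disjoint family, its disjointness following from the nesting of the successive regions. The care needed is precisely in keeping that surviving piece simultaneously open in $X$ and infinite throughout the recursion.
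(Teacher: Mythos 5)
Your proof is correct, and its overall shape---induction on the tree, peeling off the root, mapping disjoint open pieces of $X$ onto the branch subtrees and the leftover onto the root, then checking continuity against up-sets---matches the paper's Claim II. Where you genuinely diverge is in the key topological input. The paper works quantitatively: its Claim I produces, for each $N$ and $K$, a finite threshold $M_{N,K}$ such that every Hausdorff space with at least $M_{N,K}$ points contains $K$ pairwise disjoint opens each with at least $N$ points (proved by a double induction with a doubling trick), and its Claim II then inducts on the tree size while carrying these thresholds along, invoking the infiniteness of $X$ only at the very end. You instead prove the qualitative sublemma that every infinite Hausdorff space contains countably many pairwise disjoint nonempty opens (via the isolated-point/crowded dichotomy, where the crowded case needs that nonempty opens in a crowded $T_1$ space are infinite---which Hausdorffness supplies), union them in residue classes to get $m$ pairwise disjoint \emph{infinite} open pieces plus a nonempty remainder, and then apply the induction hypothesis verbatim to each piece because an open subspace of a Hausdorff space is again an infinite Hausdorff space. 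Your route buys a cleaner induction with no cardinality bookkeeping; the paper's route buys an explicit finite bound $m_n$ on how large a Hausdorff space must be to surject onto every rooted tree with at most $n$ nodes, which is strictly more information than the lemma requires. Both arguments use the Hausdorff separation axiom in the same essential way, and your cofinite-topology remark correctly identifies why $T_1$ alone would not suffice.
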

\begin{proof}
Let us first prove the following claims:\\

\textbf{Claim I.} For any natural numbers $N$ and $K$, there exists a natural number $M=M_{N, K}$ such that for any Hausdorff space $X$ with cardinality greater than or equal to $M$, there are $K$ many open mutually disjoint subspaces of $X$ each of which has at least $N$ elements.\\

\textit{Proof of the Claim I}. We prove the claim by induction on $N$. For $N=1$, pick $M_{1, K}=K$ and prove the claim by induction on $K$. For $K=1$, it is enough to pick the whole space as the open subset. To prove the claim for $K+1$, by IH, since $M_{1, K+1}=K+1 \geq K$, it is possible to find at least $K$ non-empty mutually disjoint open subsets  $\{U_i\}_{i=1}^K$. Pick $\{x_i\}_{i=1}^K$ as some elements such that $x_i \in U_i$. It is possible because they are not empty. Since the space has at least $K+1$ elements, there should be some point $x \notin \{x_i\}_{i=1}^K$. Now, use the Hausdorff condition to find a sequence $\{V_i\}_{i=1}^{K+1}$ of non-empty mutually disjoint open subsets. The argument is as follows. For any $1 \leq i \leq K$, there exist disjoint open subsets $A_i$ and $B_i$ such that $x \in A_i$ and $x_i \in B_i$. For any $1 \leq i \leq K$, take $V_i=U_i \cap B_i$ and also define $V_{K+1}=\bigcap_{i=1}^K A_i$. They are clearly open non-empty subsets that are mutually disjoint. \\

Now, if we have the claim for $N$, we want to prove it for $N+1$. By IH we know that there exists $M'$ that works for $N$ and $K'=2K$. We claim that $M=M'$ works for $N+1$ and $K$. If $X$ has at least $M'$ elements, then there are at lest $2K$ mutually disjoint opens such that each of them has at least $N$ elements. If we arrange these $2K$, to $K$ pairs and compute their unions, then we have $K$ opens, each of which contains at least $2N$ elements, which is greater than or equal to $N+1$. \qed \\

\textbf{Claim II.} For any natural number $n$, there exists a natural number $m$ such that for any Hausdorff space with at least $m$ elements and any finite rooted tree with at most $n$ elements, there exists a continuous surjection from the space to the tree.\\

\textit{Proof of the Claim II}. We will prove the claim by induction on $n$. For $n=1$ pick $m=1$ and use the constant function. For $n+1$, by IH, we know that for $n$ there exists an $m'$. Pick $m$ as the number in the claim 1, for $N=m'$ and $K=n$. Therefore, the space $X$ has at least $n$ opens each of which contains at least $m'$ elements. Call them $\{U_i\}_{i=1}^n$. Since the tree has $n+1$ elements, there are at most $n$ branches for the root such that each of them has at most $n$ nodes. Call these branches $\{T_j\}_{j=1}^r$ for some $r \leq n$. By IH, we can find a surjective continuous function $f_i: U_i \to T_i$ for any $1 \leq i \leq r$. Now define $f: X \to T$ as the extension of the union of $f_i$'s such that it sends any $x \notin \bigcup_{i=1}^r U_i$ to the root of the tree. The function is clearly surjective. For continuity, note that any open subset of the tree is an upward-closed subset which means that it is either equal to $T$ or is a union of the upward-closed subsets of the $T_i$'s. For the first case, $f^{-1}(T)=X$ which is open. For the second case, it is implied from the continuity of $f_i$ and the condition that $U_i$ is open. \qed \\

To prove the theorem, let $T$ be a rooted tree with $n$ elements. Then by Claim II, there exists a bound $m$ such that for any Hausdorff space $X$ with at least $m$ elements, there exists a continuous surjection from $X$ to the tree. The theorem follows from the fact that $X$ is infinite and hence has at least $m$ elements.
\end{proof}

\begin{dfn}
Let $\mathcal{R} \subseteq \{P, F, wF\}$ and $X$ be a topological space. By $X \vDash^g_{\mathcal{R}} A$, we mean that for any spacetime $(\mathcal{O}(X), \nabla)$ and any $V: At(\mathcal{L}_p) \to \mathcal{O}(X)$, if  $(\mathcal{O}(X), \nabla, V) \vDash i\mathbf{STL}(\mathcal{R})$ then $(\mathcal{O}(X), \nabla, V) \vDash A$.
\end{dfn}

\begin{thm}(Topological Completeness Theorem, Strong version) Let $X$ be an infinite Hausdorff space. Then:
\begin{itemize}
\item[$(i)$]
If $X \vDash^g_{P} A$ then $\mathbf{BPC} \vdash A$.
\item[$(ii)$]
If $X \vDash^g_{P, wF} A$ then $\mathbf{EBPC} \vdash A$.
\item[$(iii)$]
If $X \vDash^g_{P, F} A$ then $\mathbf{IPC} \vdash A$.
\end{itemize}
\end{thm}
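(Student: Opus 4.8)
The plan is to prove each part by contraposition, combining the Kripke completeness of the relevant sub-intuitionistic logic (Theorem \ref{KripkeforPropositional}) with the topological Lemma \ref{Hausdorff}. I will carry out $(i)$ in detail and indicate the purely bookkeeping changes for $(ii)$ and $(iii)$. So assume $\mathbf{BPC} \nvdash A$; I must produce a single spacetime model over $\mathcal{O}(X)$ that validates $i\mathbf{STL}(P)$ yet refutes $A$, which is exactly what is needed to defeat $X \vDash^g_P A$.

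By Theorem \ref{KripkeforPropositional}$(iv)$ and the finite-model clause stated there for empty contexts, $\mathbf{BPC} \nvdash A$ yields a finite rooted transitive persistent propositional Kripke tree $(W, R, V_0)$ refuting $A$. Following the conversion in the proof of the Embedding Theorem (Theorem \ref{Embedding}$(iv)$), I let $\leq_R$ be the reflexive closure of $R$; persistence guarantees that each $V_0(p)$ is a $\leq_R$-upset, so $(W, \leq_R, R, V_0)$ is a Kripke model for $\mathcal{L}_\nabla$, and $R \subseteq \,\leq_R$ makes it a $\mathbf{K}(P)$-model. Since $A$ is $\nabla$-free its forcing is untouched, so $A$ is still refuted. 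I then pass to the spacetime $(\mathcal{O}(Y), \nabla_{\mathcal{K}})$ of Example \ref{KripkeToSpacetime} with $Y = (W, \leq_R)$, a finite — hence Alexandroff — rooted tree carrying its upset topology. The computation in the Kripke soundness proof gives $\nabla_{\mathcal{K}} U \subseteq U$ for every upset $U$, so $(\mathcal{O}(Y), \nabla_{\mathcal{K}}) \in i\mathbf{ST}(P)$ and, by the soundness Theorem \ref{t4-2}, the model $(\mathcal{O}(Y), \nabla_{\mathcal{K}}, \bar V_0)$ validates every theorem of $i\mathbf{STL}(P)$ while refuting $A$.

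It remains to push this finite counter-model onto the fixed space $X$. As $X$ is an infinite Hausdorff space and $Y$ is a finite rooted tree, Lemma \ref{Hausdorff} provides a continuous surjection $f\colon X \to Y$; feeding $f$, $\nabla_{\mathcal{K}}$ and $\bar V_0$ into Theorem \ref{TruthTransformation} produces $\nabla_X$ on $\mathcal{O}(X)$ and a valuation $U$ for which every sequent is valid in $(\mathcal{O}(X), \nabla_X, U)$ if and only if it is valid in $(\mathcal{O}(Y), \nabla_{\mathcal{K}}, \bar V_0)$.

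The delicate step — and the one I expect to be the only real obstacle — is to check that the transported model still satisfies $i\mathbf{STL}(P)$, so that it is an admissible witness in the definition of $X \vDash^g_P A$. One cannot argue this at the level of spacetimes: the pullback $\nabla_X = f^{-1}\nabla_{\mathcal{K}} f_!$ need not obey $\nabla_X \leq \mathrm{id}$, and accordingly Lemma \ref{1} transfers only the schemes $\{F, wF\}$, not $(P)$. The way around this is to read ``model of $i\mathbf{STL}(P)$'' as ``validates every provable sequent'' and to exploit the \emph{sequent-by-sequent} biconditional of Theorem \ref{TruthTransformation}: each theorem of $i\mathbf{STL}(P)$ is valid over $Y$, hence valid over $X$, so $(\mathcal{O}(X), \nabla_X, U) \vDash i\mathbf{STL}(P)$ even though $\nabla_X$ itself may fail $(P)$ as a spacetime inequality. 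Since the empty-context sequent $\Rightarrow A$ fails over $Y$ it also fails over $X$, giving $X \nvDash^g_P A$ and proving $(i)$. For $(ii)$ and $(iii)$ I begin instead from a serial, respectively reflexive, finite rooted transitive persistent tree supplied by Theorem \ref{KripkeforPropositional}$(v)$, $(vi)$; seriality forces $\nabla_{\mathcal{K}}$ to satisfy $(wF)$ and reflexivity forces $(F)$, so the $Y$-model validates $i\mathbf{STL}(P, wF)$, respectively $i\mathbf{STL}(P, F)$, and the identical transfer argument finishes the proof.
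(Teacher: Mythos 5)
Your proof is correct and follows essentially the same route as the paper's: a finite rooted tree from Kripke completeness of the sub-intuitionistic logic, conversion to a $\mathbf{K}(\mathcal{R})$-Kripke model and then to a spacetime over the Alexandroff upset space, and transport to $X$ via Lemma \ref{Hausdorff} and Theorem \ref{TruthTransformation}; the paper merely argues in the direct rather than the contrapositive direction, which is immaterial. Your ``delicate step'' --- that $(P)$ need not survive as a spacetime inequality for the transported $\nabla_X$, so one must instead invoke the sequent-by-sequent biconditional together with the ``validates every provable sequent'' reading of $\vDash^g$ --- is precisely the reading the paper's own proof relies on (it concludes $(\mathcal{O}(X),\nabla,U) \vDash i\mathbf{STL}(P)$ from sameness of validity, not from membership in $i\mathbf{ST}(P)$).
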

\begin{proof}
The proof is a truth transformation sequence starting from a propositional Kripke tree, going to an appropriate Kripke model and then to a suitable spacetime to finally land in a spacetime over $X$, using Theorem \ref{TruthTransformation}. More precisely, for $(i)$, let $(W, R, V)$ be a finite transitive rooted tree. To prove $\mathbf{BPC} \vdash A$, by Theorem \ref{KripkeforPropositional}, it is enough to show that $(W, R, V) \Vdash A$. As we have seen in the proof of Theorem \ref{Embedding}, it is possible to define the Kripke model $\mathcal{K}=(W, \leq_R, R, V)$ such that $\mathcal{K} \vDash i\mathbf{STL}(P)$ and the validity of $\nabla$-free formulas in $(W, R, V)$ and $\mathcal{K}$ are equivalent. Therefore, it is enough to prove $\mathcal{K} \vDash A$. By Example \ref{KripkeToSpacetime}, it is possible to turn the Kripke model $\mathcal{K}$ to the spacetime $ \mathcal{S}_{\mathcal{K}}$ equipped with a valuation $\bar{V}$, again with the same validity for every sequents. Hence, we will show that $(\mathcal{S}_{\mathcal{K}}, \bar{V}) \vDash A$. By Lemma \ref{Hausdorff}, there exists a surjective continuous function $f: X \to W$ where $W$ is considered with the upset topology by the order $\leq_R$. By Theorem \ref{TruthTransformation} and the fact that the order topology is Alexandroff, there are $\nabla: \mathcal{O}(X) \to \mathcal{O}(X)$ and $U: At(\mathcal{L}_{\nabla}) \to \mathcal{O}(X)$ such that the validity of any sequent in $(\mathcal{S}_{\mathcal{K}}, V)$ and $(\mathcal{O}(X), \nabla, U)$ are the same. Hence, it is enough to prove $(\mathcal{O}(X), \nabla, U) \vDash A$. Since, $(\mathcal{O}(X), \nabla, U)$, the topological model $(\mathcal{S}_{\mathcal{K}}, \bar{V})$ and the Kripke model $\mathcal{K}$ have the same validity and $\mathcal{K} \vDash i\mathbf{STL}(P)$, we have $(\mathcal{O}(X), \nabla, U) \vDash  i\mathbf{STL}(P)$. Finally, since, $X \vDash^g_{P} A$, we have $(\mathcal{O}(X), \nabla, U) \vDash A$. The proofs for $(ii)$ and $(iii)$ are exactly the same.
\end{proof}

\vspace{4pt}
\textbf{Acknowledgment.} 
We are really grateful to Majid Alizadeh, Mohammad Ardeshir, Raheleh Jalali and Masoud Memarzadeh for their thoughtful remarks and the invaluable discussions that we have had.


\begin{thebibliography}{99} 
\addcontentsline{toc}{section}{References} 

\bibitem{Ab0}
Abramsky, Samson. ``Domain theory and the logic of observable properties." PhD
Thesis, Queen Mary College, University of London. (1987)

\bibitem{Ab1}
Abramsky, Samson. ``Domain theory in logical form." Annals of pure and applied logic 51.1-2 (1991): 1-77.

\bibitem{Ab}
Abramsky, Samson, and Steven Vickers. ``Quantales, observational logic and process semantics." Mathematical structures in computer science 3.2 (1993): 161-227.

\bibitem{AMM}
Akbar Tabatabai, Amirhossein, and Majid Alizadeh and Masoud Memarzadeh. ``On $\nabla$-algebras." Manuscript.

\bibitem{Aliz1}
Alizadeh, Majid, and Mohammad Ardeshir. ``Amalgamation property for the class of basic algebras and some of its natural subclasses." Archive for mathematical logic 45.8 (2006): 913-930.

\bibitem{Aliz2}
Alizadeh, Majid. ``Completions of Basic Algebras." International Workshop on Logic, Language, Information, and Computation. Springer, Berlin, Heidelberg, 2009.

\bibitem{Aliz3}
Alizadeh, Majid, and Mohammad Ardeshir. ``On L\"{o}b algebras." Mathematical Logic Quarterly 52.1 (2006): 95-105.

\bibitem{Aliz4}
Alizadeh, Majid, and Mohammad Ardeshir. ``On L\"{o}b algebras, II." Logic Journal of the IGPL 20.1 (2012): 27-44.

\bibitem{Aliz5}
Alizadeh, Majid, and Mohammad Ardeshir. ``On the linear Lindenbaum algebra of basic propositional logic." Mathematical Logic Quarterly: Mathematical Logic Quarterly 50.1 (2004): 65-70.

\bibitem{Anel}
Anel, Mathieu, and Joyal, Andre, ``Topo-logie." http://mathieu.anel.free.fr/mat/doc/Anel-Joyal-Topo-logie.pdf


\bibitem{Ard}
Ardeshir, Mohammad, and Bardyaa Hesaam. ``An introduction to basic arithmetic." Logic Journal of the IGPL 16.1 (2008): 1-13.

\bibitem{Ard2}
Ardeshir, Mohammad, ``Aspects of Basic Logic." Ph.D. Thesis, Department of Mathematics, Statistics and Computer Science, Marquette University, (1995).  

\bibitem{BasicPropLogic}
Ardeshir, Mohammad, and Wim Ruitenburg. ``Basic propositional calculus I." Mathematical Logic Quarterly 44.3 (1998): 317-343.

\bibitem{Ard3}
Ardeshir, Mohammad, and Wim Ruitenburg. ``Basic propositional calculus II. Interpolation." Archive for Mathematical Logic 40.5 (2001): 349-364.

\bibitem{Lat}
Ardeshir, Mohammad, and Wim Ruitenburg. ``Latarres, Lattices with an Arrow." Studia Logica 106.4 (2018): 757-788.

\bibitem{Bor1}
Borceux, Francis. Handbook of categorical algebra: volume 1, Basic category theory. Vol. 1. Cambridge University Press, 1994.

\bibitem{Bor3}
Borceux, Francis. Handbook of Categorical Algebra: Volume 3, Sheaf Theory. Vol. 3. Cambridge University Press, 1994.

\bibitem{CJ1}
Celani, Sergio, and Ramon Jansana. ``A closer look at some subintuitionistic logics." Notre Dame Journal of Formal Logic 42.4 (2001): 225-255.

\bibitem{CJ2}
Celani, Sergio, and Ramon Jansana. ``Bounded distributive lattices with strict implication." Mathematical Logic Quarterly 51.3 (2005): 219-246.

\bibitem{Corsi}
Corsi, Giovanna. ``Weak logics with strict implication." Mathematical Logic Quarterly 33.5 (1987): 389-406.

\bibitem{Dosen}
Do\v{s}en, Kosta. ``Modal translations in K and D." Diamonds and defaults. Springer, Dordrecht, 1993. 103-127. 

\bibitem{Galatos}
Galatos, Nikolaos, et al. Residuated lattices: an algebraic glimpse at substructural logics. Elsevier, 2007.

\bibitem{Hughes}
Hughes, John. ``Generalising monads to arrows." Science of computer programming 37.1-3 (2000): 67-111.

\bibitem{Iem1}
Iemhoff, Rosalie. ``Preservativity logic: An analogue of interpretability logic for constructive theories." Mathematical Logic Quarterly: Mathematical Logic Quarterly 49.3 (2003): 230-249.

\bibitem{Iem2}
Iemhoff, Rosalie, Dick De Jongh, and Chunlai Zhou. ``Properties of intuitionistic provability and preservativity logics." Logic Journal of the IGPL 13.6 (2005): 615-636.

\bibitem{Jacobs}
Jacobs, Bart, Chris Heunen, and Ichiro Hasuo. ``Categorical semantics for arrows." Journal of functional programming 19.3-4 (2009): 403-438.

\bibitem{StoneSpaces}
Johnstone, Peter T. Stone spaces. Vol. 3. Cambridge university press, 1982.

\bibitem{JoyalTierney}
Joyal, Andre, and Myles Tierney. An extension of the Galois theory of Grothendieck. Vol. 309. American Mathematical Soc., 1984.

\bibitem{DTL}
Kremer, Philip, and Grigori Mints. ``Dynamic topological logic." Handbook of spatial logics. Springer, Dordrecht, 2007. 565-606.

\bibitem{Lin}
Lindley, Sam, Philip Wadler, and Jeremy Yallop. ``Idioms are oblivious, arrows are meticulous, monads are promiscuous." Electronic notes in theoretical computer science 229.5 (2011): 97-117.

\bibitem{LitViss}
Litak, Tadeusz, and Albert Visser. ``Lewis meets Brouwer: constructive strict implication." Indagationes Mathematicae 29.1 (2018): 36-90.

\bibitem{Mac}
Mac Lane, Saunders. ``Categories for the working mathematician. 1998." Graduate Texts in Mathematics 5 (1998).

\bibitem{Mc}
McKinsey, John Charles Chenoweth, and Alfred Tarski. ``The algebra of topology." Annals of mathematics (1944): 141-191.

\bibitem{Moer}
Moerdijk, Ieke. ``A model for intuitionistic non-standard arithmetic." Barendregt, Hendrik Pieter, et al. ``Dirk van Dalen Festschrift." (1993).

\bibitem{Moggi}
Moggi, Eugenio. ``Notions of computation and monads." Information and computation 93.1 (1991): 55-92.

\bibitem{Mul}
Mulvey, Christopher J. ``\&, Suppl." Rend. Circ. Mat. Palermo II 12 (1986): 99-104.

\bibitem{Okada}
Okada, Mitsuhiro. ``A weak intuitionistic propositional logic with purely constructive implication." Studia Logica 46.4 (1987): 371-382.

\bibitem{Pat}
Paterson, Ross. ``Arrows and computation." The Fun of Programming (2003): 201-222.

\bibitem{Restall}
Restall, Greg. ``Subintuitionistic logics." Notre Dame Journal of Formal Logic 35.1 (1994): 116-129.

\bibitem{Substructural}
Restall, Greg. An introduction to substructural logics. Routledge, 2002.

\bibitem{Ros2}
Rosenthal, Kimmo I. Quantales and their applications. Vol. 234. Longman Scientific and Technical, 1990.

\bibitem{Meyer}
Routley, Richard, and Robertk Meyer. ``The semantics of entailment." Studies in Logic and the Foundations of Mathematics. Vol. 68. Elsevier, 1973. 199-243.

\bibitem{Ru}
Ruitenburg, Wim. ``Basic logic and Fregean set theory." Dirk van Dalen Festschrift, Questiones Infinitae 5 (1992): 121-142.

\bibitem{Ru2}
Ruitenburg, Wim. ``Constructive logic and the paradoxes." Modern Logic 1.4 (1991): 271-301.

\bibitem{Sas}
Sasaki, Katsumi. ``Formalizations for the Consequence Relation of Visser's Propositional Logic." Reports on Mathematical Logic 33 (1999): 65-78.

\bibitem{Servi}
Servi, Gisele Fischer. ``On modal logic with an intuitionistic base." Studia Logica 36.3 (1977): 141-149.

\bibitem{Sim}
Simpson, Alex K. ``The proof theory and semantics of intuitionistic modal logic." (1994).

\bibitem{Suz}
Suzuki, Yasuhito. Non-normal propositional languages on transitive frames and their embeddings. Diss. Ph. D. thesis, JAIST, Ishikawa, 1999. 226, 1999.

\bibitem{TopViaLogic}
Vickers, Steven. Topology via logic. Cambridge University Press, 1996.

\bibitem{Vi}
Visser, Albert. ``A propositional logic with explicit fixed points." Studia Logica (1981): 155-175.

\bibitem{Vi2}
Visser, Albert. ``Aspects of Diagonalization and Provability." Ph.D. thesis, University of
Utrecht, Utrecht, (1981).

\bibitem{Vi3}
Visser, Albert. ``Substitutions of $\Sigma_1^0$-sentences: explorations between intuitionistic propositional logic and intuitionistic arithmetic." Annals of Pure and Applied Logic 114.1-3 (2002): 227-271.

\end{thebibliography}
\end{document}